\documentclass[11pt,a4paper]{article}

\usepackage[utf8]{inputenc}
\usepackage[T1]{fontenc}
\usepackage[english]{babel}
\usepackage{lmodern}
\usepackage{microtype}
\usepackage{amsmath,amssymb,amsthm,mathtools}
\usepackage{geometry}
\usepackage{enumitem}
\usepackage{booktabs}
\usepackage{xcolor}
\usepackage{hyperref}
\usepackage{comment}
\hypersetup{colorlinks=true,linkcolor=blue!60!black,citecolor=blue!60!black,urlcolor=blue!60!black}
\setlist[itemize]{leftmargin=2.1em}
\setlist[enumerate]{leftmargin=2.1em}
\numberwithin{equation}{section}
\newtheorem{theorem}{Theorem}[section]
\newtheorem{proposition}[theorem]{Proposition}
\newtheorem{lemma}[theorem]{Lemma}
\newtheorem{corollary}[theorem]{Corollary}
\newtheorem{definition}[theorem]{Definition}
\newtheorem{assumption}[theorem]{Assumption}
\newtheorem{remark}[theorem]{Remark}

\newcommand{\R}{\mathbb R}
\newcommand{\N}{\mathbb N}

\newcommand{\Sscr}{\mathcal S}
\newcommand{\Dscr}{\mathcal D}
\newcommand{\Bsp}{\mathcal B_{\mathrm{sp}}}

\newcommand{\dd}{\,\mathrm d}
\newcommand{\eps}{\varepsilon}
\newcommand{\ang}[1]{\langle #1\rangle}

\newcommand{\one}{\mathbf 1}

\title{Cut-off Jastrow Factors and Spectral Barron Regularity of Coulombic Electronic Wave Functions}
\author{Virginie Ehrlacher}
\date{\today}

\begin{document}
\maketitle


\begin{abstract}
We study the spectral Barron regularity of Coulombic electronic
eigenfunctions after extraction of a cut-off Jastrow factor. Let
\(H=-\Delta+V\) be an \(N\)-electron Coulomb Hamiltonian with clamped
nuclei, and let \(\psi\) be an eigenfunction associated with a discrete
eigenvalue below the bottom of the essential spectrum. For the cut-off
Jastrow factor \(F_{\rm cut}\) of
Fournais--Hoffmann-Ostenhof--Hoffmann-Ostenhof--S\o rensen, we set
\[
\phi=e^{-F_{\rm cut}}\psi .
\]
Whereas the original wave function satisfies the sharp global threshold
\(\psi\in \Bsp^s(\mathbb R^{3N})\) for every \(0\leq s<1\), we prove
that the Jastrow quotient gains one full order:
\[
\phi\in \Bsp^s(\mathbb R^{3N})
\qquad \text{for every } 0\le s<2 .
\]
The endpoint \(s=2\) is shown to be natural through an explicit
hydrogen-like eigenfunction. The many-body proof is a global
Fourier-side resolvent argument. After conjugation by the cut-off
Jastrow factor, the Coulomb singularities are converted into localized
angular coefficient blocks with admissible Fourier-control measures.
Low frequencies are controlled by the a priori \(H^1\)-bound, while
high frequencies are recovered by a Neumann fixed-point argument using
the resolvent multiplier and annular estimates for the coefficient
measures.
\end{abstract}

\tableofcontents

\section{Introduction}

\subsection{Motivation}

High-dimensional electronic structure is governed by Coulombic singularities.  In the Born--Oppenheimer approximation, an $N$-electron bound state (with $N\in \N^*$) is an eigenfunction of the operator
\begin{equation}\label{eq:intro-H}
        H=-\sum_{i=1}^N\Delta_{x^i}+V(x),
        \qquad x=(x^1,\ldots,x^N)\in\R^{3N},
\end{equation}
where for all $x=(x^1, \ldots, x^N)\in \mathbb{R}^{3N}$,
\begin{equation}\label{eq:intro-V}
        V(x)=-\sum_{i=1}^N\sum_{\ell=1}^L\frac{Z^\ell}{|x^i-R^\ell|}
        +\sum_{1\le i<j\le N}\frac1{|x^i-x^j|},
\end{equation}
where $L\in \N^*$ is the number of nuclei in a given molecule, the state of which is described by their positions $R^1, \ldots, R^L\in \mathbb{R}^3$ and their electric charges $Z^1, \ldots, Z^L >0$. Under appropriate assumptions, $H$ is a bounded from below self-adjoint operator on $L^2(\mathbb{R}^{3N})$ with domain $H^2(\mathbb{R}^{3N})$. The singularities of the potential $V$ create cusp singularities in $\psi$ at electron--nucleus and electron--electron coalescences.  These cusps are mild enough for Sobolev regularity, but they are sufficiently strong to limit Fourier $L^1$-type regularity and, consequently, Barron-space approximation properties.

The main question addressed here is whether the singular part can be removed by an explicit Jastrow factor in such a way that the remaining factor belongs to a higher spectral Barron space.  We use the cut-off Jastrow factor introduced in the sharp regularity analysis of Fournais--Hoffmann-Ostenhof--Hoffmann-Ostenhof--S\o rensen~\cite{FHOHS2005}.  The quotient
\[
        \phi=e^{-F_{\rm cut}}\psi
\]
keeps the physical cusp correction near the coalescence sets but avoids the bad behaviour at infinity caused by the non-cut-off factor.

Our main result is that the cut-off quotient satisfies
\begin{equation}\label{eq:intro-main}
        \phi\in\Bsp^s(\R^{3N})\qquad\forall s<2, 
\end{equation}
where $\Bsp^s(\R^{3N})$ denotes the spectral Barron space of order $s$ on $\R^{3N}$. This should be compared with the sharp result for the unregularized wave
function, namely \(\psi\in \Bsp^s(\R^{3N})\) for every \(s<1\) in general~\cite{Yserentant2026}.
Thus the cut-off Jastrow factor yields one full additional order in the
spectral Barron scale.

\subsection{State of the art}

The analysis of Coulombic wave functions has a long history. Kato's cusp
conditions \cite{Kato1957} identify the first-order radial behaviour at
particle coalescences, both for electron--nucleus and electron--electron
coalescence sets. A much sharper structural result was obtained by Fournais,
Hoffmann-Ostenhof, Hoffmann-Ostenhof and Sørensen, who proved that
many-electron wave functions admit a factorization by an explicit universal
factor, independent of the eigenvalue and of the particular eigenfunction,
such that the remaining factor has locally bounded second derivatives
\cite{FHOHS2005}. The same circle of ideas led to analytic representations
near simple coalescence points, of the form
\(\psi=\psi^{(1)}+|y|\psi^{(2)}\), with analytic factors when no other
collision is present \cite{FHOHS2009}.

Explicitly correlated and transcorrelated formulations exploit the same mechanism from a numerical perspective.  Flad, Hackbusch and Schneider studied best $N$-term approximations for electronic wave functions with Jastrow-type correlation factors \cite{FladHackbuschSchneider2007}.  Yserentant proved that multiplying electronic wave functions by suitable explicit correlation factors improves mixed regularity, leading to much more favourable sparse approximation estimates \cite{Yserentant2011}.  Bachmayr investigated a transcorrelated two-electron formulation and the associated hyperbolic wavelet discretization, showing that the explicitly correlated equation can improve the mixed derivative structure relevant for sparse tensor product approximation \cite{Bachmayr2012}.

In parallel, Barron spaces have become a central functional framework for
understanding dimension-robust neural network approximation. Barron's original
theorem bounds approximation by two-layer neural networks in terms of a
Fourier first moment \cite{Barron1993}. More recent works identify Barron
spaces as natural approximation spaces for two-layer neural networks and
related flow-induced models \cite{EMaWu2022}. Spectral Barron spaces,
defined by weighted \(L^1\)-norms of the Fourier transform, are particularly
well adapted to PDE regularity and Fourier-feature approximation. Chen, Lu,
Lu and Zhou developed a regularity theory for whole-space Schrödinger
equations in spectral Barron spaces \cite{ChenLuLuZhou2023}. Liao and
Ming studied approximation of spectral Barron functions by deep neural
networks, including dimension-free rates in suitable regimes
\cite{LiaoMing2025}. Recent work of Choulli, Lu and Takase develops further
functional analysis in spectral Barron spaces
\cite{ChoulliLuTakase2026}.


For Coulombic electronic wave functions themselves, Yserentant proved the sharp global result
\begin{equation}\label{eq:yserentant-intro}
        \psi\in\Bsp^s(\R^{3N})\qquad\forall s<1,
\end{equation}
for eigenvalues below the essential spectrum, and showed that the threshold cannot be improved in general, already for the hydrogen ground state \cite{Yserentant2026}.  Ming and Yu subsequently developed a Barron regularity theory for many-particle Schr\"odinger eigenfunctions under Fourier--Lebesgue assumptions on the one-particle and pairwise potentials, including singular inverse-power potentials \cite{MingYu2025}.  These results are close in spirit to the Fourier estimates used below. The new feature
of the present work is that the equation is first conjugated by a cut-off Jastrow factor.
After this conjugation, the Coulomb singularity \(1/|y|\), whose Fourier transform behaves
like \(|k|^{-2}\) in a three-dimensional collision variable \(y\), is replaced in the critical
first-order coefficients by localized angular blocks of the form
\[
  \chi(|y|)\frac{y}{|y|},
\]
whose Fourier transforms decay like \(|k|^{-3}\). This extra decay, combined with the
two powers supplied by the resolvent multiplier \((|\xi|^2+\rho)^{-1}\), is the analytic
source of the additional spectral Barron order. The global many-body argument is a Fourier-side resolvent and fixed-point argument. The
low-frequency part of \(\widehat\phi\) is controlled by the a priori
\(H^1\)-bound, while the high-frequency part is recovered as the unique fixed point of a
contraction in weighted Fourier \(L^1\) spaces. The admissible Fourier-control measures
are used to make this contraction stable under the linear collision-variable maps and
under the products appearing in \(|\nabla F_{\rm cut}|^2\).

\subsection{Contributions}

The contributions of this manuscript are the following.
\begin{enumerate}
  \item We prove that for the hydrogen ground state, the cut-off Jastrow quotient is a
  Schwartz function. Hence it belongs to \(\Bsp^s(\mathbb R^3)\) for every
  \(s\geq 0\).

  \item We prove that for the hydrogen-like eigenfunction
  \(\psi(x)=x_1 e^{-Z|x|/4}\), the cut-off quotient belongs to
  \(\Bsp^s(\mathbb R^3)\) for every \(0\leq s<2\), and not to
  \(\Bsp^2(\mathbb R^3)\). This gives the model obstruction at the endpoint.

  \item We prove the general many-body theorem by a global Fourier-side resolvent
  argument. The key analytic point is that, after extraction of the cut-off Jastrow
  factor, the critical first-order coefficients are localized angular blocks with
  borderline Fourier decay \(|k|^{-3}\) in three-dimensional collision variables.
  Together with the resolvent multiplier, this yields a high-frequency contraction in
  weighted Fourier \(L^1\) spaces.

\end{enumerate}

\section{Preliminaries}

We introduce some notation and preliminaries in this section. Section~\ref{sec:2.1} is devoted to the definition of the spectral Barron spaces and their associated norms. Section~\ref{sec:2.2} presents the cut-off Jastrow factor we consider in the present work. In all the rest of the paper, $C$ will denote some constant the value of which may change along the calculations.

\subsection{Fourier convention and spectral Barron spaces}\label{sec:2.1}

Let $d\in \mathbb{N}^*$ be an arbitrary positive integer. For any complex-valued function $u\in L^2(\R^d)$, we denote by
\begin{equation}\label{eq:fourier-convention}
        \widehat u(\xi)=\int_{\R^d}e^{-ix\cdot\xi}u(x)\dd x,
        \qquad \mbox{ for a.e. }\xi \in \R^d,
\end{equation}
so that 
$$
u(x)=(2\pi)^{-d}\int_{\R^d}e^{ix\cdot\xi}\widehat u(\xi)\dd\xi, \qquad \mbox{ for a.e. }x\in \R^d.
$$
We denote by $\mathcal S(\R^d)$ the set of Schwartz complex-valued functions defined on $\R^d$. 
For \(u\in \mathcal S'(\mathbb R^d)\), we define \(\widehat u\in\mathcal S'(\mathbb R^d)\) by
\[
  \langle \widehat u,\varphi\rangle_{\mathcal S',\mathcal S}
  =
  \langle u,\widehat\varphi\rangle_{\mathcal S',\mathcal S},
  \qquad \forall \varphi\in\mathcal S(\mathbb R^d).
\]
We also denote by $\mathcal F(u) = \widehat u$ and by $\mathcal F^{-1}$ the inverse Fourier transform.

\medskip

For any $\xi \in \R^d$, we also denote by $\ang\xi=(1+|\xi|^2)^{1/2}$.

\begin{definition}[Spectral Barron space]
For $s\ge0$, the spectral Barron space on $\R^d$ is defined as the set
\[
        \Bsp^s(\R^d)=\left\{u\in\Sscr'(\R^d):\widehat u\in L^1_{\rm loc}(\R^d),\quad
        \|u\|_{\Bsp^s}<\infty\right\},
\]
where
\[
        \|u\|_{\Bsp^s}=\int_{\R^d}\ang\xi^s|\widehat u(\xi)|\dd\xi.
\]
\end{definition}
It can be easily checked that $\Bsp^s(\R^d)$ is a Banach space for any $s\geq 0$. 

\medskip

We begin with a useful preliminary lemma which concerns the Barron regularity of Schwartz functions.

\begin{lemma}\label{lem:schwartz-barron}
If $u\in\Sscr(\R^d)$, then $u\in\Bsp^s(\R^d)$ for every $s\ge0$.
\end{lemma}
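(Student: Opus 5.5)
The plan is to use the fact that the Fourier transform maps $\Sscr(\R^d)$ onto itself, so that $\widehat u$ decays faster than any polynomial. Once that is in hand, the weighted $L^1$-norm defining $\|u\|_{\Bsp^s}$ is finite by comparison with an integrable power of $\ang\xi$.

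Concretely, let $u\in\Sscr(\R^d)$ and fix $s\ge0$. First, $\widehat u$ is a genuine function given by the absolutely convergent integral \eqref{eq:fourier-convention}. It coincides with the distributional Fourier transform, and it lies in $\Sscr(\R^d)$. In particular $\widehat u$ is continuous, hence $\widehat u\in L^1_{\rm loc}(\R^d)$, which is the first requirement in the definition of $\Bsp^s(\R^d)$.

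Second, I choose an integer $m$ with $2m> s+d$. Since $\widehat u\in\Sscr(\R^d)$, the quantity
\[
  C_m:=\sup_{\xi\in\R^d}\ang\xi^{2m}|\widehat u(\xi)|
\]
is finite. To see this, note that $\ang\xi^{2m}=(1+|\xi|^2)^m$ is a polynomial in $\xi$, so $C_m$ is bounded by a finite sum of Schwartz seminorms of $\widehat u$. Equivalently, $\ang\xi^{2m}\widehat u=\mathcal F\big((1-\Delta)^m u\big)$, and the right-hand side is bounded by $\|(1-\Delta)^m u\|_{L^1}<\infty$.

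Then
\[
  \|u\|_{\Bsp^s}=\int_{\R^d}\ang\xi^{s}|\widehat u(\xi)|\dd\xi\le C_m\int_{\R^d}\ang\xi^{s-2m}\dd\xi<\infty .
\]
The last integral converges because $2m-s>d$: in polar coordinates it is bounded by a constant times $\int_0^\infty (1+r)^{s-2m}r^{d-1}\dd r$.

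There is no real obstacle here. The only point deserving a line of justification is the bound on $C_m$, and it is cleanest to obtain it from the identity $\ang\xi^{2m}\widehat u=\widehat{(1-\Delta)^m u}$ together with the trivial estimate $\|\widehat v\|_{L^\infty}\le\|v\|_{L^1}$. This route avoids invoking the full statement that $\mathcal F$ preserves $\Sscr$, since it only uses that $(1-\Delta)^m u\in\Sscr\subset L^1$.
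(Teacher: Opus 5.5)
Your proof is correct and follows essentially the same route as the paper: rapid decay of $\widehat u$, which the paper takes from the fact that $\mathcal F$ is an automorphism of $\Sscr(\R^d)$ and you derive from $\ang\xi^{2m}\widehat u=\widehat{(1-\Delta)^m u}$ and $\|\widehat v\|_{L^\infty}\le\|v\|_{L^1}$, followed by integrability of $\ang\xi^{s-M}$. Your condition $2m>s+d$ is the sharp threshold for that integrability, whereas the paper's choice $M>d+s+1$ is simply a safe sufficient one.
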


\begin{proof}
Since the Fourier transform is an automorphism of $\Sscr(\R^d)$, for every $M>0$ there exists a constant $C_M>0$ such that $|\widehat u(\xi)|\le C_M\ang\xi^{-M}$. 
Choosing \(M>d+s+1\) gives the integrability of \(\langle \xi\rangle^s|\widehat u(\xi)|\), and hence the desired result.
\end{proof}

\subsection{The cut-off Jastrow factor}\label{sec:2.2}

We detail here the cut-off Jastrow factor we consider in this work.

We consider the Hamiltonian $H$ defined in \eqref{eq:intro-H}--\eqref{eq:intro-V}. The operator $H$ is a self-adjoint bounded from below operator with domain $H^2(\R^{3N})$. Let $\chi\in C^\infty_c([0,\infty))$ satisfying
$0\leq \chi\leq 1$, $\chi(r)=1$ for all $0\leq r\leq 1$, and
$\chi(r)=0$ for all $r\geq 2$. 
  The two-body part of the Jastrow factor is given as follows: for all $x=(x^1, \ldots, x^N)\in \R^{3N}$, 
\begin{equation}\label{eq:F2cut}
        F_{2,{\rm cut}}(x)=
        -\frac12\sum_{i=1}^N\sum_{\ell=1}^L Z^\ell \chi(|x^i-R^\ell|)|x^i-R^\ell|
        +\frac14\sum_{1\le i<j\le N}\chi(|x^i-x^j|)|x^i-x^j|.
\end{equation}
Set $y^{i,\ell}=x^i-R^\ell$.  The logarithmic three-body term of Fournais--Hoffmann-Ostenhof--Hoffmann-Ostenhof--S\o rensen ~\cite{FHOHS2005} is
\begin{equation}\label{eq:F3cut}
\begin{aligned}
        F_{3,{\rm cut}}(x)
        ={}& C_0\sum_{\ell=1}^L\sum_{1\le i<j\le N}
        Z^\ell\,\chi(|y^{i,\ell}|)\chi(|y^{j,\ell}|)
        \,(y^{i,\ell}\cdot y^{j,\ell}) \\
        &\hspace{7em}\times
        \log\bigl(|y^{i,\ell}|^2+|y^{j,\ell}|^2\bigr),
\end{aligned}
\end{equation}
with $C_0=(2-\pi)/(12\pi)$.  We write
\begin{equation}\label{eq:Fcut}
        F_{\rm cut}=F_{2,{\rm cut}}+ \gamma F_{3,{\rm cut}},
        \qquad
        \phi=e^{-F_{\rm cut}}\psi,
\end{equation}
with $\gamma = 0 \mbox{ or } 1$ and where $\psi$ is an eigenfunction of the operator $H$.

The logarithmic term \(F_{3,\mathrm{cut}}\) is the cut-off version of the
three-body correction in the factorization theorem; see in particular
\cite[(1.10), (1.17)]{FHOHS2005}.

\medskip

The
Fourier Neumann argument below applies, more generally, to any bounded cut-off factor \(F\)
for which \(\nabla F\) and the scalar combination
\[
  \Delta F + |\nabla F|^2 - V
\]
are bounded and have Fourier transforms dominated by admissible
Fourier-control measures in the sense of Definition~\ref{def:admissible-measure}.

The terms in \eqref{eq:F2cut} and \eqref{eq:F3cut} are compactly supported in their collision variables, but not in the full set of variables $x\in\R^{3N}$.  Their Fourier transforms are therefore naturally measures supported on subspaces of the dual space.

\begin{remark}
The non-cut-off Jastrow factor is not suitable for a global Barron statement on $\R^{3N}$.  For the hydrogen ground state, the non-cut-off quotient is the constant function, which does not lie in $\Bsp^0(\R^3)$.  The cut-off factor removes the cusp near the origin while preserving exponential decay at infinity.
\end{remark}

\section{Two one-electron model results}\label{sec:3}

Section~\ref{sec:3} is devoted to the analysis of the two one-electron models we consider here, which both concern the electronic Hamiltonian associated to the hydrogen nucleus. 

More precisely, let
\[
        H_Z=-\Delta-\frac Z{|x|}, \qquad x\in \mathbb{R}^3, 
\]
be the bounded from below self-adjoint operator on $L^2(\mathbb{R}^3)$ with domain $H^2(\mathbb{R}^3)$. Then, the spectrum of $H_Z$ is 
$$
\sigma(H_Z) = \left\{ - \frac{Z^2}{4n^2}: \; n\in \mathbb{N}^* \right\} \cup [0, +\infty).
$$
More precisely, the essential spectrum of $H_Z$ is $\sigma_{\rm ess} = [0, +\infty)$, and each discrete eigenvalue $E_n =  - \frac{Z^2}{4n^2}$ for $n\in \mathbb{N}^*$ has multiplicity exactly equal to $n^2$. 

\medskip

In Section~\ref{sec:3.1}, we study the case of the hydrogen ground state and consider $\psi_1$ the unique (up to a multiplicative constant) eigenfunction of $H_Z$ associated to $E_1$. We prove that the associated Jastrow quotient $\phi_1$ belongs to the Schwartz space, and hence to $\Bsp^s(\R^3)$ for any $s\geq 0$.  In Section~\ref{sec:3.2}, we consider an eigenfunction $\psi_2$ of $H_Z$ associated with the eigenvalue $E_2$. We then prove that the associated Jastrow quotient $\phi_2$ belongs to $\Bsp^s(\R^3)$ for all $0\leq s <2$ but does not belong to $\Bsp^2(\R^3)$ illustrating why the value $s=2$ is an endpoint.

\subsection{The hydrogen ground state}\label{sec:3.1}

Set $a=Z/2$.  The ground state, up to a multiplicative constant and normalization, is given by
\[
        \psi_1(x)=e^{-a|x|}, \qquad x \in \R^3. 
\]
Indeed, it is easy to check that $H_Z\psi_1=-a^2\psi_1 = E_1\psi_1$.

\medskip

For one-electron systems, $N=1$, $F_{3,{\rm cut}}=0$ and
\[
        F_{\rm cut}(x)=-a\chi(|x|)|x|, \qquad x\in \R^3.
\]
Thus,
\begin{equation}\label{eq:phi1cut}
        \phi_{1}(x)=e^{-F_{\rm cut}(x)}\psi_1(x)
        =e^{-a(1-\chi(|x|))|x|}, \qquad x\in \R^3. 
\end{equation}

We then prove the following theorem. 

\begin{theorem}[Hydrogen ground state]\label{thm:hydrogen}
The function $\phi_{1}$ belongs to $\Bsp^s(\R^3)$ for all $s\geq 0$.
Actually, $\phi_{1}\in\Sscr(\R^3)$.
\end{theorem}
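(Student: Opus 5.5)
The plan is to show directly that $\phi_1\in\Sscr(\R^3)$; membership in $\Bsp^s(\R^3)$ for every $s\ge0$ then follows immediately from Lemma~\ref{lem:schwartz-barron}. By \eqref{eq:phi1cut}, $\phi_1(x)=e^{-g(x)}$ with $g(x)=a(1-\chi(|x|))|x|$. So it suffices to prove two things: that $g$ is smooth on $\R^3$, and that every derivative of $e^{-g}$ decays faster than any polynomial.

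\textbf{Smoothness.} Since $\chi(r)=1$ for $0\le r\le 1$, the function $1-\chi(|x|)$ vanishes on the ball $\{|x|<1\}$. Hence $g\equiv 0$ there, and in particular $g$ is smooth near the origin, where $|x|$ fails to be smooth. On the open set $\{|x|>1/2\}$, both $x\mapsto|x|$ and $x\mapsto\chi(|x|)$ are $C^\infty$, since $\chi$ is smooth and $|x|$ is smooth away from $0$. The two open sets cover $\R^3$, so $g\in C^\infty(\R^3)$ and therefore $\phi_1\in C^\infty(\R^3)$.

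\textbf{Decay.} For $|x|\ge 2$ we have $\chi(|x|)=0$, so $\phi_1(x)=e^{-a|x|}$ there. In that region, Fa\`a di Bruno's formula (or induction on the order) expresses $\partial^\alpha e^{-a|x|}$ as $e^{-a|x|}$ times a polynomial in derivatives of $|x|$. Every derivative $\partial^\beta|x|$ with $|\beta|\ge1$ is homogeneous of degree $1-|\beta|\le 0$, hence bounded on $\{|x|\ge2\}$. This gives, for each multi-index $\alpha$,
\[
|\partial^\alpha\phi_1(x)|\le C_\alpha e^{-a|x|},\qquad |x|\ge2 .
\]
On the compact ball $\{|x|\le2\}$, $\partial^\alpha\phi_1$ is continuous and therefore bounded. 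Combining the two regions, $\sup_x\ang{x}^M|\partial^\alpha\phi_1(x)|<\infty$ for all $M$ and $\alpha$, which is exactly $\phi_1\in\Sscr(\R^3)$.

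There is no real obstacle. The only point needing care is the smoothness of $|x|$ at the origin, and the cut-off handles it, since it makes $g$ vanish identically near $0$. It is also worth recording that $a=Z/2>0$, which is what produces exponential, hence faster-than-polynomial, decay.
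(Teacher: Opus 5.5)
Your proof is correct and follows essentially the same route as the paper: the cut-off makes the exponent vanish near the origin and equal $a|x|$ outside $B(0,2)$, which gives smoothness and exponential decay of all derivatives, hence $\phi_1\in\Sscr(\R^3)$, and Lemma~\ref{lem:schwartz-barron} then gives the Barron statement. Your homogeneity argument for $\partial^\beta|x|$ makes explicit the derivative bound that the paper states in one line.
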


\begin{proof}
Let $g(x)=(1-\chi(|x|))|x|$ for $x\in \R^3$.  Since $\chi=1$ on $B(0,1)$, $g=0$ on $B(0,1)$. Moreover, since $\chi(x)=0$ for all $x\in \R^3$ such that $|x|\ge2$, $g(x)=|x|$ as soon as $|x|\ge2$.  Therefore $e^{-ag}$ is $C^\infty$ on $\R^3$, equals $1$ near the origin, and equals $e^{-a|x|}$ outside $B(0,2)$.  Every derivative is a finite sum of terms bounded by a polynomial times $e^{-a|x|}$ outside a compact set.  Hence
\[
        \sup_{x\in\R^3}\ang x^M|\partial^\beta \phi_{1}(x)|<\infty
\]
for all $M\in \mathbb{N}$ and all multi-indices $\beta \in \mathbb{N}^3$.  Thus $\phi_{1}\in\Sscr(\R^3)$, and Lemma \ref{lem:schwartz-barron} proves the claim.
\end{proof}

\subsection{A hydrogen-like state and the endpoint $s=2$}\label{sec:3.2}

Consider now the function
\[
        \psi_2(x)=x_1e^{-b|x|},\qquad x = (x_1, x_2, x_3)\in \mathbb{R}^3,
\]
with $b=Z/4$. A direct computation gives
\[
        H_Z \psi_2 = \left(-\Delta-\frac Z{|x|}\right)\psi_2=-\frac{Z^2}{16}\psi_2 = E_2 \psi_2.
\]
Thus, $\psi_2$ is an eigenfunction associated to $E_2$. With the same cut-off factor as above, and $a=Z/2$, we have
\begin{equation}\label{eq:phi2cut}
        \phi_{2}(x)=x_1e^{-b|x|+a\chi(|x|)|x|}, \qquad x\in \R^3.
\end{equation}
For all $x\in B(0,1)$, it holds that
\[
        \phi_{2}(x)=x_1e^{(a-b)|x|}=x_1e^{(Z/4)|x|}.
\]
In the local Taylor expansion of $\psi_2$ close to the origin, the first non-smooth term is the non-zero multiple
\[
  \frac Z4\,x_1|x|.
\]
Equivalently, the local expansion contains a term of the form
\(x_1|x|A(|x|)\) with \(A(0)=Z/4\neq 0\). Our aim is to prove that $\phi_2 \in \Bsp^s(\R^3)$ for all $0\leq s < 2$ but $\phi_2 \notin \Bsp^2(\R^3)$. To this aim, we first prove the following auxiliary lemma. 

\begin{lemma}[The model singularity $x_j|x|$]\label{lem:xjr}
Let $A\in C_c^\infty([0,\infty))$ and set
\[
        f_j(x)=x_j|x|A(|x|),\qquad x\in\R^3, \; j=1,2,3.
\]
Then, $f_j\in\Bsp^s(\R^3)$ for every $s<2$.  If $A(0)\ne0$, then $f_j\notin\Bsp^2(\R^3)$.
\end{lemma}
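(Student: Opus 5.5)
The plan is to reduce the statement to pointwise bounds on $\widehat f_j$. The goal is to show that $\widehat f_j$ is smooth, with $|\widehat f_j(\xi)|\le C\ang\xi^{-5}$, and that when $A(0)\ne0$ it has the exact leading asymptotic
\[
  \widehat f_j(\xi)= c\,A(0)\,\frac{\xi_j}{|\xi|^{6}}+O(|\xi|^{-6})\qquad (|\xi|\to\infty),
\]
with $c\neq 0$ a universal constant. Granting this, the conclusions follow from polar coordinates in $\R^3$. First, $\int\ang\xi^s\ang\xi^{-5}\dd\xi<\infty$ exactly when $s-5+2<-1$, that is, for $s<2$. Second, for $s=2$ the leading term contributes $\int_{|\xi|>R}|\xi|^2|\xi_j||\xi|^{-6}\dd\xi=\int_R^\infty r^{-1}\dd r\cdot\int_{S^2}|\omega_j|\dd\omega=\infty$. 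The remainder is integrable against $\ang\xi^2$, since $r^{2}r^{-6}r^2=r^{-2}$. Hence $f_j\notin\Bsp^2$.

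\textbf{Step 1: decomposition.} Write $A(r)=A(0)\chi_0(r)+r\,B(r)$ with $B$ smooth, where $\chi_0$ is a fixed cut-off equal to $1$ near $0$. This is not quite exact, so more precisely set $A(r)-A(0)\chi_0(r)=rB(r)$ with $B\in C_c^\infty([0,\infty))$, which is possible because $A(r)-A(0)\chi_0(r)$ vanishes at $0$ and is smooth in $r$. Then
\[
  f_j=A(0)\,x_j|x|\chi_0(|x|)+x_j|x|^2B(|x|).
\]
The term $x_j|x|^2=x_j(x_1^2+x_2^2+x_3^2)$ is a polynomial. Expanding $B(r)=B(0)+rB_1(r)$ and iterating shows that the second piece is the sum of a $C^2$ compactly supported function whose third-order singular part is $x_j|x|^3\times$smooth, plus Schwartz pieces. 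Its Fourier transform is therefore $O(|\xi|^{-7})$, by the same homogeneity computation as in Step 2 applied to $x_j|x|^3$, which is homogeneous of degree $4$. So it suffices to treat $g_j=x_j|x|\chi_0(|x|)$.

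\textbf{Step 2: the model term via homogeneity.} The function $x_j|x|$ is homogeneous of degree $2$ and smooth away from $0$, and it is not a polynomial. Its distributional Fourier transform is therefore homogeneous of degree $-5$ and smooth away from $0$. Concretely, $\mathcal F(|x|)=-8\pi|\xi|^{-4}$ on $\R^3\setminus\{0\}$, and $x_j\mapsto i\partial_{\xi_j}$ gives $\mathcal F(x_j|x|)=i\partial_{\xi_j}(-8\pi|\xi|^{-4})=32\pi i\,\xi_j|\xi|^{-6}$ away from the origin, which is nonzero.

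Write $g_j=x_j|x|-x_j|x|(1-\chi_0)$. The second term is smooth except for polynomial growth, and its Fourier transform equals that of a function that is $C^\infty$ with symbol-type derivative bounds. This Fourier transform is therefore rapidly decreasing away from $\xi=0$. For $|\xi|\ge1$ this gives $\widehat g_j(\xi)=32\pi i\,\xi_j|\xi|^{-6}+O(|\xi|^{-\infty})$. Since $g_j\in L^1$ is compactly supported, $\widehat g_j$ is also bounded and continuous near $0$. This yields both the bound $\ang\xi^{-5}$ and the leading asymptotic with $c=32\pi i$.

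\textbf{Main obstacle.} The delicate point is making rigorous the claim that the Fourier transform of $x_j|x|(1-\chi_0)$ decays rapidly away from the origin. I would handle it by integrating by parts with $|\xi|^{-2}\Delta$ against $e^{-ix\cdot\xi}$ for $|\xi|\ge1$, in the distributional sense. Each application of $\Delta$ lowers the growth order of $x_j|x|(1-\chi_0)$ by $2$, and the derivatives of order greater than $5$ are integrable. Equivalently, one can simply check directly that $\Delta^3 g_j$ has an integrable singularity of order $|x|^{-3}\cdot$... As an alternative I would prefer to avoid, one can compute $\widehat g_j$ via $\Delta^2(x_j|x|)=c'\,\partial_j\delta$ type identities. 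Whichever route is used, the remainder estimate in Step 1 is routine by comparison.
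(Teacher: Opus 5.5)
Your proof is correct, but it takes a different route from the paper's. The paper never splits off the model singularity. It applies the three-dimensional radial Fourier formula to $Q(x)=|x|A(|x|)$, so that $\widehat Q(\xi)=4\pi\varrho^{-1}\int_0^\infty r^2A(r)\sin(\varrho r)\,dr$ with $\varrho=|\xi|$. It then obtains the asymptotics of this sine integral, and of its $\varrho$-derivative, by repeated one-dimensional integrations by parts; the boundary term $h''(0)=2A(0)$, with $h(r)=r^2A(r)$, produces the leading $\varrho^{-3}$. It concludes with $\widehat{f_1}=i\partial_{\xi_1}\widehat Q=32\pi i\,A(0)\,\xi_1|\xi|^{-6}+O(|\xi|^{-6})$, the same constant you find. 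That argument is fully elementary and treats the whole profile $A$ at once, at the cost of a second asymptotic computation for the derivative of the sine integral. Your decomposition $A=A(0)\chi_0+rB$ instead isolates the mechanism. The endpoint is dictated only by the homogeneous, non-polynomial degree-two term $A(0)x_j|x|$, whose transform you compute exactly, while the cut-off error and the Taylor remainder $x_j|x|^2B(|x|)$ are strictly better behaved. This adapts to non-radial homogeneous singularities, but it imports the Fourier theory of homogeneous distributions and leaves the remainder only sketched.

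Two points in your write-up should be corrected, although neither affects the main line.

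First, in Step 1, $B_1(|x|)$ is in general not smooth at the origin, so ``$x_j|x|^3\times$smooth'' is inaccurate, and the iteration would have to be stopped with an explicit remainder. It is simpler to skip the iteration. Near $0$, the derivatives of $x_j|x|^2B(|x|)$ of order at most $4$ are bounded, and those of order $5$ and $6$ are $O(|x|^{-1})$ and $O(|x|^{-2})$. These classical derivatives coincide with the distributional ones, because no boundary terms survive at the origin. They are therefore integrable, so the transform is $O(\ang\xi^{-6})$. This already suffices, since $r^2\cdot r^{-6}\cdot r^2$ is integrable at infinity.

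Second, in your ``main obstacle'' paragraph, $\Delta^3 g_j$ is not an integrable function: $|x|^{-3}$ is not integrable in $\R^3$. The Dirac identity also appears one power of $\Delta$ later than you wrote: $\Delta(x_j|x|)=4x_j|x|^{-1}$, $\Delta^2(x_j|x|)=-8x_j|x|^{-3}=8\partial_j|x|^{-1}$, hence $\Delta^3(x_j|x|)=-32\pi\,\partial_j\delta_0$. Since $\chi_0=1$ near $0$, this gives $\Delta^3 g_j=-32\pi\,\partial_j\delta_0+\varphi$ with $\varphi\in C_c^\infty(\R^3)$, that is, $|\xi|^6\widehat{g_j}(\xi)=32\pi i\,\xi_j-\widehat\varphi(\xi)$. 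This is the shortest rigorous proof of your Step 2, and it makes the quoted formula $\mathcal F(|x|)=-8\pi|\xi|^{-4}$ unnecessary.
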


\begin{proof}
It suffices to prove the desired result for $j=1$.  For all $x=(x_1,x_2,x_3)\in \R^3$, write $f_1(x)=x_1Q(x)$ with $Q(x)=|x|A(|x|)$. 
For all $\xi \in \R^3$, let $G(\xi)=\widehat Q(\xi)$. It is then easy to check that for all $\xi\in \R^3$, $G(\xi)=G_0(\varrho)$, where
\(\varrho=|\xi|\). The radial Fourier formula in dimension three then gives
\begin{equation}\label{eq:radial}
        G_0(\varrho)=c_F\varrho^{-1}I(\varrho),
        \qquad
        I(\varrho)=\int_0^\infty h(r)\sin(\varrho r)\dd r,
        \qquad h(r)=r^2A(r), \qquad r\geq 0,\end{equation}
        with $c_F = 4 \pi$. Moreover,  $h(0)=h'(0)=0$ and $h\in C_c^\infty([0,\infty))$. 
        After two integrations by parts, we obtain
\[
  I(\varrho)
  =
  -\frac{1}{\varrho^2}
  \int_0^\infty h''(r)\sin(\varrho r)\,dr .
\]
A further integration by parts gives
\[
  \int_0^\infty h''(r)\sin(\varrho r)\,dr
  =
  \frac{h''(0)}{\varrho}+O(\varrho^{-2})
  \qquad \mbox{ as }\varrho\to+\infty.
\]
Therefore
\[
  I(\varrho)
  =
  -h''(0)\varrho^{-3}+O(\varrho^{-4})
  =
  -2A(0)\varrho^{-3}+O(\varrho^{-4}) \qquad \mbox{ as }\varrho\to+\infty.
\]
Similar calculations yield
$$
I'(\varrho)=6A(0)\varrho^{-4}+O(\varrho^{-5}) \qquad \mbox{ as } \varrho \to +\infty.
$$
Thus
\[
        G_0'(\varrho)=8c_FA(0)\varrho^{-5}+O(\varrho^{-6}),
\]
and therefore, since $\widehat{x_1Q}=i\partial_{\xi_1}G$, we obtain
\[
        \widehat f_1(\xi)=8ic_FA(0)\frac{\xi_1}{|\xi|^6}+O(|\xi|^{-6})\qquad \mbox{ as } |\xi| \to +\infty.
\]
On the one hand, this implies
\[
        |\widehat f_1(\xi)|\le C|\xi|^{-5},\qquad \mbox{for all }\xi\in \R^3 \mbox{ s.t. } |\xi|\ge1.
\]
Moreover, the function $\widehat f_1\in L^\infty(\R^3)\cap C^0(\R^3)$ since $f_1\in L^1(\R^3)$. Hence, for all $0\leq s < 2$,
\[
        \int_{\R^3}\ang\xi^s|\widehat f_1(\xi)|\dd\xi
        \le C+C\int_1^\infty r^s r^{-5}r^2\dd r
        =C+C\int_1^\infty r^{s-3}\dd r,
\]
which is finite since $s<2$.

\medskip

Assume now that $A(0)\ne0$. On the cone $\Gamma=\{\xi = (\xi_1, \xi_2, \xi_3)\in \R^3:\; |\xi_1|\ge |\xi|/2\}$, which has positive solid angle, there exist $R_0,c_0>0$ such that
\[
        |\widehat f_1(\xi)|\ge c_0|\xi|^{-5},\qquad \forall \xi\in\Gamma \; \mbox{ s.t. }|\xi|\ge R_0.
\]
Hence, there exists $c>0$ such that
\[
        \int_{\R^3}\ang\xi^2|\widehat f_1(\xi)|\dd\xi
        \ge c\int_{R_0}^\infty r^2r^{-5}r^2\dd r
        =c\int_{R_0}^\infty\frac{\dd r}{r}=\infty.
\]
Thus $f_1\notin\Bsp^2(\R^3)$.
\end{proof}

\begin{theorem}[Hydrogen-like threshold]\label{thm:hydrogenoid}
Let $\phi_{2}$ be  defined by \eqref{eq:phi2cut}. Then, 
\[
        \phi_{2}\in\Bsp^s(\R^3)\quad\text{for every }s<2,
        \qquad
        \phi_{2}\notin\Bsp^2(\R^3).
\]
\end{theorem}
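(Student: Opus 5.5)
The plan is to split $\phi_2$ into a smooth part and a model singular part covered by Lemma~\ref{lem:xjr}. Write, for $x\in\R^3$, $\phi_2(x)=x_1e^{g(x)}$ with $g(x)=-b|x|+a\chi(|x|)|x|=\kappa(|x|)|x|$, where $\kappa(r)=a\chi(r)-b$ is smooth on $[0,\infty)$. Fix a second cut-off $\eta\in C_c^\infty([0,\infty))$ with $\eta=1$ on $[0,1/2]$ and $\eta=0$ on $[1,\infty)$, and decompose $\phi_2=\eta(|x|)\phi_2+(1-\eta(|x|))\phi_2$. The far part $(1-\eta(|x|))\phi_2$ vanishes near the origin and equals $x_1e^{-b|x|}$ for $|x|\ge2$. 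Away from the origin $|x|$ is smooth, so this part is $C^\infty$, and every derivative is bounded by a polynomial times $e^{-b|x|}$. It is therefore a Schwartz function, exactly as in the proof of Theorem~\ref{thm:hydrogen}, and Lemma~\ref{lem:schwartz-barron} puts it in $\Bsp^s$ for all $s\ge0$.

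For the near part, on $B(0,1)$ we have $\phi_2=x_1e^{c|x|}$ with $c=a-b=Z/4$. Split $e^{cr}$ into even and odd parts in $r$:
\[
e^{cr}=\cosh(cr)+\sinh(cr)=E(r^2)+r\,O(r^2),
\]
where $E$ and $O$ are entire and $O(0)=c$. Then
\[
\eta(|x|)\phi_2(x)=x_1\eta(|x|)E(|x|^2)+x_1|x|\,\eta(|x|)O(|x|^2).
\]
The first term is $C_c^\infty$, because $|x|^2$ is smooth and $\eta(|x|)$ is smooth, being constant near $0$. It is therefore Schwartz and lies in every $\Bsp^s$. The second term is $f_1=x_1|x|A(|x|)$ with $A(r)=\eta(r)O(r^2)\in C_c^\infty([0,\infty))$ and $A(0)=c=Z/4\neq0$.

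Lemma~\ref{lem:xjr} gives $f_1\in\Bsp^s$ for all $s<2$ and $f_1\notin\Bsp^2$. Since $\Bsp^s$ is a vector space, $\phi_2$ is the sum of $f_1$ and a Schwartz function, so $\phi_2\in\Bsp^s$ for every $s<2$. For the endpoint, if $\phi_2$ were in $\Bsp^2$, then $f_1=\phi_2-(\text{Schwartz})$ would also be in $\Bsp^2$, contradicting the lemma. Hence $\phi_2\notin\Bsp^2(\R^3)$.

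The only step needing care is checking that the pieces classified as smooth really are smooth. The danger is at the origin, where $|x|$ is not smooth, and possibly on the support of $\chi'$. On $\{1/2\le|x|\le 2\}$ the function $|x|$ is smooth, so every composition appearing there is smooth. Near $0$, only even powers of $|x|$ occur in the terms claimed smooth, via $E(|x|^2)$ and $\eta=1$. Once this is checked, the rest follows directly from Lemma~\ref{lem:xjr}.
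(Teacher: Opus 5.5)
Your proposal is correct and follows essentially the same route as the paper. Both arguments use a radial cut-off to separate a Schwartz far part from the near part $x_1e^{c|x|}$ with $c=Z/4$, split $e^{cr}=\cosh(cr)+\sinh(cr)$ into even and odd parts, and apply Lemma~\ref{lem:xjr} with $A(r)=\eta(r)\sinh(cr)/r$ and $A(0)=c\neq0$; your endpoint argument (if $\phi_2\in\Bsp^2$ then $f_1=\phi_2-(\text{Schwartz})\in\Bsp^2$) just spells out a step the paper leaves implicit.
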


\begin{proof}
Choose $\eta\in C_c^\infty(\R^3)$, radial, equal to $1$ on $B(0, 1/2)$ and supported on $\overline{B(0,1)}$.  The function $(1-\eta)\phi_2$ is smooth and exponentially decaying, hence Schwartz. Now consider the function $\eta \phi_2$ which is equal to $0$ outside $\overline{B(0,1)}$. For all $x=(x_1, x_2, x_3)\in B(0,1)$, it holds that
\[
        \eta(x)\phi_{2}(x)=\eta(x) x_1e^{c|x|},\qquad \mbox{ with }c=a-b=Z/4.
\]
We now write $x_1e^{c|x|}$ as
\[
        x_1e^{c|x|}=x_1\cosh(c|x|)+x_1\sinh(c|x|).
\]
On the one hand, the function $\cosh(c|\cdot|)$ is analytic since for all $x\in \R^3$, $\cosh(c|x|)$ is a converging power series in $|x|^2$.  Thus $\R^3 \ni x=(x_1, x_2, x_3) \mapsto \eta(x) x_1\cosh(c|x|)$ belongs to $C_c^\infty(\R^3)$ and hence to $\mathcal S(\R^3)$. 

On the other hand, for all $x\in \R^3$,
\[
        \eta(x) x_1\sinh(c|x|)=x_1|x|\eta(x) \frac{\sinh(c|x|)}{|x|}.
\]
Since $\sinh(cr)/r$ is analytic in $r^2$ at $r=0$, it holds that  $\eta(x) x_1\sinh(c|x|) = x_1|x|A(|x|)$ with $A\in C_c^\infty([0, +\infty))$ and $A(0)=c\ne0$.  Lemma~\ref{lem:xjr} then implies that the function $\R^3 \ni x\mapsto \eta(x) x_1 \sinh(c|x|)$ belongs to $\Bsp^s(\R^3)$ for all $0\leq s<2$ but does not belong to $\Bsp^2(\R^3)$. 

Thus, using Lemma~\ref{lem:schwartz-barron}, $\phi_2 \in \Bsp^s(\R^3)$ for all $0\leq s < 2$ but $\phi_2 \notin \Bsp^2(\R^3)$, and hence the desired result.  
\end{proof}

\section{A global Fourier-resolvent argument for the many-body problem}
\label{sec:global}

We now prove the main many-body result. Throughout this section we set
$d=3N$, write
$$
\phi=e^{-F_{\rm cut}}\psi,
$$
and use the Fourier convention of Section~\ref{sec:2.1}.

\begin{theorem}[Jastrow quotient in spectral Barron spaces]\label{thm} Let \(R_1,\ldots,R_L\in\mathbb R^3\) be distinct clamped nuclei with charges \(Z_\alpha>0\), and let \(H\) be the self-adjoint Friedrichs realization on \(L^2(\mathbb R^{3N})\) associated with the Coulomb Hamiltonian defined by \eqref{eq:intro-H}-\eqref{eq:intro-V}. Let \(\psi\in D(H)\) satisfy \[ H\psi=E\psi,\qquad E<\inf\sigma_{\mathrm{ess}}(H). \] Let \(F_{\rm cut}\) be the cut-off Jastrow factor defined in \eqref{eq:Fcut} avec $\gamma = 0 \mbox{ ou } 1$, and set \[ \psi=e^{F_{\rm cut}}\phi . \] Then \[ \phi\in B^s_{\rm sp}(\mathbb R^{3N}) \qquad\text{for every }0\le s<2 . \] No permutation-symmetry or spin assumption is used in the proof. \end{theorem}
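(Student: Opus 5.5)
The plan is to conjugate the eigenvalue equation by the cut-off Jastrow factor and then solve the resulting equation on the Fourier side by a Neumann series at high frequencies. Writing $\psi=e^{F}\phi$ with $F=F_{\rm cut}$, a direct computation gives
\[
  -\Delta\phi-2\nabla F\cdot\nabla\phi+W\phi=E\phi,
  \qquad W:=V-\Delta F-|\nabla F|^2 .
\]
This identity holds in the distributional sense, using that $\psi\in H^1$, that $\phi\in H^1$, and that $F,\nabla F$ are bounded; the latter is clear from \eqref{eq:F2cut}--\eqref{eq:F3cut}, since $s\log s$ is bounded near $0$. The factors $-\tfrac12 Z^\ell$ and $\tfrac14$ in $F_{2,\rm cut}$ are chosen so that, on the sets where $\chi\equiv1$, $\Delta F_2$ produces exactly $-Z^\ell/|y^{i,\ell}|$ and $+1/|x^i-x^j|$, cancelling $V$ there. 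Thus $W$ has no Coulomb singularity of order $|y|^{-1}$. What remains consists of three kinds of terms:
\begin{itemize}
  \item bounded compactly supported (in collision variables) angular terms coming from $|\nabla F_2|^2$, of the form $\chi\chi'\,y/|y|\cdot y'/|y'|$;
  \item cut-off transition terms supported in $1\le|y|\le2$, which are smooth;
  \item the long-range Coulomb tails $(1-\chi)/|y|$.
\end{itemize}
Whenever $\gamma=1$, I would add the logarithmic terms from $F_3$, which are designed in \cite{FHOHS2005} to cancel the three-body part of $|\nabla F_2|^2$ up to terms of size $O(|y|\log|y|)$.

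The first step is to set up the Fourier calculus. Each coefficient $c(x)$ ($\partial_k F$, or a piece of $W$) is a function $a(Px)$ of a collision variable $Px\in\R^3$ or $\R^6$. Its Fourier transform is therefore a measure $\widehat a(\eta)$ carried on the range of $P^{T}$. The key decay claim is the following. For a localized angular block $\chi(|y|)y/|y|$, one has $|\widehat a(\eta)|\lesssim\ang\eta^{-3}$; this follows by the radial computation of Lemma~\ref{lem:xjr}. The tails $(1-\chi)/|y|$ have $\widehat a\sim|\eta|^{-2}$ only near $\eta=0$ and decay rapidly at infinity. I would encode all of this in a class of dominating measures $\mu$ with $\int\ang\eta^{t}\,d\mu(\eta)$ controlled on dyadic annuli, namely $\mu(\{|\eta|\sim 2^j\})\lesssim 2^{-j}$ for the first-order coefficients. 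The class must be stable under convolution, which is needed for the products in $|\nabla F|^2$.

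Next I split $\widehat\phi=\one_{|\xi|\le\Lambda}\widehat\phi+\one_{|\xi|>\Lambda}\widehat\phi$ and write the equation as
\[
  \widehat\phi(\xi)=(|\xi|^2+\rho)^{-1}\Bigl[(E+\rho)\widehat\phi-(2\pi)^{-d}\bigl(2i\,\widehat{\nabla F}*(\zeta\widehat\phi)-\widehat W*\widehat\phi\bigr)\Bigr](\xi),
\]
with $\rho>0$ large. The low-frequency piece lies in $L^1(\ang\xi^s d\xi)$ by Cauchy--Schwarz from $\phi\in H^1$ (indeed $L^2$ suffices on a ball). The high-frequency piece solves $u=g+Tu$, where $g$ collects the low-frequency input. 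I would show that $T$ is a contraction on $X_s=L^1(\ang\xi^s)$ for $s<2$ once $\Lambda,\rho$ are large. The main term is the gradient coefficient, for which
\[
  \ang\xi^{s}\,\frac{|\zeta|}{|\xi|^2+\rho}\lesssim \ang\xi^{s-1}.
\]
Splitting $\ang\xi^{s}\le \ang\zeta^{s}+\ang{\xi-\zeta}^{s}$, the case where $\zeta$ carries the weight gives a small factor, while the case $|\xi-\zeta|\gtrsim|\xi|$ needs $\int\ang\eta^{s-1}\ang\eta^{-3}$ over the fibre dimension $3$. This is finite exactly when $s<2$. This is precisely the mechanism of Lemma~\ref{lem:xjr}. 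The uniqueness of the fixed point, together with the fact that $\widehat\phi$ (in $L^2$) itself solves the equation, then identifies $\widehat\phi$ with the $X_s$ solution. To justify this I would first run the contraction in a weaker space, $X_0\cap$ weighted $L^2$, where $\widehat\phi$ is known to lie.

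The main obstacle is making the contraction estimate rigorous for measures supported on lower-dimensional subspaces. Convolution of $\one_{|\xi|>\Lambda}u$ with a measure on $P^T\R^3$ acts fibrewise, and the gain $\ang\eta^{-3}$ is available only in the three collision directions. So one must verify that the weight loss $\ang\xi^s\ang\zeta^{-s}$ is absorbed fibrewise and that smallness really comes from $\Lambda\to\infty$, via the tails $\mu(|\eta|>\Lambda/2)\to0$ and $\rho^{-1/2}$ for the part where $\zeta$ carries the weight. I would first try a dyadic decomposition $|\eta|\sim2^j$ of each coefficient measure, bounding the $j$-th piece of $T$ by $C\,2^{j(s-2)}+C\rho^{-1/2}$, and summing over $j$. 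I expect the products $|\nabla F|^2$ (angular$\times$angular in different collision variables) and the $F_3$ log terms to require checking that the dominating-measure class is closed under these operations with the same annular decay.
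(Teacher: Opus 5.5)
Your outline has the same architecture as the paper's proof: conjugation, Fourier-control measures carried by collision subspaces and closed under convolution, $H^1$ control of low frequencies, a high-frequency Neumann series in weighted Fourier $L^1$, and uniqueness in a weighted $L^2$ space. However, the central contraction estimate, which you flag as the main obstacle, rests on premises that are false for the actual coefficients.

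\textbf{The measure class and smallness mechanism.} The angular block $a(y)=\chi(|y|)y_j/|y|$ has Fourier transform $-8\pi i\,\eta_j|\eta|^{-4}+O(|\eta|^{-\infty})$ at high frequency, which is borderline in $\R^3$. Each dyadic shell $\{|\eta|\sim 2^j\}$ carries mass of order $1$, not $2^{-j}$. In fact $\widehat a\notin L^1(\R^3)$, since otherwise $a$ would be continuous at $0$. Consequences:
\begin{itemize}
\item your class does not contain $\nabla F_{\rm cut}$;
\item the tails $\mu(\{|\eta|>\Lambda/2\})$ are infinite rather than small;
\item a per-shell bound $C2^{j(s-2)}+C\rho^{-1/2}$ cannot be summed over $j$.
\end{itemize}

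\textbf{The exponent count.} The bound $\ang\xi^s|\zeta|/(|\xi|^2+\rho)\lesssim\ang\xi^{s-1}$ fails when $|\zeta|\gg|\xi|$. Also, $\int_{\R^3}\ang\eta^{s-1}\ang\eta^{-3}\,d\eta$ converges only for $s<1$. Conversely, with shell masses $2^{-j}$ the same bookkeeping would give $s<3$, which Theorem~\ref{thm:hydrogenoid} rules out. So the threshold $s<2$ is asserted rather than derived.

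\textbf{What is missing.} You need the borderline class: densities $\ang k^{-q}(1+\log\ang k)^m$ with $q\ge 3$, pushed forward from collision subspaces. You also need control of $\mu$ on annuli centred at arbitrary points, not only at the origin, because the convolution $\int\cdots\,d\mu(\xi-\eta)$ requires it. Lemma~\ref{lem:dyadic-admissible} proves
\[
\mu\bigl(\{\zeta:\ R\le|a+\zeta|<2R\}\bigr)\lesssim\bigl(1+\log(2+R+|a|)\bigr)^M\min\bigl\{1,(R/\ang a)^3\bigr\}.
\]
This is exactly where the lower-dimensional support is handled, by splitting $a=a_S+a_\perp$. Lemma~\ref{lem:aux} gives stability of this bound under convolution, which is needed for the products in $|\nabla F_{\rm cut}|^2$.

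For a fixed input frequency $\eta$, the estimate then runs as follows (Lemma~\ref{lem:resolvent-convolution}):
\begin{itemize}
\item output shells $|\xi|\sim R\gtrsim\ang\eta$ contribute $\sum R^{s-2}\ang\eta(1+\log R)^M\simeq\ang\eta^{s-1}(1+\log\ang\eta)^M$, which converges precisely for $s<2$;
\item shells with $R\ll\ang\eta$ are controlled by the factor $(R/\ang\eta)^3$.
\end{itemize}
Smallness comes from the input frequency being large:
\[
\ang\eta^{s-1}(1+\log\ang\eta)^M\le C\Lambda^{-1}(1+\log\Lambda)^M\ang\eta^s\quad\text{for }|\eta|>\Lambda.
\]
It does not come from $\rho$ or from tails of $\mu$.

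\textbf{Two smaller corrections.}
\begin{itemize}
\item $\widehat\phi\in L^1$ does not follow from $\phi\in H^1$ when $d=3N\ge 2$, so $\widehat\phi$ is not known to lie in $X_0$. The uniqueness step should use $P_\Lambda L^2_1$ alone, where $P_\Lambda TP_\Lambda$ contracts simply because the coefficients are bounded and $\ang\xi/(|\xi|^2+\rho)\le C\Lambda^{-1}$ on $\{|\xi|>\Lambda\}$. Run the Neumann series in $L^2_1\cap L^1_s$ so that the fixed point lies in both spaces.
\item For $\gamma=1$, no cancellation by $F_{3,\rm cut}$ is used or needed; your claim that it cancels $|\nabla F_2|^2$ up to $O(|y|\log|y|)$ is not accurate. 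One only checks that $\nabla F_{3,\rm cut}$ and $\Delta F_{3,\rm cut}$ are bounded with borderline Fourier control in $\R^6$ (Lemma~\ref{lem:log-blocks}), and that the resulting cross terms in $|\nabla F_{\rm cut}|^2$ stay in the class.
\end{itemize}
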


\begin{remark}[Role of the spectral assumption]
The assumption \(E<\inf\sigma_{\rm ess}(H)\) places \(\psi\) in the bound-state
regime. In the proof below, after multiplication by the bounded factor
\(e^{-F_{\rm cut}}\), the only a priori information used by the low/high-frequency Neumann argument is
\[
  \phi=e^{-F_{\rm cut}}\psi\in H^1(\mathbb R^{3N}).
\]
Thus the analytic core of the argument is Proposition~\ref{prop:abstract-bootstrap}: any \(H^1\)-solution
of the conjugated equation whose coefficients are bounded and satisfy the
admissible Fourier-control assumptions belongs to \(B^s_{\rm sp}(\mathbb R^{3N})\)
for all \(0\leq s<2\). These assumptions are verified for the cut-off Jastrow
factor in Proposition~\ref{prop:coefficients}. The spectral assumption
is therefore used to justify the physical bound-state setting and the
\(H^1\) input, while the subsequent bootstrap is a global Fourier argument
and does not use a local decomposition near individual collision sets. No quantitative exponential decay estimate is used in the Fourier bootstrap; only the 
\(H^1\) a priori bound and the boundedness of the cut-off factor enter the argument.
\end{remark}


The proof occupies the rest of this section. We first derive the
conjugated equation, then verify the Fourier-control class of all
coefficients generated by $F_{\rm cut}$, and finally close the
low/high-frequency bootstrap.

\paragraph{Proof strategy.}
The proof has three main steps. First, we conjugate the eigenvalue equation by the
bounded cut-off Jastrow factor and obtain an equation of the form
\[
(-\Delta+\rho)\phi
=
2\nabla F_{\rm cut}\cdot \nabla\phi
+
\bigl(\Delta F_{\rm cut}+|\nabla F_{\rm cut}|^2-V+E+\rho\bigr)\phi .
\]
Second, we prove that every scalar coefficient appearing in this equation is bounded
and has a Fourier transform dominated by an admissible Fourier-control measure. This
step contains the cancellation of the Coulomb singularities, the treatment of the
logarithmic three-body term, and the stability of the control class under products.
Third, we use the resolvent multiplier \((|\xi|^2+\rho)^{-1}\) and the annular estimates
for admissible measures to close a high-frequency Neumann argument in weighted
Fourier \(L^1\) spaces. The low frequencies are controlled by the a priori bound
\(\phi\in H^1(\mathbb R^{3N})\).


\subsection{Fourier-control measures}

We need a convenient way to encode functions depending on only a few collision variables.  For $q\ge3$ and $m\ge0$, define
\begin{equation}\label{eq:kappa}
        \kappa_{q,m}(k)=|k|^{-2}\one_{|k|\le1}+\ang k^{-q}\bigl(1+\log\ang k\bigr)^m\one_{|k|>1},
        \qquad k\in\R^q.
\end{equation}
The singularity $|k|^{-2}$ is locally integrable in dimension $q\ge3$, and the tail $\ang k^{-q}$ is borderline at infinity.

\begin{definition}[Admissible Fourier-control measure]
\label{def:admissible-measure}
For \(q\ge 3\) and \(m\ge 0\), set
\[
 \kappa_{q,m}(k)
 =
 |k|^{-2}{\bf 1}_{|k|\le 1}
 +
 \langle k\rangle^{-q}\bigl(1+\log\langle k\rangle\bigr)^m
 {\bf 1}_{|k|>1},
 \qquad k\in\mathbb R^q .
\]
An admissible Fourier-control measure on \(\mathbb R^d\) is a finite sum of
measures of the following form. Let \(q_1,\ldots,q_p\ge 3\),
\(m_1,\ldots,m_p\ge 0\), and let
\[
   L_r:\mathbb R^{q_r}\longrightarrow \mathbb R^d,
   \qquad 1\le r\le p,
\]
be injective linear maps. We define
\[
  \mu
  =
  \mathcal L_{(L_1,\ldots,L_p)} \#
  \left(
     \bigotimes_{r=1}^p \kappa_{q_r,m_r}(k_r)\,dk_r
  \right),
\]
where the push-forward is taken by the addition map
$$
\mathcal L_{(L_1, \ldots, L_p)}: \left\{ \begin{array}{ccc}
\R^{q_1} \times \cdots \times \R^{q_p} & \to & \R^d\\
  (k_1,\ldots,k_p) &
  \mapsto & 
  \sum_{r=1}^p L_r k_r .\\
  \end{array}\right.
$$
We choose the convention that the case \(p=0\) corresponds to the measure \(\mu=\delta_0\).
\end{definition}



The following lemma is essential in the Fourier bounds we establish later on the coefficients of the conjugated equation.

\begin{lemma}[Local finiteness and annular bound for admissible measures]
\label{lem:dyadic-admissible}
Let \(\mu\) be an admissible measure in the sense of Definition~\ref{def:admissible-measure}. Then \(\mu\) is a positive Radon measure on
\(\mathbb R^d\). Moreover, there exist constants \(C>0\) and \(M\in\mathbb N\) such that,
for all \(a\in\mathbb R^d\) and all \(R\geq 1\),
\begin{equation}\label{eq:est1}
\mu\bigl(\{\zeta\in \R^d: \; R\leq |a+\zeta|<2R\}\bigr)
\leq
C
\bigl(1+\log(2+R+|a|)\bigr)^M
\min\left\{1,\left(\frac{R}{\langle a\rangle}\right)^3\right\},
\end{equation}
and
\begin{equation}\label{eq:est2}
\mu\bigl(\{\zeta\in \R^d: \; |a+\zeta|<1\}\bigr)
\leq
C
\bigl(1+\log(2+|a|)\bigr)^M
\min\left\{1,\langle a\rangle^{-3}\right\}.
\end{equation}
\end{lemma}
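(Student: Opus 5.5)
The plan is to reduce to a single tensor-product block, prove the estimates for one factor directly, and then obtain products by induction on \(p\) through a dyadic convolution argument. Since an admissible measure is a finite sum of blocks and both \eqref{eq:est1} and \eqref{eq:est2} are stable under finite sums (take the largest \(M\) and add the constants), it suffices to treat one block
\[
\mu=\mathcal L_{(L_1,\ldots,L_p)}\#\Bigl(\bigotimes_{r=1}^p\kappa_{q_r,m_r}(k_r)\,dk_r\Bigr).
\]
Positivity is clear. Local finiteness follows once \eqref{eq:est1}--\eqref{eq:est2} are proved with \(a=0\): a ball \(B(0,2^J)\) is covered by the unit ball and finitely many dyadic annuli, so \(\mu\) is Radon. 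The case \(p=0\) (\(\mu=\delta_0\)) is immediate, since \(\delta_0\) of the annulus is nonzero only if \(R\le|a|<2R\), where \((R/\langle a\rangle)^3\gtrsim1\); similarly for the unit ball.

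\textbf{The case \(p=1\).} Let \(L:\mathbb R^q\to\mathbb R^d\) be injective. Then \(c|k|\le|Lk|\le C|k|\), and the preimage under \(L\) of any ball of radius \(\varrho\) has diameter \(\le C\varrho\). The set \(\{k:\ R\le|a+Lk|<2R\}\) lies in \(L^{-1}(B(-a,2R))\), which has diameter \(\lesssim R\).

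If \(|a|\ge 4CR\) (and \(|a|\ge1\)), every such \(k\) satisfies \(|k|\gtrsim|a|\). The mass is then bounded by the volume times the supremum of \(\kappa\):
\[
R^q\langle a\rangle^{-q}(1+\log\langle a\rangle)^m\le (R/\langle a\rangle)^3(1+\log\langle a\rangle)^m ,
\]
using \(q\ge3\) and \(R\lesssim\langle a\rangle\).

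If \(|a|\lesssim R\), the set lies in \(\{|k|\le C R\}\), and
\[
\int_{|k|\le CR}\kappa_{q,m}\lesssim 1+\log^{m+1}(2+R),
\]
because \(|k|^{-2}\) is integrable near \(0\) for \(q\ge3\) and the tail produces one extra logarithm. This gives \eqref{eq:est1} with \(M=m+1\).

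The unit-ball estimate \eqref{eq:est2} is handled the same way. The preimage has bounded diameter, so for \(|a|\) large the mass is at most \(\langle a\rangle^{-q}\log^m\le\langle a\rangle^{-3}\log^m\). For \(|a|\) bounded we use the local integrability of \(|k|^{-2}\).

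\textbf{Induction on \(p\).} Write \(\mu=\mu'*\nu\), where \(\mu'\) is the block built from the first \(p-1\) factors and \(\nu\) is the push-forward of the last one. By Fubini (Tonelli, everything positive),
\[
\mu(\{R\le|a+\zeta|<2R\})=\int\nu(\{R\le|a+\zeta'+\eta|<2R\})\,d\mu'(\zeta').
\]
I apply the \(p=1\) bound to \(\nu\) with shift \(b=a+\zeta'\). I then split the \(\zeta'\)-integral into the unit ball \(\{|b|<1\}\) and the dyadic shells \(\{2^j\le|b|<2^{j+1}\}\), \(j\ge0\). The induction hypothesis bounds the \(\mu'\)-mass of each piece by \(\log^{M'}\cdot\min\{1,(2^j/\langle a\rangle)^3\}\). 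This leads to the sum
\[
\sum_{j\ge0}\min\Bigl\{1,\Bigl(\tfrac{R}{2^j}\Bigr)^3\Bigr\}\min\Bigl\{1,\Bigl(\tfrac{2^j}{\langle a\rangle}\Bigr)^3\Bigr\}\,(1+\log(2+2^j+R+|a|))^{M'+M_1}.
\]
When \(R\le\langle a\rangle\), I split this sum into three ranges.
\begin{itemize}
\item For \(2^j\le R\), the terms \((2^j/\langle a\rangle)^3\) form a geometric sum bounded by \((R/\langle a\rangle)^3\).
\item For \(R<2^j<\langle a\rangle\), each term equals exactly \((R/\langle a\rangle)^3\), and there are \(O(\log\langle a\rangle)\) of them.
\item For \(2^j\ge\langle a\rangle\), the terms \((R/2^j)^3\) sum to \(\lesssim(R/\langle a\rangle)^3\). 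The geometric decay in \(j\) absorbs the growing logarithm \(\log(2+2^j)\).
\end{itemize}
When \(R>\langle a\rangle\), the same splitting gives a bound \(\lesssim\log^{M}\). The unit-ball estimate \eqref{eq:est2} is proved identically, with the \(p=1\) unit-ball bound replacing the annular one. Each induction step increases \(M\) by a fixed amount.

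\textbf{Main obstacle.} The delicate point is the bookkeeping in the dyadic sum. The middle range contributes an extra \(\log\), and the shells with \(2^j\gg R+|a|\) carry logarithms in \(2^j\) rather than in \(R+|a|\). I will handle both by using the cubic decay \((R/2^j)^3\), which is available precisely because \(q_r\ge3\) in the \(p=1\) estimate. This is why \(M\) grows with \(p\) but remains finite.
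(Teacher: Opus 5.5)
Your proposal is correct and follows essentially the paper's route. You prove single-block bounds from the injectivity of $L$: volume times the supremum of $\kappa_{q,m}$ when $|a|\gg R$, and the integral of $\kappa_{q,m}$ over a ball of radius $\lesssim R$ otherwise. You then get stability under convolution via Tonelli and a dyadic decomposition of the shift, using the same three-range summation that the paper isolates as its dyadic scale inequality (Lemma~\ref{lem:aux}). The only difference is cosmetic: you peel off one factor at a time by induction on $p$, whereas the paper proves convolution stability for any two measures satisfying both bounds and then writes the block as $\nu_1\star\cdots\star\nu_p$.
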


The proof of Lemma~\ref{lem:dyadic-admissible} makes use of the following auxiliary lemma, the proof of which is given in the appendix. 

\begin{lemma}[Dyadic scale inequality]\label{lem:aux}
Let $M_1,M_2\in \N$. For $X,Y\ge 1$, set
\[
  \alpha(X,Y):=\min\left\{1,\left(\frac{X}{Y}\right)^3\right\}.
\]
Then, there exists a constant $C=C(M_1,M_2)>0$ such that for all $R\ge 1$ and
all $A\ge 1$,
\begin{align}
&\sum_{\ell \in \mathbb{N}}
\bigl(1+\log(2+R+2^\ell)\bigr)^{M_1}
\bigl(1+\log(2+2^\ell+A)\bigr)^{M_2}
\alpha(R,2^\ell)\alpha(2^\ell,A)
\notag\\
&\hspace{4cm}\le
C\bigl(1+\log(2+R+A)\bigr)^{M_1+M_2+1}
\alpha(R,A).
\label{eq:dyadic-ineq}
\end{align}
\end{lemma}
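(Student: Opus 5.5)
The plan is a direct dyadic case analysis: split the sum over \(\ell\) according to the position of \(2^\ell\) relative to \(R\) and \(A\), so that on each piece both factors \(\alpha\) are explicit. Throughout, write \(\Lambda:=1+\log(2+R+A)\). I will use one elementary inequality repeatedly: for \(X\ge1\) and \(j\in\N\),
\[
1+\log(2+2^jX)\le 1+j\log 2+\log(2+X)\le (1+j)\bigl(1+\log(2+X)\bigr).
\]
It converts logarithmic factors at scale \(2^\ell\) into \(\Lambda\) times a polynomial in the number \(j\) of dyadic steps beyond \(\max(R,A)\). Recall also that \(\alpha(R,2^\ell)=1\) if \(2^\ell\le R\) and \(=(R/2^\ell)^3\) otherwise, while \(\alpha(2^\ell,A)=1\) if \(2^\ell\ge A\) and \(=(2^\ell/A)^3\) otherwise.

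\emph{Case \(R\ge A\)}, so that \(\alpha(R,A)=1\). I split the indices into three ranges.
For \(2^\ell<A\): both logarithms are at most \(\Lambda\), and the weights \((2^\ell/A)^3\) form a geometric series bounded by a constant. This range contributes \(C\Lambda^{M_1+M_2}\).
For \(A\le 2^\ell\le R\): every term is at most \(\Lambda^{M_1+M_2}\), and there are at most \(\log_2(R/A)+1\le C\Lambda\) such indices. This range contributes \(C\Lambda^{M_1+M_2+1}\); it is the only place where the extra power of the logarithm is produced.
For \(2^\ell>R\): write \(2^\ell\approx 2^jR\) with \(j\ge0\). The term is at most \(2^{-3j}(1+j)^{M_1+M_2}\Lambda^{M_1+M_2}\) up to a constant, and summing in \(j\) gives \(C\Lambda^{M_1+M_2}\).

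\emph{Case \(R<A\)}, so that \(\alpha(R,A)=(R/A)^3\). I use the same three ranges.
For \(2^\ell\le R\): the product of the \(\alpha\)'s is \((2^\ell/A)^3\). This is geometric in \(\ell\) and dominated by its top term \((R/A)^3\), up to a constant, times \(\Lambda^{M_1+M_2}\).
For \(R<2^\ell<A\): the product telescopes exactly to \((R/2^\ell)^3(2^\ell/A)^3=(R/A)^3\). There are at most \(\log_2(A/R)+1\le C\Lambda\) such indices, which gives \(C\Lambda^{M_1+M_2+1}(R/A)^3\).
For \(2^\ell\ge A\): write \(2^\ell\approx 2^jA\). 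The product is \((R/2^\ell)^3\le (R/A)^3 2^{-3j}\), and the logarithmic factors are controlled by \((1+j)^{M_1+M_2}\Lambda^{M_1+M_2}\) via the inequality above. Summing in \(j\) gives \(C\Lambda^{M_1+M_2}(R/A)^3\).

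Adding the three ranges in each case yields \eqref{eq:dyadic-ineq}, since \(\Lambda\ge1\) allows all lower powers of \(\Lambda\) to be absorbed into \(\Lambda^{M_1+M_2+1}\). There is no real obstacle here. The only points needing care are the bookkeeping of the logarithmic factors in the tail ranges \(2^\ell>\max(R,A)\), which the inequality above handles uniformly, and the boundary indices where \(2^\ell\) is comparable to \(R\) or \(A\), which only affect constants.
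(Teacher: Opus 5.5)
Your proof is correct and follows essentially the same dyadic case analysis as the paper. The paper splits at $A\le 2R$ versus $A>2R$, and in the first case it simply counts the $O(1+\log(2+R))$ indices with $2^\ell\le 2R$ instead of using your three ranges, while its second case matches your case $R<A$ range by range. One small imprecision, affecting only constants: in your middle range for $R\ge A$, the logarithms are bounded by $C\Lambda$ rather than by $\Lambda$, since $2+2R$ may exceed $2+R+A$.
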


\begin{proof}[Proof of Lemma~\ref{lem:dyadic-admissible}]
It is enough to prove the result for one elementary admissible measure; finite linear combinations with positive coefficients are
then handled by summing the estimates and increasing the constant $C$ and the logarithmic exponent $M$.

We shall use the following notation. For \(a\in\mathbb R^d\) and \(R\ge 1\), set
\[
B(a):=\{\zeta\in\mathbb R^d:\ |a+\zeta|<1\},
\qquad
A_R(a):=\{\zeta\in\mathbb R^d:\ R\le |a+\zeta|<2R\}.
\]
We also write
\[
\alpha(R,A):=\min\left\{1,\left(\frac{R}{A}\right)^3\right\},
\qquad A\ge 1,\ R\ge 1,
\]
The values of the constants $C>0$ and $M\in \mathbb{N}$ below may change along the calculations but are always independent of $a\in \mathbb{R}^d$ or $R\geq 1$. 
We now prove (\ref{eq:est1}) and (\ref{eq:est2}) for any elementary admissible measure. Any positive measure satisfying (\ref{eq:est1}) and (\ref{eq:est2}) is then obviously locally finite and is then a positive Radon measure (since by construction it is a Borel measure on $\R^d$). 

\medskip

{\bfseries Step~1:} We first treat the case $p=0$, namely when $\mu = \delta_0$. Then, 
\[
\delta_0(B(a))\neq 0
\quad\Longrightarrow\quad
|a|<1,
\]
and
\[
\delta_0(A_R(a))\neq 0
\quad\Longrightarrow\quad
R\le |a|<2R,
\]
thus (\ref{eq:est1}) and (\ref{eq:est2}) are immediate.

\medskip

{\bfseries Step~2:}
We now prove (\ref{eq:est1}) and (\ref{eq:est2}) for a single block measure $\mu$ of the form
\[
\mu = L_{\#}\bigl(\kappa_{q,m}(k)\,dk\bigr),
\qquad q\ge 3,\; m\geq 0, 
\]
where \(L:\mathbb R^q\to\mathbb R^d\) is injective. Let
$
S=L(\mathbb R^q)\subset\mathbb R^d$,
and decompose
\[
a=a_S+a_\perp,\qquad a_S\in S,\quad a_\perp\in S^\perp.
\]
Since \(L:\mathbb R^q\to S\) is an isomorphism onto its range, there are constants
\(c_L,C_L>0\) such that
\[
c_L |k|\le |Lk|\le C_L |k|,\qquad \forall k\in\mathbb R^q.
\]
Let \(k_a\in\mathbb R^q\) be uniquely defined by
\[
Lk_a=-a_S.
\]

We start with (\ref{eq:est2}). If \(B(a)\cap S=\varnothing\), there is nothing to
prove. Otherwise, the condition \(|a+Lk|<1\) implies
\[
|a_\perp|<1,
\qquad
|L(k-k_a)|<1,
\]
and hence \(k\in B(k_a,C)\). As $|a|$ goes to infinity, it holds that
\(|a_S|\simeq |a|\), and therefore \(|k_a| = \mathcal O(|a|)\). Thus, there exists $C>0$ such that for all $a\in \R^d$ such that $|a|\geq C$,
\[
\mu(B(a))
\le
C \int_{B(k_a,C)}
\langle k\rangle^{-q}\bigl(1+\log\langle k\rangle\bigr)^m\,dk
\le
C\langle a\rangle^{-q}
\bigl(1+\log\langle a\rangle\bigr)^m.
\]
Since \(q\ge 3\), this gives
\[
\mu(B(a))
\le
C\bigl(1+\log(2+|a|)\bigr)^m
\langle a\rangle^{-3}
\qquad \text{for }a\in \R^d \mbox{ s.t. } |a|\ge C.
\]
Moreover, there exists a constant $C'>0$ such that $\nu(B(a)) \leq C'$ for  all $a\in \R^d$ such that $|a|\leq C$ because
\[
\kappa_{q,m}(k)=|k|^{-2}\mathbf 1_{\{|k|\le 1\}}
+
\langle k\rangle^{-q}
\bigl(1+\log\langle k\rangle\bigr)^m\mathbf 1_{\{|k|>1\}}
\]
is locally integrable in dimension \(q\ge 3\). Hence, there exists $C>0$ and $M\in \mathbb{N}$ such that for all $a\in \R^d$
\begin{equation}\label{eq:4.16a}
\mu(B(a))
\le
C\bigl(1+\log(2+|a|)\bigr)^M
\min\{1,\langle a\rangle^{-3}\}.
\end{equation}
Hence, $\mu$ satisfies (\ref{eq:est2}). 

\medskip

We next prove (\ref{eq:est1}) for the same single block measure $\mu$. Let $R\geq 1$ and $a\in \R^d$. 

Assume first that\(\langle a\rangle\ge K_L R\), where \(K_L>0\) is a sufficiently large constant depending
only on \(L\). If \(A_R(a)\cap S=\varnothing\), there is again nothing to prove. Otherwise,
from
\[
R\le |a+Lk|<2R
\]
we get
\[
|a_\perp|<2R,
\qquad
|L(k-k_a)|<2R.
\]
Thus, there exists $C>0$ such that \(k\in B(k_a,CR)\). Since \(\langle a\rangle\ge K_L R\), choosing \(K_L\) large enough
gives that for all $k\in B(k_a, CR)$, $|k|\simeq |k_a|= \mathcal O(\langle a\rangle)$ as $\langle a \rangle$ goes to infinity. 
Therefore
\[
\mu(A_R(a))
\le
C R^q
\langle a\rangle^{-q}
\bigl(1+\log(2+R+|a|)\bigr)^m.
\]
Since \(q\ge 3\) and \(R/\langle a\rangle\le 1\) in this case, we obtain
\begin{equation}\label{eq:4.16b}
\nu(A_R(a))
\le
C
\bigl(1+\log(2+R+|a|)\bigr)^m
\left(\frac{R}{\langle a\rangle}\right)^3.
\end{equation}

It remains to consider the complementary case \(\langle a\rangle\le K_L R\). Then
\[
|a+Lk|<2R
\quad\Longrightarrow\quad
|k|\le C R.
\]
Consequently,
\[
\mu(A_R(a))
\le
C\int_{|k|\le CR}\kappa_{q,m}(k)\,dk.
\]
The singularity at \(k=0\) is integrable because \(q\ge 3\), and, for \(R\ge 1\),
\[
\int_{1\le |k|\le CR}
\langle k\rangle^{-q}
\bigl(1+\log\langle k\rangle\bigr)^m\,dk
\le
C\bigl(1+\log(2+R)\bigr)^{m+1}.
\]
Since \(\langle a\rangle\le K_LR\), we have
\[
\min\left\{1,\left(\frac{R}{\langle a\rangle}\right)^3\right\}
\ge c_L>0.
\]
Thus, after increasing the logarithmic exponent if necessary,
\begin{equation}\label{eq:4.16c}
\mu(A_R(a))
\le
C
\bigl(1+\log(2+R+|a|)\bigr)^M
\min\left\{1,\left(\frac{R}{\langle a\rangle}\right)^3\right\}.
\end{equation}
Combining~\eqref{eq:4.16b} and~\eqref{eq:4.16c} proves the annular estimate for a single block.

\medskip

{\bfseries Step~3:} We now prove that (\ref{eq:est1}) and (\ref{eq:est2}) are stable under convolution. Let \(\mu_1,\mu_2\)
be two positive Radon measures. Let us assume that for all $i=1,2$, there exist $C_i>0$ and $M_i\in \N$ such that for all $a\in \R^d$ and all $R\geq 1$,
\begin{equation}\label{eq:4.16d}
\mu_i(B(a))
\le
C_i
\bigl(1+\log(2+|a|)\bigr)^{M_i}
\min\{1,\langle a\rangle^{-3}\},
\end{equation}
and
\begin{equation}\label{eq:4.16e}
\mu_i(A_R(a))
\le
C_i
\bigl(1+\log(2+R+|a|)\bigr)^{M_i}
\alpha(R,\langle a\rangle).
\end{equation}
We claim that \(\mu_1\star\mu_2\) satisfies the same two types of estimates, with possibly larger constant and logarithmic exponent.

Using Lemma~\ref{lem:aux}, we obtain that there exists a constant $C= C(M_1, M_2)$ such that for all $R\geq 1$ and all $A\geq 1$, 

\begin{align}\notag
& \sum_{\ell \in \mathbb{N}}
\bigl(1+\log(2+R+2^\ell)\bigr)^{M_1}
\bigl(1+\log(2+2^\ell+A)\bigr)^{M_2}
\alpha(R,2^\ell)\alpha(2^\ell,A)\\\label{eq:4.16f}
&\le
C
\bigl(1+\log(2+R+A)\bigr)^{M_1+M_2+1}
\alpha(R,A).\\ \notag
\end{align}

We prove first (\ref{eq:est2}) for $\mu_1 \star \mu_2$. By Tonelli, it holds that
\[
(\mu_1\star \mu_2)(B(a))
=
\int_{\mathbb R^d}
\mu_1(B(a+\zeta))\,d\mu_2(\zeta).
\]
We decompose the \(\zeta\)-integration domain into
\[
B(a)=\{\zeta\in \R^d:\ |a+\zeta|<1\}
\]
and the dyadic annuli \(A_{2^\ell}(a)\), $\ell \in \mathbb{N}$. For $\zeta \in B(a)$,
\(\mu_1(B(a+\zeta))\le C\), while \(\mu_2(B(a))\) satisfies~\eqref{eq:4.16d}.  For $\ell \in \mathbb{N}$
estimate~\eqref{eq:4.16d} yields that for all $\zeta \in A_{2^\ell}(a)$,
\[
\mu_1(B(a+\zeta))
\le
C
\bigl(1+\log(2+2^\ell)\bigr)^{M_1}
\alpha(1,2^\ell),
\]
and estimate~\eqref{eq:4.16e} gives
\[
\mu_2(A_{2^\ell}(a))
\le
C
\bigl(1+\log(2+2^\ell+|a|)\bigr)^{M_2}
\alpha(2^\ell,\langle a\rangle).
\]
Using~\eqref{eq:4.16f} with \(R=1\), we obtain
\begin{equation}\label{eq:4.16g}
(\mu_1\star\mu_2)(B(a))
\le
C
\bigl(1+\log(2+|a|)\bigr)^M
\min\{1,\langle a\rangle^{-3}\}.
\end{equation}

We now prove (\ref{eq:est1}). Again by Tonelli,
\[
(\mu_1\star\mu_2)(A_R(a))
=
\int_{\mathbb R^d}
\mu_1(A_R(a+\zeta))\,d\mu_2(\zeta).
\]

For all $\zeta \in \R^d$ such that \(|a+\zeta|<1\), we have
\[
\mu_1(A_R(a+\zeta))
\le
C\bigl(1+\log(2+R)\bigr)^{M_1},
\]
and the mass of this region with respect to \(\mu_2\) is bounded by~\eqref{eq:4.16d}. Since
\(R\ge 1\), one has
\[
\min\{1,\langle a\rangle^{-3}\}
\le
C\,\alpha(R,\langle a\rangle),
\]
so this contribution is bounded by
\[
C
\bigl(1+\log(2+R+|a|)\bigr)^M
\alpha(R,\langle a\rangle).
\]
For all $\ell \in\mathbb{N}$ and all \(\zeta \in A_{2^\ell}(a)\), estimate~\eqref{eq:4.16e} gives
\[
\mu_1(A_R(a+\zeta))
\le
C
\bigl(1+\log(2+R+2^\ell)\bigr)^{M_1}
\alpha(R,2^\ell),
\]
while
\[
\mu_2(A_{2^\ell}(a))
\le
C
\bigl(1+\log(2+2^\ell+|a|)\bigr)^{M_2}
\alpha(2^\ell,\langle a\rangle).
\]
Summing over $\ell \in \mathbb{N}$ and using \eqref{eq:4.16f}, we obtain
\begin{equation}\label{eq:4.16h}
(\mu_1\star \mu_2)(A_R(a))
\le
C
\bigl(1+\log(2+R+|a|)\bigr)^M
\alpha(R,\langle a\rangle).
\end{equation}
Thus the class of positive measures satisfying (\ref{eq:est1}) and (\ref{eq:est2})
is stable under convolution.

\medskip

{\bfseries Step~4:} We now return to an elementary admissible measure
\[
\mu
=
\mathcal L_{(L_1,\dots,L_p)} \#
\left(
\bigotimes_{r=1}^p \kappa_{q_r,m_r}(k_r)\,dk_r
\right),
\]
where the push-forward is taken by
\[
\mathcal L_{(L_1, \ldots, L_p)}: \left\{ \begin{array}{ccc}
\R^{q_1} \times \cdots \times \R^{q_p} & \to & \R^d\\
  (k_1,\ldots,k_p) &
  \mapsto & 
  \sum_{r=1}^p L_r k_r .\\
  \end{array} \right.
\]
Actually, the result is a direct corollary of the previous single-block estimates and the convolution stability since it can easily be seen that 
$$
\mu = \nu_1 \star \cdots \star \nu_p,
$$
where for all $1\leq r \leq p$, 
\[
\nu_{r}
:=
(L_r)_{\#}
\left(
\kappa_{q_r,m_r}(k_r)\,dk_r
\right).
\]
This ends the proof of Lemma~\ref{lem:dyadic-admissible}.
\end{proof}

\begin{definition}[Domination of a Fourier transform]\label{def:domination}
Let $b\in\Sscr'(\R^d)$.  We say that $\widehat b$ is dominated by an admissible measure $\mu$ if and only if
\[
        |\langle\widehat b,\varphi\rangle_{\mathcal S', \mathcal S}|
        \le \int_{\R^d}|\varphi(\xi)|\dd\mu(\xi),
        \qquad \forall \varphi\in C_c^\infty(\R^d).
\]
\end{definition}

\begin{lemma}[Domination gives a measure]\label{lem:riesz}
If $\widehat b$ is dominated by a positive Radon measure $\mu$, then $\widehat b$ is a complex Radon measure $\nu$ satisfying $|\nu|\le\mu$.
\end{lemma}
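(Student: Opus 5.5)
The proof is a Riesz representation argument. We define the linear functional
\[
\Lambda(\varphi)=\langle \widehat b,\varphi\rangle_{\mathcal S',\mathcal S},\qquad \varphi\in C_c^\infty(\R^d),
\]
and extend it by density from $C_c^\infty(\R^d)$ to $C_c(\R^d)$.

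\emph{Continuity.} For every compact $K\subset\R^d$ and every $\varphi\in C_c^\infty(\R^d)$ with $\operatorname{supp}\varphi\subset K$, the domination hypothesis gives
\[
|\Lambda(\varphi)|\le \int|\varphi|\dd\mu\le \mu(K)\,\|\varphi\|_\infty .
\]
Here $\mu(K)<\infty$ because $\mu$ is Radon; for admissible measures this is Lemma~\ref{lem:dyadic-admissible}.

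\emph{Extension.} Fix $\varphi\in C_c(\R^d)$ supported in $K$. Mollification produces $\varphi_n\in C_c^\infty(\R^d)$, all supported in a fixed compact neighbourhood $K'$ of $K$, with $\varphi_n\to\varphi$ uniformly. The sequence $\Lambda(\varphi_n)$ is then Cauchy, since $|\Lambda(\varphi_n)-\Lambda(\varphi_m)|\le \mu(K')\|\varphi_n-\varphi_m\|_\infty$. Its limit defines $\Lambda(\varphi)$, independently of the approximating sequence. This extension is a linear functional on $C_c(\R^d)$ that is continuous for the inductive-limit topology.

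\emph{Identification of the measure.} Passing to the limit in the domination inequality gives
\[
|\Lambda(\varphi)|\le\int|\varphi|\dd\mu \qquad\text{for all }\varphi\in C_c(\R^d).
\]
By the Riesz--Markov theorem for complex functionals on $C_c(\R^d)$, there is a unique complex Radon measure $\nu$ with $\Lambda(\varphi)=\int\varphi\dd\nu$. Since $\Lambda$ coincides with $\widehat b$ on $C_c^\infty(\R^d)$, and test functions determine distributions, $\widehat b=\nu$ as distributions.

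\emph{Total variation bound.} The one point that needs care is $|\nu|\le\mu$. I would use the variational formula for the total variation on open sets:
\[
|\nu|(U)=\sup\Bigl\{\Bigl|\int\varphi\dd\nu\Bigr| : \varphi\in C_c(U),\ \|\varphi\|_\infty\le1\Bigr\}.
\]
Each such $\varphi$ satisfies $\bigl|\int\varphi\dd\nu\bigr|\le\int|\varphi|\dd\mu\le\mu(U)$, so $|\nu|(U)\le\mu(U)$ for every open $U$. Outer regularity of both Radon measures then gives $|\nu|(A)\le\mu(A)$ for all Borel sets $A$.

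\emph{Main step.} The only delicate part is justifying the supremum formula for $|\nu|(U)$. I would prove it via the polar decomposition $\dd\nu=h\dd|\nu|$ with $|h|=1$, together with Lusin's theorem: approximate $\overline h\,\mathbf 1_{K}$ by continuous functions bounded by $1$ for compact $K\subset U$, then use inner regularity of $|\nu|$.

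No growth condition on $\mu$ at infinity is required. The identity $\widehat b=\nu$ holds in $\mathcal D'(\R^d)$, which suffices here because $\widehat b$ was already a tempered distribution.
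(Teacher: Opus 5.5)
Your proof is correct and follows the paper's route: you bound the functional on test functions supported in a compact $K$ by $\mu(K)\|\varphi\|_\infty$, extend it by uniform density to compactly supported continuous functions, and apply Riesz--Markov. The differences are minor: you use the $C_c(\R^d)$ (locally finite) form of Riesz--Markov directly instead of representing on each compact and patching over an exhaustion, and you actually justify $|\nu|\le\mu$ through the variational formula on open sets plus outer regularity, a step the paper only asserts.
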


\begin{proof}
On each compact $K \subset \R^d$, the functional $\varphi\mapsto\langle\widehat b,\varphi\rangle_{\mathcal S', \mathcal S}$ is bounded on $C_c^\infty(K)$ with respect to the uniform norm by $\mu(K)$. It can be uniquely extended by continuity to $C_c(K)$.  By the Riesz-Markov theorem, this extension is equal to integration against a complex Borel measure $\nu_K$ on $\R^d$ with $|\nu_K|\le\mu|_K$.  The measures $\nu_K$ agree on intersections by uniqueness.  Patching over an exhaustion of $\R^d$ gives a complex Radon measure $\nu$ with $|\nu|\le\mu$.
\end{proof}

\subsection{Elementary Fourier estimates for coefficient blocks}

\begin{lemma}[Localized homogeneous-logarithmic Fourier decay]
\label{lem:homogeneous-log-decay}
Let \(q\ge 1\), let \(\lambda>-q\), and let
\(a\in C^\infty(\mathbb S^{q-1})\). Let
\(\chi_0\in C^\infty_c(\mathbb R^q)\) be equal to one in a neighbourhood
of the origin. For \(\ell\in\mathbb N\), we define
\[
\forall y \in \R^q, \qquad   h(y)=\chi_0(y)|y|^\lambda
  a\!\left(\frac{y}{|y|}\right)
  \bigl(\log |y|\bigr)^\ell.
\]
Then, there exists a constant $C>0$ such that for all $k\in \R^q$, 
\begin{equation}\label{eq:esthh}
   |\widehat h(k)|
   \le
   C\langle k\rangle^{-q-\lambda}
   \bigl(1+\log\langle k\rangle\bigr)^\ell.
\end{equation}
\end{lemma}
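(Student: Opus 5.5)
Since \(h\) is compactly supported and \(|y|^\lambda|\log|y||^\ell\) is integrable near \(0\) (because \(\lambda>-q\)), we have \(h\in L^1(\R^q)\). Hence \(\widehat h\) is bounded and continuous, and the estimate \eqref{eq:esthh} is trivial for \(|k|\le 2\). For \(|k|\ge 2\) we use a Littlewood--Paley decomposition in \(y\) adapted to the scale \(|k|^{-1}\). Fix \(\theta\in C_c^\infty(\R^q)\) equal to one on \(B(0,1)\) and supported in \(B(0,2)\), and split
\[
h=\theta(|k|y)\,h+\bigl(1-\theta(|k|y)\bigr)h=:h_0+h_1 .
\]

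The near part \(h_0\) is estimated directly in \(L^1\). With \(R=|k|\),
\[
|\widehat{h_0}(k)|\le\int_{|y|\le 2/R}|y|^\lambda|\log|y||^\ell\,dy\le C R^{-q-\lambda}(1+\log R)^\ell,
\]
using polar coordinates and \(\lambda+q>0\).

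For the far part \(h_1\), which is smooth, we integrate by parts \(M\) times using
\[
e^{-iy\cdot k}=\bigl(i k\cdot\nabla_y/|k|^2\bigr)e^{-iy\cdot k},
\]
with \(M>q+\lambda\). This gives
\[
|\widehat{h_1}(k)|\le |k|^{-M}\int|\nabla^M h_1|\,dy .
\]
Write \(\eta(y)=1-\theta(|k|y)\). By Leibniz, each term of \(\nabla^M h_1\) contains \(\nabla^j\) of \(\eta\) times \(\nabla^{M-j}\) of the homogeneous-logarithmic factor.

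1. The derivatives of \(|y|^\lambda a(y/|y|)(\log|y|)^\ell\) satisfy
\[
\bigl|\nabla^{i}\bigl[|y|^\lambda a(y/|y|)(\log|y|)^\ell\bigr]\bigr|\le C|y|^{\lambda-i}(1+|\log|y||)^\ell
\]
for \(|y|\le 1\), by homogeneity.

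2. Derivatives of \(\chi_0\) vanish near the origin, so they only contribute a bounded, compactly supported term.

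3. \(|\nabla^j\eta|\le C R^j\), and for \(j\ge1\) it is supported in \(\{|y|\sim R^{-1}\}\), where \(R^j\sim|y|^{-j}\).

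Hence
\[
|\nabla^M h_1|\le C|y|^{\lambda-M}(1+|\log|y||)^\ell \quad\text{on } \{|y|\ge 1/R\}\cap\operatorname{supp}\chi_0,
\]
up to an \(O(1)\) term supported away from the origin. Integrating, since \(\lambda-M+q<0\),
\[
\int_{1/R\le|y|\le C}|y|^{\lambda-M}(1+|\log|y||)^\ell\,dy\le C R^{M-\lambda-q}(1+\log R)^\ell .
\]
Multiplying by \(R^{-M}\) gives \(CR^{-q-\lambda}(1+\log R)^\ell\). The \(O(1)\) term gives \(R^{-M}\), which is smaller.

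Combining the bounds on \(h_0\) and \(h_1\) yields \eqref{eq:esthh}. The main point needing care is the bookkeeping in the Leibniz expansion. First, the cutoff derivatives must be converted into powers of \(|y|^{-1}\) on their support. Second, the logarithm must be kept to power \(\ell\) throughout: differentiating \((\log|y|)^\ell\) only produces lower powers multiplied by \(|y|^{-1}\), and these are dominated by \((1+|\log|y||)^\ell\). Finally, replacing \(|\log|y||\) by \(\log R\) on \(|y|\ge 1/R\), and the convergence of the radial integral, both rely on \(M>q+\lambda\).
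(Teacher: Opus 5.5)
Your proof is correct, and it takes a genuinely different route from the paper's, though the two are closely related.

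\textbf{The paper's argument.} It first discards the smooth part $(1-\eta)h\in C_c^\infty$. It then writes $\eta h=\sum_{n\ge0}h_n$ using a full dyadic partition of unity $\theta(2^n y)$ and rescales each piece to the fixed annulus $\{1/2\le|z|\le2\}$. This gives the per-scale bound $|\widehat h_n(k)|\le C_N2^{-n(q+\lambda)}(1+n)^\ell(1+2^{-n}|k|)^{-N}$. The factor $(1+n)^\ell$ comes from $\log|2^{-n}z|=\log|z|-n\log2$. Finally it sums the series separately over $n\le J$ and $n>J$, where $2^J\le|k|<2^{J+1}$.

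\textbf{How your argument corresponds.} Your single cutoff $\theta(|k|y)$ at the critical scale $|y|\sim|k|^{-1}$ is the continuous analogue of that split.
\begin{itemize}
\item Your $h_0$ plays the role of the pieces with $n>J$. Both arguments bound these crudely in $L^1$, using only $\lambda+q>0$.
\item Your $h_1$ plays the role of the pieces with $n\le J$. You integrate by parts $M>q+\lambda$ times once, globally. The paper's geometric sum $\sum_{n\le J}2^{-n\beta}2^{-N(J-n)}$ is thus replaced by the radial integral $\int_{1/R}^{C}r^{\lambda-M+q-1}\,dr$.
\end{itemize}

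\textbf{What each approach buys.} Your version is shorter and more elementary. It avoids constructing the partition of unity and doing the two-region dyadic summation. The price is the Leibniz bookkeeping with a $k$-dependent cutoff. You handle this correctly by noting that $\nabla^j[\theta(|k|\,\cdot)]$, for $j\ge1$, lives where $|k|\sim|y|^{-1}$, and by treating the terms where derivatives fall on $\chi_0$ as a $k$-uniform $O(1)$ contribution. The paper's version packages the scaling structure into a modular per-scale estimate. This makes the origin of the logarithmic loss $(1+\log\langle k\rangle)^\ell$ transparent: it is simply $(1+n)^\ell$ summed up to $n\approx J$.

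\textbf{A minor point of phrasing.} In your last sentence, the bound $(1+|\log|y||)^\ell\le C(1+\log R)^\ell$ on $\{|y|\ge1/R\}\cap\operatorname{supp}\chi_0$ uses only $|y|\ge1/R$ and the bounded support. Only the convergence of the radial integral actually needs $M>q+\lambda$.
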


\begin{proof} Let us first point out that $h\in L^1(\R^q)$. Thus, $\widehat{h}\in L^\infty(\R^q) \cap C^0(\R^q)$.

Choose \(\varepsilon>0\) such that \(\chi_0=1\) on \(B(0,2\varepsilon)\).
Let \(\eta\in C^\infty_C(B(0,\varepsilon))\) be equal to one in a
neighbourhood of the origin. Then \((1-\eta)h\in C^\infty_c(\mathbb R^q)\),
so it remains to study of the Fourier transform of \(\eta h\).

We can then choose \(\theta\in C^\infty_c(\{y\in \R^q: \; 1/2\le |y|\le 2\})\) such that
\[
  \sum_{n\ge 0}\theta(2^ny)=1,
  \qquad \forall y \in \R^q \mbox{ s.t. }0<|y|\le \varepsilon .
\]
Such a function \(\theta\) is obtained as follows. Let
\(\varphi\in C^\infty_c(\mathbb R^q)\) be radial, with
\(0\le \varphi\le1\), \(\varphi=1\) on \(\{y\in \R^q:\; |y|\le1\}\), and
\(\varphi=0\) on \(\{y\in \R^q: \; |y|\ge2\}\). Set
\[
  \theta(y)=\varphi(y)-\varphi(2y).
\]
Then \(\theta\in C^\infty_c(\{y \in \R^q: \; 1/2\le |y|\le2\})\). Moreover, for all $y\in \R^q\setminus\{0\}$ and all $N\in \mathbb{N}$, 
\[
  \sum_{n=0}^N\theta(2^ny)
  =
  \varphi(y)-\varphi(2^{N+1}y).
\]
Hence, if \(|y|\le1\), letting $N$ go to infinity gives
\[
  \sum_{n\ge0}\theta(2^n y)=1.
\]
After rescaling, the same construction applies on the set
\(\{y \in \R^q: \; 0<|y|\le\varepsilon\}\).

\medskip

As a consequence, we have for all $y\in \R^q$, 
\[
  \eta(y)h(y)=\sum_{n\ge0} h_n(y),
\]
where
\[
  h_n(y)
  =\eta(y)\theta(2^ny)|y|^\lambda
    a\!\left(\frac{y}{|y|}\right)(\log |y|)^\ell .
\]
By rescaling
\(y=2^{-n}z\) and integrating by parts on the fixed annulus
\(\{z\in \R^q: \; 1/2\le |z|\le 2\}\), we obtain, that, for every \(N\in \N\), there exists a constant $C_N>0$ such that for all $k\in \R^q$ and all $n\in \N$, 
\begin{equation}\label{eq:esthn}
  |\widehat h_n(k)|
  \le C_N\,2^{-n(q+\lambda)}(1+n)^\ell
        \bigl(1+2^{-n}|k|\bigr)^{-N}.
\end{equation}
Indeed, set \(y=2^{-n}z\). Then
\[
\widehat h_n(k)
=
2^{-n(q+\lambda)}
\widehat H_n(2^{-n}k),
\]
where
\[
H_n(z)
=
\eta(2^{-n}z)\theta(z)|z|^\lambda
a\!\left(\frac{z}{|z|}\right)
\bigl(\log |2^{-n}z|\bigr)^\ell .
\]
The functions \(H_n\) are supported in the fixed annulus
\(\{z\in \R^q: 1/2\le |z|\le2\}\). Moreover, for every multi-index \(\beta\in \N^q\), there exists a constant $C_\beta >0$ such that for all $n\in \N$, 
\[
\|\partial_z^\beta H_n\|_{L^\infty}
\le
C_\beta(1+n)^\ell .
\]
The only \(n\)-growth comes from the factor
\[
\log |2^{-n}z|=\log |z|-n\log 2,
\]
while all derivatives of \(\log |z|\) are bounded on the fixed annulus.
Therefore, by integrating by parts \(N\) times in the Fourier integral for
\(\widehat H_n\), we get that there exists a constant $C_N>0$ such that for all $\xi \in \R^q$ and all $n\in \N$, 
\[
|\widehat H_n(\xi)|
\le
C_N(1+n)^\ell(1+|\xi|)^{-N}.
\]
Taking \(\xi=2^{-n}k\), this yields~\eqref{eq:esthn}.

\medskip

Let now $k\in \R^q$ such that $|k|\geq 1$ and let \(J\in\mathbb N\) be such that  $2^J\le |K|<2^{J+1}$. We split the dyadic sum into the two regions \(n\le J\) and \(n>J\). We denote by \(\beta=q+\lambda>0\). From the estimate \eqref{eq:esthn}, we get
\[
  |\widehat h(k)|
  \le
  C_N
  \sum_{n\ge0}
  2^{-n\beta}(1+n)^\ell
  \bigl(1+2^{-n}|k|\bigr)^{-N}.
\]

We first consider the terms \(0\le n\le J\). Since
\(2^J\le |k|\), we have $2^{-n}|k|\ge 2^{J-n}$.Hence
\[
  \bigl(1+2^{-n}|K|\bigr)^{-N}
  \le
  C\,2^{-N(J-n)}.
\]
Therefore
\[
\begin{aligned}
  \sum_{0\le n\le J}
  2^{-n\beta}(1+n)^\ell
  \bigl(1+2^{-n}|K|\bigr)^{-N}
  &\le
  C
  \sum_{0\le n\le J}
  2^{-n\beta}(1+n)^\ell
  2^{-N(J-n)}      \\
  &=
  C
  2^{-J\beta}
  \sum_{0\le n\le J}
  2^{-(N-\beta)(J-n)}
  (1+n)^\ell .
\end{aligned}
\]
We choose \(N>\beta\). Then, after the change of
variables \(m=J-n\),
\[
\begin{aligned}
  \sum_{0\le n\le J}
  2^{-(N-\beta)(J-n)}
  (1+n)^\ell
  &=
  \sum_{m=0}^J
  2^{-(N-\beta)m}
  (1+J-m)^\ell        \\
  &\le
  (1+J)^\ell
  \sum_{m=0}^\infty
  2^{-(N-\beta)m}
  \le
  C_{\beta,N}(1+J)^\ell .
\end{aligned}
\]
Thus
\[
  \sum_{0\le n\le J}
  2^{-n\beta}(1+n)^\ell
  \bigl(1+2^{-n}|k|\bigr)^{-N}
  \le
  C\,2^{-J\beta}(1+J)^\ell .
\]

We now consider the terms \(n>J\). In this region we simply use
$\bigl(1+2^{-n}|k|\bigr)^{-N}\le 1$.
Hence
\[
\begin{aligned}
  \sum_{n>J}
  2^{-n\beta}(1+n)^\ell
  \bigl(1+2^{-n}|k|\bigr)^{-N}
  &\le
  \sum_{n>J}
  2^{-n\beta}(1+n)^\ell       \\
  &=
  2^{-J\beta}
  \sum_{m\ge1}
  2^{-m\beta}(1+J+m)^\ell ,
\end{aligned}
\]
where \(n=J+m\). Since $1+J+m\le (1+J)(1+m)$, we obtain
\[
\begin{aligned}
  \sum_{n>J}
  2^{-n\beta}(1+n)^\ell
  \bigl(1+2^{-n}|k|\bigr)^{-N}
  &\le
  2^{-J\beta}(1+J)^\ell
  \sum_{m\ge1}
  2^{-m\beta}(1+m)^\ell      \\
  &\le
  C_{\beta,\ell}\,2^{-J\beta}(1+J)^\ell ,
\end{aligned}
\]
because \(\beta>0\).

Combining the two estimates gives
\[
  |\widehat h(k)|
  \le
  C\,2^{-J\beta}(1+J)^\ell .
\]
Finally, since \(2^J\le |k|<2^{J+1}\), we have $2^{-J\beta}\le C_\beta |k|^{-\beta}$ and $1+J\le C(1+\log |k|)$.

Consequently,
\[
  |\widehat h(k)|
  \le
  C
  |k|^{-\beta}
  (1+\log |k|)^\ell .
\]
Since \(\beta=q+\lambda\), this gives
\[
  |\widehat h(k)|
  \le
  C
  |k|^{-q-\lambda}
  (1+\log |k|)^\ell,
\]
for all $k\in \R^q$ such that $|k|\geq 1$. Replacing \(|k|\) by \(\langle k\rangle\) in the right-hand side and using the fact that $\widehat{h}$ is bounded gives~\eqref{eq:esthh} for all $k\in \R^q$, up to increasing the constant $C$. 
\end{proof}

\begin{lemma}[Localized angular block]
\label{lem:angular}
Let \(q=3\), $j=1,2,3$, and for all $y= (y_1,y_2,y_3)\in \R^3$,
\[
   a_j(y)=\chi(|y|)\frac{y_j}{|y|}.
\]
Then \(\widehat a_j\in L^\infty(\R^3)\) and there exists $C>0$ such that for all $k\in \R^3$, 
\begin{equation}\label{eq:estaj}
   |\widehat a_j(k)|\le C\langle k\rangle^{-3} .
\end{equation}
Hence \(\widehat a_j\) is dominated by \(C\kappa_{3,0}(k)\,dk\).
\end{lemma}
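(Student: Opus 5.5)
The plan is to obtain Lemma~\ref{lem:angular} as a direct application of Lemma~\ref{lem:homogeneous-log-decay}. I take $q=3$, $\lambda=0$, $\ell=0$, the smooth angular function $a(\omega)=\omega_j$ on $\mathbb S^2$, and the cut-off $\chi_0(y)=\chi(|y|)$. Since $\chi\in C_c^\infty([0,\infty))$ equals $1$ on $[0,1]$, the function $y\mapsto\chi(|y|)$ is smooth on $\R^3$: it is constant near the origin and equal to a smooth function of $|y|$ away from it. It is compactly supported and equal to one on $B(0,1)$, so the hypotheses of Lemma~\ref{lem:homogeneous-log-decay} hold. The condition $\lambda=0>-3$ is satisfied. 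With these choices $h=a_j$ exactly, and \eqref{eq:esthh} gives $|\widehat a_j(k)|\le C\langle k\rangle^{-3-0}(1+\log\langle k\rangle)^0=C\langle k\rangle^{-3}$, which is \eqref{eq:estaj}.

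Boundedness of $\widehat a_j$ is immediate from a separate observation. The function $a_j$ is bounded by $1$ and supported in $\overline{B(0,2)}$, so $a_j\in L^1(\R^3)$ and $\|\widehat a_j\|_{L^\infty}\le\|a_j\|_{L^1}$.

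For the domination statement, I will compare $\langle k\rangle^{-3}$ with $\kappa_{3,0}$. For $|k|>1$ we have $\kappa_{3,0}(k)=\langle k\rangle^{-3}$ exactly. For $|k|\le1$ we have $\kappa_{3,0}(k)=|k|^{-2}\ge1\ge\langle k\rangle^{-3}$. Hence $|\widehat a_j|\le C\kappa_{3,0}$ pointwise. Since $\widehat a_j$ is a bounded continuous function, for every $\varphi\in C_c^\infty(\R^3)$ we get $|\langle\widehat a_j,\varphi\rangle|=\bigl|\int\widehat a_j\varphi\bigr|\le\int|\varphi|\,C\kappa_{3,0}\dd k$. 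This is Definition~\ref{def:domination} with the single-block admissible measure $C\kappa_{3,0}(k)\dd k$, where $L$ is the identity on $\R^3$.

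There is essentially no obstacle. The only point needing a moment's care is checking that $\chi(|\cdot|)$ is a legitimate $C_c^\infty(\R^3)$ cut-off equal to one near the origin, which holds because $\chi$ is constant on $[0,1]$. One could also verify the decay directly through the dyadic decomposition in the proof of Lemma~\ref{lem:homogeneous-log-decay}, with $\beta=q+\lambda=3$, but invoking that lemma suffices.
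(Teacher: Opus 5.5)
Your proposal is correct and follows essentially the paper's route: both obtain the decay from Lemma~\ref{lem:homogeneous-log-decay} with $q=3$, $\lambda=0$, $\ell=0$ and angular factor $\omega_j$. The paper first splits off $(1-\eta)a_j\in C_c^\infty(\R^3)$ using a cut-off $\eta$ supported where $\chi=1$, whereas you take $\chi_0=\chi(|\cdot|)$ directly, which makes that split unnecessary; your pointwise comparison $\langle k\rangle^{-3}\le\kappa_{3,0}(k)$ also makes explicit the domination step the paper leaves implicit.
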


\begin{proof}
Let $j=1,2,3$. Since \(a_j\) is bounded and compactly supported, \(a_j\in L^1(\mathbb R^3)\).
Hence \(\widehat a_j\) is bounded and continuous.

Let \(\eta\in C^\infty_c(\mathbb R^3)\) be equal to one in a neighbourhood
of the origin and supported in the set where \(\chi=1\). Then, for all $y = (y_1, y_2, y_3)\in \R^3$, 
\[
  \eta(y)a_j(y)
  = \eta(y)\frac{y_j}{|y|}
\]
is a localized homogeneous term of degree \(0\), with smooth angular factor. Lemma~\ref{lem:homogeneous-log-decay},
applied with \(q=3\), \(\lambda=0\), and \(\ell=0\), gives the existence of a constant $C>0$ such that for all $k\in \R^3$, 
\[
  |\widehat{\eta a_j}(k)|\le C\langle k\rangle^{-3}.
\]
On the other hand, \((1-\eta)a_j\in C^\infty_c(\mathbb R^3)\). Its Fourier transform thus belongs to $\mathcal S(\R^3)$. Combining the two estimates yields~\eqref{eq:estaj}, which proves the
domination by \(C\kappa_{3,0}(k)\,dk\).
\end{proof}

\begin{lemma}[Long-range Coulomb remainder]\label{lem:longrange}
For all $y\in \R^3$, let
\[
        r(y)=\frac{1-\chi(|y|)}{|y|}.
\]
Then, $\widehat r$ is dominated by $C\kappa_{3,0}(k)\dd k$.
\end{lemma}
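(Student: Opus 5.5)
We need to show that \(\widehat r\) is dominated by \(C\kappa_{3,0}(k)\,dk\). The plan is to write \(r\) as the difference of the full Coulomb potential and its localized part,
\[
r(y)=\frac{1}{|y|}-\frac{\chi(|y|)}{|y|},
\]
and to bound the Fourier transform of each piece separately, one at low frequencies and one at high frequencies.

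The first piece is explicit. With the convention \eqref{eq:fourier-convention} in dimension three, the tempered distribution \(1/|y|\) has Fourier transform \(4\pi|k|^{-2}\). This is locally integrable, so it is the function \(4\pi|k|^{-2}\).

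The second piece is \(h(y)=\chi(|y|)|y|^{-1}\). It is of the form treated in Lemma~\ref{lem:homogeneous-log-decay}, with \(q=3\), \(\lambda=-1>-3\), \(a\equiv1\), \(\ell=0\) and \(\chi_0=\chi(|\cdot|)\). Note that \(\chi_0\) is smooth because \(\chi\) is constant near \(0\), and it equals one near the origin. The lemma gives \(|\widehat h(k)|\le C\ang k^{-2}\), and \(\widehat h\) is bounded and continuous since \(h\in L^1\). Hence \(\widehat r=4\pi|k|^{-2}-\widehat h\) is a locally integrable function. On \(\{|k|\le1\}\) we immediately get \(|\widehat r(k)|\le 4\pi|k|^{-2}+C\le C'|k|^{-2}\), which matches \(\kappa_{3,0}\) there.

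The high-frequency bound on \(\{|k|>1\}\) is the main obstacle, because the split above only gives \(|k|^{-2}\) there, while \(\kappa_{3,0}\) requires \(\ang k^{-3}\). The plan is to use instead that \(r\) is smooth. Since \(1-\chi=0\) near the origin, \(r\in C^\infty(\R^3)\), and every derivative satisfies \(|\partial^\beta r(y)|\le C_\beta\ang y^{-1-|\beta|}\).

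For \(|\beta|\ge3\), the derivative \(\partial^\beta r\) is therefore in \(L^1(\R^3)\). Distributionally, \((ik)^\beta\widehat r=\widehat{\partial^\beta r}\), which is bounded. Summing over \(|\beta|=3\) gives \(|k|^3|\widehat r(k)|\le C\) away from the origin. Here one should check that \(\widehat r\) is a genuine function coinciding with \(\widehat{\partial^\beta r}/(ik)^\beta\) on \(k\neq0\). This follows because \(\widehat r\) is already known to be locally integrable, and a distribution identity \(k^\beta T=g\) with \(T\in L^1_{\rm loc}\) and \(g\) continuous determines \(T\) pointwise a.e. on \(\{k\neq0\}\).

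Combining the two regions yields \(|\widehat r(k)|\le C\kappa_{3,0}(k)\) for a.e. \(k\). This gives the domination in the sense of Definition~\ref{def:domination}.
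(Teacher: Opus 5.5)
Your proof is correct and follows essentially the same route as the paper: the splitting $r=|y|^{-1}-\chi(|y|)|y|^{-1}$ gives local integrability of $\widehat r$ and the $|k|^{-2}$ bound near the origin, and the smoothness of $r$ together with a derivative identity on the Fourier side gives the decay at infinity. The only difference is the high-frequency step. The paper uses the harmonicity of $|y|^{-1}$ away from the origin to get $\Delta r\in C^\infty_c(\R^3)$, hence rapid decay of $\widehat r$, whereas you use $\partial^\beta r\in L^1(\R^3)$ for $|\beta|=3$, which gives exactly the $\langle k\rangle^{-3}$ decay required by $\kappa_{3,0}$; your a.e.\ identification of $\widehat r$ away from $k=0$ is a useful bit of extra care.
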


\begin{proof}
Since \(\chi=1\) near the origin, \(r\) vanishes in a neighbourhood of
\(0\). Since \(\chi(y)=0\) as soon as $|y|\geq 2$, one has \(r(y)=|y|^{-1}\)
outside $B(0,2)$. Thus \(r\in C^\infty(\mathbb R^3)\), and, because
\(|y|^{-1}\) is harmonic away from the origin, $\Delta r\in C^\infty_c(\mathbb R^3)$. Thus \(\widehat{\Delta r}\) is a Schwartz function, 

We write
\[
  r(y)=\frac1{|y|}-\frac{\chi(|y|)}{|y|}.
\]
The Fourier transform of \(|y|^{-1}\) is a constant multiple of
\(|k|^{-2}\), while \(\chi(|y|)|y|^{-1}\in L^1(\mathbb R^3)\) has a
bounded Fourier transform. Thus, $\widehat{r} \in L^1_{\rm loc}(\R^3)$. In addition, for  all \(k\neq0\),
\[
  -|k|^2\widehat r(k)=\widehat{\Delta r}(k).
\]
As a consequence, \(\widehat r(k)\)
decays faster than any power as \(|k|\to\infty\).
Combining this rapid high-frequency decay with the low-frequency
estimate gives domination by
\(C\kappa_{3,0}(k)\,dk\).
\end{proof}

\begin{lemma}[Localized logarithmic blocks]
\label{lem:log-blocks}
Let \(Y=(y,z)\in\mathbb R^3\times\mathbb R^3\), let
\[
   N(Y)=\sqrt{|y|^2+|z|^2},\qquad P(Y)=y\cdot z,
\]
and set
\[
   G(Y)=\chi(|y|)\chi(|z|)P(Y)\log N(Y)^2.
\]
Then the Fourier transforms of
\(\partial_{Y_j}G\), \(1\le j\le 6\), and of \(\Delta_YG\), are dominated by
\[
   C\kappa_{6,m}(K)\,dK
\]
for some integer \(m\in \N\). Moreover,
\[
   \nabla_YG\in (L^\infty(\mathbb R^6))^6,
   \qquad
   \Delta_YG\in L^\infty(\mathbb R^6).
\]
\end{lemma}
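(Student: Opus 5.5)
The plan is to reduce everything to Lemma~\ref{lem:homogeneous-log-decay}, applied in dimension \(q=6\), near the origin \(Y=0\), plus smoothness away from it. The function \(G\) is smooth wherever \(N(Y)\neq0\) and \(\chi(|y|),\chi(|z|)\) are smooth. Here \(\chi(|\cdot|)\) is smooth on \(\R^3\) because \(\chi\) is constant near \(0\). Also, \(G\) is compactly supported in \(\{|y|\le2,|z|\le2\}\).

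I would fix a radial cutoff \(\eta\in C_c^\infty(\R^6)\), equal to one near \(0\) and supported in \(\{N<1\}\). On that set \(\chi(|y|)=\chi(|z|)=1\), so \(\eta G=\eta\,P\log N^2\). The remainder \((1-\eta)G\) belongs to \(C_c^\infty(\R^6)\). Its derivatives therefore have Schwartz Fourier transforms, which are trivially dominated by \(C\kappa_{6,0}\).

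The core step is to compute the derivatives of \(P\log N^2\) explicitly:
\[
\partial_{y_j}(P\log N^2)=z_j\log N^2+\frac{2P\,y_j}{N^2},
\qquad
\partial_{z_j}(P\log N^2)=y_j\log N^2+\frac{2P\,z_j}{N^2}.
\]
A short computation, using \(\Delta P=0\), \(\nabla P\cdot\nabla N^2=4P\) and \(\Delta\log N^2=8/N^2\) in \(\R^6\), should give \(\Delta_Y(P\log N^2)=c\,P/N^2\) for an explicit constant \(c\) (I expect \(c=16\)). Every resulting term has the form \(|Y|^\lambda a(Y/|Y|)(\log|Y|)^\ell\) with \(a\in C^\infty(\mathbb S^5)\) and \(\ell\in\{0,1\}\). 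For the first derivatives, \(\lambda=1\) (terms like \(z_j\log N^2 = 2|Y|\,(z_j/|Y|)\log|Y|\), and \(2Py_j/N^2\) is degree one homogeneous). For the Laplacian, \(\lambda=0\) with \(\ell=0\). Products of \(\eta\) with derivatives of \(\chi\) do not arise inside \(\{\eta\neq 0\}\). The remaining cross terms, in which a derivative falls on \(\eta\), are supported away from \(0\) and are smooth, so they go into the Schwartz part.

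Lemma~\ref{lem:homogeneous-log-decay} then gives \(|\widehat{\eta\partial_jG}(K)|\le C\ang K^{-7}(1+\log\ang K)\) and \(|\widehat{\eta\Delta G}(K)|\le C\ang K^{-6}\). Some care is needed with the cutoff \(\chi_0\) in that lemma: I would take \(\chi_0=\eta\) and interpret \(\partial_j(\eta G)-(\partial_j\eta)G\) accordingly. Both bounds are \(\le C\kappa_{6,1}(K)\) for \(|K|>1\), and they are bounded for \(|K|\le1\), where \(\kappa_{6,1}\ge1\). Hence domination holds with \(m=1\); the slack in the exponent \(-7\) versus \(-6\) is harmless.

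For boundedness, \(|z_j\log N^2|\le CN|\log N|\) is bounded near \(0\), and \(|P y_j|/N^2\le N\). Likewise \(|P|/N^2\le 1/2\), so \(\Delta G\) is bounded near the origin. Away from the origin \(G\) is smooth with compact support. One point needs checking: these are the distributional derivatives. This holds because \(G\in W^{1,1}_{\rm loc}\) with locally integrable classical second derivatives away from a point in \(\R^6\); the singular set has zero capacity, so no delta terms appear.

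The main obstacle I anticipate is precisely this justification, together with a correct application of Lemma~\ref{lem:homogeneous-log-decay} when the \(\log N^2\) factor is written as \(2\log|Y|\), so that the angular factors are genuinely smooth on \(\mathbb S^5\). I would verify the Laplacian identity carefully, since an error there could spuriously produce a \(\log\) term of degree \(0\). Such a term would still only yield the bound \(\ang K^{-6}\log\) and so would remain admissible.
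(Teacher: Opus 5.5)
Your proposal is correct and follows essentially the same route as the paper. You localize near $Y=0$, where $G=P\log N^2$, compute $\nabla_Y(P\log N^2)$ and $\Delta_Y(P\log N^2)=16\,P/N^2$ explicitly, and read off boundedness. You then apply Lemma~\ref{lem:homogeneous-log-decay} in dimension $6$ (degree $1$ with $\ell\le 1$ for the gradient, degree $0$ for the Laplacian), and treat all cut-off terms as $C^\infty_c$ remainders. Your additional check that the distributional derivatives carry no mass at the origin is a point the paper passes over in this lemma, and is worth keeping. It follows, e.g., from Green's formula on $\{|Y|>\varepsilon\}$, where the boundary terms are $O(\varepsilon^{6}|\log\varepsilon|)$.
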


\begin{proof}
It suffices to examine the joint singularity \(Y=0\), since all terms in
which a derivative falls on a cut-off are supported away from the joint
singularity and are \(C^\infty_c(\R^6)\). Near \(Y=0\),
\[
   G(Y)=P(Y)\log N(Y)^2 .
\]
Since \(P\) is homogeneous of degree \(2\),
\[
 \nabla_Y(P\log N^2)
 =
 (\nabla_YP)\log N^2
 +
 2P\frac{\cdot}{N^2}.
\]
Here \(\nabla_YP(Y)=O(|Y|)\) as $|Y|$ goes to $0$ so the first term is \(O(N(Y)|\log N(Y)|)\), and the
second term is \(O(N(Y))\). Hence \(\nabla_YG\) is bounded near the origin.

For the Laplacian, using \(\Delta_YP=0\), \(\nabla_YP=(z,y)\), and
\[
   \Delta_Y\log N^2=\frac{8}{N^2}
   \qquad\text{in dimension }6,
\]
we obtain
\[
\begin{aligned}
   \Delta_Y(P\log N^2)(Y)
   &=
   P(Y)\,\Delta_Y\log N(Y)^2
   +
   2\nabla_YP(Y)\cdot\nabla_Y\log N^2(Y)  \\
   &=
   8\frac{y\cdot z}{|y|^2+|z|^2}
   +
   4\frac{(z,y)\cdot(y,z)}{|y|^2+|z|^2}  \\
   &=
   16\frac{y\cdot z}{|y|^2+|z|^2}.
\end{aligned}
\]
This is bounded because
\[
   |y\cdot z|\le \frac12(|y|^2+|z|^2).
\]

The Fourier control follows from
Lemma~\ref{lem:homogeneous-log-decay}. Indeed, the first derivatives of
\(P\log N^2\) are finite sums of localized homogeneous-logarithmic terms of
degree \(1\) in dimension \(6\), and therefore have Fourier decay
\(\langle K\rangle^{-7}\) up to logarithmic factors. The Laplacian is a
localized homogeneous term of degree \(0\) in dimension \(6\), hence has
Fourier decay \(\langle K\rangle^{-6}\), again up to harmless logarithmic
factors. The low-frequency behaviour is harmless because all these functions
are locally integrable and compactly supported after cut-off.
\end{proof}

\begin{remark}\label{rem:log-second-derivatives}
Individual second derivatives of \(G\) are not necessarily bounded. For example,
mixed derivatives contain terms proportional to \(\log(|y|^2+|z|^2)\). Thus the
argument should not be interpreted as a \(C^{1,1}\) estimate on the logarithmic
three-body factor. The bounded quantity entering the conjugated equation is the
specific combination \(\Delta_YG\), not the full Hessian of \(G\).
\end{remark}

\begin{lemma}[Linear collision variables]\label{lem:linear} Let \(g\in\mathcal S'(\mathbb R^q)\) have Fourier transform dominated by a positive Radon measure \(\mu\) on \(\mathbb R^q\), and let \[ B:\mathbb R^d\longrightarrow \mathbb R^q \] be a surjective linear map. For \(a\in\mathbb R^q\), set \[ b(x)=g(Bx+a). \] Then \(\widehat b\) is dominated, up to a constant depending only on \(B\), by the push-forward of \(\mu\) under \(B^T\). 
\end{lemma}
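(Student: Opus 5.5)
The plan is to compute the Fourier transform of $b$ by duality, decomposing $\mathbb R^d$ along $\ker B$ and its orthogonal complement. Since $B$ is surjective, $B^T:\mathbb R^q\to\mathbb R^d$ is injective with range $(\ker B)^\perp$. We therefore fix an orthonormal splitting $x=x'+x''$ with $x'\in(\ker B)^\perp$ and $x''\in\ker B$. Then $b(x)=g(Bx'+a)$ depends only on $x'$, and $B|_{(\ker B)^\perp}$ is an isomorphism onto $\mathbb R^q$. Formally,
\[
\widehat b(\xi)=(2\pi)^{d-q}\,|\det B|_{(\ker B)^\perp}|^{-1}\,e^{i\eta\cdot a}\,\widehat g(\eta)\,\delta_{\ker B^{\perp\perp}}\quad\text{where }\xi=B^T\eta .
\]
In other words, $\widehat b$ is the push-forward under $B^T$ of $c_B\,e^{i\eta\cdot a}\widehat g(\eta)$. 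The factor $(2\pi)^{\dim\ker B}$ arises from integrating a constant in the $\ker B$ directions, and the Jacobian factor from the change of variables $u=Bx'+a$.

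To make this rigorous in $\mathcal S'$, take $\varphi\in C_c^\infty(\mathbb R^d)$ and compute
\[
\langle\widehat b,\varphi\rangle=\langle b,\widehat\varphi\rangle=\int g(Bx'+a)\Big(\int_{\ker B}\widehat\varphi(x'+x'')\,dx''\Big)dx' .
\]
This is to be read as the pairing of $g\circ(B\cdot+a)$ on $(\ker B)^\perp$ with the partial integral $\Phi(x')=\int_{\ker B}\widehat\varphi(x'+x'')dx''$. By Fourier slicing, $\Phi$ is a Schwartz function on $(\ker B)^\perp$ and equals $(2\pi)^{\dim\ker B}$ times the inverse-type Fourier transform of the restriction $\varphi|_{(\ker B)^\perp}$; more precisely, $\Phi=(2\pi)^{\dim\ker B}\,\mathcal F_{(\ker B)^\perp}(\varphi|_{(\ker B)^\perp})$. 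After the change of variables $u=Bx'+a$ we get $\langle g,\psi\rangle$, where
\[
\psi(u)=c_B\,\mathcal F_{(\ker B)^\perp}(\varphi|)\big(B|^{-1}(u-a)\big).
\]
This $\psi$ equals $\widehat{\tilde\varphi}$ with $\tilde\varphi(\eta)=c'_B\,e^{i\eta\cdot a}\varphi(B^T\eta)$, because transposition intertwines $B|^{-1}$ on the physical side with $B^T$ on the frequency side. Hence
\[
\langle\widehat b,\varphi\rangle=\langle\widehat g,\tilde\varphi\rangle,
\]
and $\tilde\varphi\in C_c^\infty(\mathbb R^q)$, since $B^T$ is injective and therefore proper. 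The domination hypothesis applied to $\tilde\varphi$ gives
\[
|\langle\widehat b,\varphi\rangle|\le |c'_B|\int|\varphi(B^T\eta)|\,d\mu(\eta)=|c'_B|\int|\varphi|\,d\big((B^T)_\#\mu\big).
\]
This is exactly the claimed domination, with constant $|c'_B|$ depending only on $B$.

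The main obstacle is the bookkeeping in the slicing identity: one must verify that $\Phi$ is Schwartz and correctly express it as a Fourier transform on the subspace, track the Jacobian constants, and make sure the translation by $a$ only produces a unimodular phase, so that it disappears in absolute value. A cleaner route, which I would try first, is to reduce to the case where $B=(I_q\ 0)$ after an orthogonal change of coordinates and composition with an invertible map of $\mathbb R^q$. In that case $b=g\otimes 1$, so $\widehat b=\widehat g\otimes(2\pi)^{d-q}\delta_0$ directly. Invertible linear maps and translations act on $\widehat g$ by composition with the transpose inverse, a Jacobian factor, and a phase. Each step preserves domination, with push-forward of the measure by the corresponding transposes.
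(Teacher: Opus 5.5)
Your proposal is correct and is essentially the paper's argument. The paper's proof is exactly your ``cleaner route'': coordinates with $y=Bx$ make $b$ independent of the complementary variable, so $\widehat b$ is $\widehat{g(\cdot+a)}$ tensored with a Dirac mass, the translation gives a phase, and one returns to the original variables via $B^T$. Your duality/slicing computation is a more explicit version of the same calculation, and it adds the useful check that $\varphi\circ B^T\in C_c^\infty(\mathbb R^q)$ because $B^T$ is injective (if you track the Jacobians they cancel, and the constant is exactly $(2\pi)^{d-q}$).
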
 

\begin{proof} 
Choose linear coordinates \(x=(y,z)\) such that \(y=Bx\). In these coordinates, \(b(x)=g(y+a)\) is independent of the tangent variable \(z\). Hence its Fourier transform is the tensor product of the Fourier transform of \(g(\cdot+a)\) in the \(y\)-variables with a Dirac mass in the dual \(z\)-variables. The translation \(a\) only produces a phase factor in Fourier variables. Returning to the original coordinates gives precisely the push-forward under \(B^T\). Taking total variations gives the claimed domination. \end{proof}

\begin{lemma}[Products of coefficient blocks]\label{lem:products}
Let
\[
  B_\ell:\mathbb R^d\longrightarrow\mathbb R^{q_\ell},
  \qquad \ell=1,2,
\]
be surjective linear maps, let \(a_\ell\in\mathbb R^{q_\ell}\), and set
\[
  b_\ell(x)=g_\ell(B_\ell x+a_\ell).
\]
Assume that \(b_\ell\in L^\infty(\mathbb R^d)\) and that \(\widehat{g_\ell}\) is dominated by a positive Radon measure \(\mu_\ell\) on \(\mathbb R^{q_\ell}\), for \(\ell=1,2\). Then the Fourier transform of \(b_1b_2\) is dominated, up to a
constant depending only on \(B_1,B_2\), by the
push-forward of \(\mu_1\otimes\mu_2\) under the map
\[
  (\eta_1,\eta_2)
  \longmapsto
  B_1^T\eta_1+B_2^T\eta_2 .
\]
In particular, products of bounded coefficient blocks whose Fourier transforms are
controlled by admissible measures are again controlled by admissible measures. 
\end{lemma}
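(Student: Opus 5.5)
The plan is to reduce to Lemma~\ref{lem:linear} and then use the convolution theorem on the Fourier side. By Lemma~\ref{lem:linear}, $\widehat{b_\ell}$ is dominated by $C_\ell\,(B_\ell^T)_\#\mu_\ell$, and by Lemma~\ref{lem:riesz} it is a complex Radon measure $\nu_\ell$ with $|\nu_\ell|\le C_\ell (B_\ell^T)_\#\mu_\ell$. Since $b_\ell\in L^\infty$, the product $b_1b_2$ is a well-defined tempered distribution (a bounded function). The formal identity to justify is
\[
\widehat{b_1b_2}=(2\pi)^{-d}\,\nu_1\star\nu_2 .
\]
Taking total variations then gives
\[
|\widehat{b_1b_2}|\le (2\pi)^{-d}C_1C_2\,(B_1^T)_\#\mu_1\star(B_2^T)_\#\mu_2 .
\]
The right-hand side is precisely the push-forward of $\mu_1\otimes\mu_2$ under $(\eta_1,\eta_2)\mapsto B_1^T\eta_1+B_2^T\eta_2$.

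The main obstacle is justifying the convolution identity, since the $\nu_\ell$ are infinite measures, not finite ones. To do this I would regularize. Take a Gaussian $\omega_\varepsilon(x)=e^{-\varepsilon|x|^2}$ and set $b_\ell^\varepsilon=\omega_\varepsilon b_\ell$. Then $\widehat{b_\ell^\varepsilon}=\widehat{\omega_\varepsilon}\star\nu_\ell$ is an $L^1\cap C^\infty$ function, because $b^\varepsilon_\ell\in L^1$ and $\widehat{\omega_\varepsilon}$ is a normalized Gaussian. Its modulus is bounded by $\widehat{\omega_\varepsilon}\star|\nu_\ell|$.

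For $L^1$ functions the convolution theorem holds, so $\widehat{b_1^\varepsilon b_2^\varepsilon}=(2\pi)^{-d}\widehat{b_1^\varepsilon}\star\widehat{b_2^\varepsilon}$. Pairing with $\varphi\in C_c^\infty$ and using Tonelli gives
\[
|\langle \widehat{b_1^\varepsilon b_2^\varepsilon},\varphi\rangle|\le (2\pi)^{-d}\int |\varphi|\,\dd\bigl(\widehat{\omega_\varepsilon}\star\widehat{\omega_\varepsilon}\star|\nu_1|\star|\nu_2|\bigr).
\]
Two limits are needed as $\varepsilon\to0$.

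First, the left side tends to $\langle\widehat{b_1b_2},\varphi\rangle$. Indeed, $\omega_\varepsilon^2 b_1b_2\to b_1b_2$ boundedly a.e., hence in $\mathcal S'$ by dominated convergence against $\widehat\varphi\in L^1$.

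Second, the right side tends to $\int|\varphi|\,\dd(|\nu_1|\star|\nu_2|)$. This requires $|\nu_1|\star|\nu_2|$ to be a locally finite measure. Here one uses that $\mu_\ell$ are admissible, so the pushed-forward product is admissible (injectivity of the relevant maps from surjectivity of $B_\ell$), and Lemma~\ref{lem:dyadic-admissible} gives local finiteness together with polynomially controlled annular masses. Those annular bounds also make the Gaussian mollification converge. Concretely, $\widehat{\omega_\varepsilon}\star\widehat{\omega_\varepsilon}$ is an approximate identity, the $\mu$-mass of annuli at distance $R$ grows at most like $R^{d}$ times logs, and Gaussian tails dominate this growth. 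By lower semicontinuity (or direct weak convergence) one obtains the inequality in the limit.

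Finally, for the "in particular" statement: if $\mu_\ell=\mathcal L_{(L^\ell_1,\dots)}\#(\otimes\kappa\,dk)$, then the push-forward of $\mu_1\otimes\mu_2$ is again of this form. The blocks are $B_\ell^TL^\ell_r$, which are injective since $B_\ell^T$ is injective ($B_\ell$ surjective). The result is therefore an admissible measure in the sense of Definition~\ref{def:admissible-measure}.
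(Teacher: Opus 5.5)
Your argument is correct in substance, but it takes a different route from the paper. The paper smooths and band-limits $b_\ell$: it multiplies $\widetilde\nu_\ell=(B_\ell^T)_\#(e^{ia_\ell\cdot\eta_\ell}\nu_\ell)$ by $\widehat\varrho(\varepsilon\xi)\tau_R(\xi)$, which gives finite, compactly supported measures. You instead localize $b_\ell$ in space with $\omega_\varepsilon=e^{-\varepsilon|x|^2}$, which amounts to convolving $\nu_\ell$ with the normalized Gaussian $g_\varepsilon=(2\pi)^{-d}\widehat{\omega_\varepsilon}$.

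\textbf{What the paper's choice buys.} Because its multipliers are uniformly bounded, the dominating measure $\widetilde\mu_1\star\widetilde\mu_2$ (with $\widetilde\mu_\ell=(B_\ell^T)_\#\mu_\ell$) stays fixed through both limits. Only its local finiteness is used. The work sits on the left-hand side: first $R\to\infty$, then $\varrho_\varepsilon*b_\ell\to b_\ell$ in $L^p_{\rm loc}$. That last step needs the uniform $L^\infty$ bound to give convergence in $\mathcal S'$ and not just $\mathcal D'$, since $\widehat\zeta$ is not compactly supported.

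\textbf{What your choice buys.} The left-hand limit becomes immediate: $e^{-2\varepsilon|x|^2}b_1b_2\to b_1b_2$ boundedly a.e., which gives $\mathcal S'$ convergence directly. The cost moves to the right-hand side. There the dominating measure is Gaussian-smeared, and you must show $\int|\varphi|\,d(g_\varepsilon\star g_\varepsilon\star\sigma)\to\int|\varphi|\,d\sigma$, where $\sigma=|\nu_1|\star|\nu_2|$ or its admissible majorant. This needs polynomial growth of $\sigma$, which you rightly take from the annular bounds of Lemma~\ref{lem:dyadic-admissible}. The same growth is what lets you write $\widehat{\omega_\varepsilon b_\ell}$ as an honest integral of $g_\varepsilon$ against $\nu_\ell$.

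Your proof therefore covers admissible (more generally, tempered) $\mu_\ell$ rather than arbitrary positive Radon measures. This is no real loss: it is the only case used later. The paper's argument also tacitly needs comparable information, since local finiteness of the push-forward is not automatic for arbitrary $\mu_\ell$ when the ranges of $B_1^T$ and $B_2^T$ overlap.

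\textbf{Two corrections.}
\begin{enumerate}
  \item $\widehat{\omega_\varepsilon b_\ell}$ is in general not in $L^1$. For an angular block, $|\widehat{g_\ell}|\sim|k|^{-3}$ is not integrable on $\R^3$, and the transverse Gaussian smearing does not change this. Justify $\widehat{b_1^\varepsilon b_2^\varepsilon}=(2\pi)^{-d}\,\widehat{b_1^\varepsilon}\star\widehat{b_2^\varepsilon}$ instead from $b_\ell^\varepsilon\in L^1\cap L^\infty\subset L^2$ and Plancherel.
  \item Lower semicontinuity points the wrong way for an upper bound. What actually closes the argument is the dominated convergence with Gaussian tails against polynomially growing annular masses, which you also describe.
\end{enumerate}
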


\begin{proof}
Let \(\nu_\ell=\widehat{g_\ell}\) be the complex Radon measure associated
with \(g_\ell\), with \(|\nu_\ell|\le \mu_\ell\). The translation
\(g_\ell(\cdot+a_\ell)\) only multiplies \(\nu_\ell\) by the phase
\(e^{ia_\ell\cdot\eta_\ell}\), and hence does not change its total variation.
By Lemma~\ref{lem:linear}, the Fourier transform of
\(b_\ell(x)=g_\ell(B_\ell x+a_\ell)\) is, up to a harmless constant depending
only on \(B_\ell\), the push-forward
\[
  \widetilde\nu_\ell
  := (B_\ell^T)_\#\bigl(e^{ia_\ell\cdot\eta_\ell}\nu_\ell\bigr),
\]
and
\[
  |\widetilde\nu_\ell|\le C_\ell \widetilde\mu_\ell,
  \qquad \mbox{ with }
  \widetilde\mu_\ell:=(B_\ell^T)_\#\mu_\ell .
\]
Thus it is enough to prove the product estimate in full Fourier variables.
The desired dominating measure is, up to the same harmless constant, the
convolution \(\widetilde\mu_1 \star \widetilde\mu_2\), equivalently the push-forward
of \(\mu_1\otimes\mu_2\) by
\[
 \R^{q_1}\times \R^{q_2} \ni (\eta_1,\eta_2)\longmapsto B_1^T\eta_1+B_2^T\eta_2 \in \R^d.
\]

Choose \(\varrho\in\mathcal S(\mathbb R^d)\) with
\(\widehat\varrho(0)=1\), and choose \(\tau_R\in C^\infty_0(\mathbb R^d)\)
such that \(\tau_R=1\) on \(\{\xi\in \R^d: \; |\xi|\le R\}\), \(\tau_R=0\) on
\(\{\xi\in \R^d: \; |\xi|\ge 2R\}\), and \(0\le \tau_R\le1\). For $\varepsilon>0$, set
\[
  d\widetilde\nu_{\ell,\varepsilon,R}(\xi)
  =
  \widehat\varrho(\varepsilon\xi)\tau_R(\xi)\,
  d\widetilde\nu_\ell(\xi),
  \qquad
  b_{\ell,\varepsilon,R}
  =
  \mathcal F^{-1}\widetilde\nu_{\ell,\varepsilon,R}.
\]
The measures \(\widetilde\nu_{\ell,\varepsilon,R}\) are finite and compactly
supported. Therefore, by the Fourier inversion formula and Fubini's theorem,
\[
  \widehat{b_{1,\varepsilon,R}b_{2,\varepsilon,R}}
  =
(2\pi)^{-d}
  \Pi_\#\bigl(
    \widetilde\nu_{1,\varepsilon,R}
    \otimes
    \widetilde\nu_{2,\varepsilon,R}
  \bigr),
  \qquad
  \Pi(\xi_1,\xi_2)=\xi_1+\xi_2  \qquad \forall (\xi_1, \xi_2)\in \R^d \times \R^d.
\]
Consequently, for every \(\zeta\in C^\infty_c(\mathbb R^d)\),
\[
\begin{aligned}
 \left|
 \left\langle
   \widehat{b_{1,\varepsilon,R}b_{2,\varepsilon,R}},\zeta
 \right\rangle_{\mathcal S', \mathcal S}
 \right|
 &\le
 C
 \int_{\mathbb R^d}\int_{\mathbb R^d}
 |\zeta(\xi_1+\xi_2)|\,
 d\widetilde\mu_1(\xi_1)d\widetilde\mu_2(\xi_2).
\end{aligned}
\]
The right-hand side is finite because the corresponding push-forward measure
is locally finite. The estimate is independent of \(R\). Letting
\(R\to\infty\) and using dominated convergence on the compact set
\(\operatorname{Supp}\zeta\) gives the same inequality for
\[
  b_{\ell,\varepsilon}:=\varrho_\varepsilon*b_\ell,
  \qquad
  \varrho_\varepsilon(x)=\varepsilon^{-d}\varrho(x/\varepsilon).
\]
Finally, since \(b_\ell\in L^\infty(\mathbb R^d)\), the approximate identity
gives \(b_{\ell,\varepsilon}\to b_\ell\) in \(L^p_{\rm loc}\) for every
finite \(1 \leq p < +\infty\). The products therefore converge in \(L^1_{\rm loc}(\R^d)\), hence
in \(\mathcal D'(\mathbb R^d)\):
\[
  b_{1,\varepsilon}b_{2,\varepsilon}
  \longrightarrow b_1b_2.
\]
Passing to the limit \(\varepsilon\to0\) in the preceding inequality proves
that \(\widehat{b_1b_2}\) is dominated by the stated push-forward measure.
\end{proof}

\subsection{The conjugated equation}

In this section, let $d=3N$ and let $\psi\in H^1(\R^d)$ satisfy
\[
        H\psi=E\psi,
        \qquad E<\inf\sigma_{\rm ess}(H).
\]
Set $\psi=e^{F_{\rm cut}}\phi$ and $A=\nabla F_{\rm cut}$.  A direct computation gives
\[
        -\Delta(e^{F_{\rm cut}}\phi)
        =e^{F_{\rm cut}}\left[-\Delta\phi-2A\cdot\nabla\phi-(\Delta F_{\rm cut}+|A|^2)\phi\right].
\]
Thus
\begin{equation}\label{eq:conj}
        -\Delta\phi-2A\cdot\nabla\phi+\bigl(V-\Delta F_{\rm cut}-|A|^2-E\bigr)\phi=0.
\end{equation}
Fix $\rho>0$ and write
\begin{equation}\label{eq:rho}
        (-\Delta+\rho)\phi=2A\cdot\nabla\phi+C\phi,
        \qquad
        C=\Delta F_{\rm cut}+|A|^2-V+E+\rho.
\end{equation}

\paragraph{Distributional meaning of the conjugation.}
The computation above is understood in the sense of distributions. More
precisely, one first writes the identity for smooth regularizations of
\(F_{\mathrm{cut}}\). Let \(\theta_\varepsilon\) be a standard mollifier and,
on each fixed compact set, let \(F_\varepsilon\) denote a regularization of
\(F_{\mathrm{cut}}\). For smooth \(F_\varepsilon\) and smooth test functions
one has
\[
  -\Delta(e^{F_\varepsilon}v)
  =
  e^{F_\varepsilon}
  \Bigl(
    -\Delta v
    -2\nabla F_\varepsilon\cdot\nabla v
    -(\Delta F_\varepsilon+|\nabla F_\varepsilon|^2)v
  \Bigr).
\]
Passing to the limit \(\varepsilon\to0\) gives the same identity in
\(\mathcal D'(\mathbb R^d)\), with \(F_\varepsilon\to F_{\mathrm{cut}}\)
locally uniformly and \(\nabla F_\varepsilon\to\nabla F_{\mathrm{cut}}\)
locally in \(L^p\) for every finite \(p\). The potentially singular scalar
terms are not used separately. Instead, the distribution
\[
  \Delta F_{\mathrm{cut}} - V
\]
is identified in Proposition~\ref{prop:coefficients} below with a bounded function, after the
electron--nucleus and electron--electron Coulomb cancellations. Consequently
\[
  \Delta F_{\mathrm{cut}}+|\nabla F_{\mathrm{cut}}|^2-V+E+\rho
\]
is a bounded coefficient, and equation~\eqref{eq:rho} is an identity in
\(\mathcal D'(\mathbb R^d)\). Thus all
products in the conjugated equation may be interpreted either
distributionally or as multiplications by bounded functions after this
identification.



The boundedness and Fourier control of the coefficients \(A\) and \(C\) are
proved in Proposition~\ref{prop:coefficients}. We emphasize that no global
\(C^{1,1}\) bound on \(F_{\rm cut}\) is used. In particular, the logarithmic
three-body term has individual second derivatives with logarithmic singularities.
What is needed for the conjugated equation is only the boundedness and Fourier
control of \(\nabla F_{\rm cut}\) and of the specific scalar combination
\[
        \Delta F_{\rm cut}+|\nabla F_{\rm cut}|^2-V .
\]
This is the case when $F_{\rm cut}= F_{2, \rm cut}$ and when $F_{\rm cut} = F_{2, \rm cut} + F_{3, \rm cut}$.

Let us emphasize here that, since \(F_{\rm cut}\) and \(\nabla F_{\rm cut}\) are bounded, and since
\(\psi\in H^1(\mathbb R^d)\), we have
\[
        \phi=e^{-F_{\rm cut}}\psi\in H^1(\mathbb R^d).
\]

\medskip

\begin{proposition}[Fourier control of the conjugated coefficients]
\label{prop:coefficients}
Each scalar component of
\[
        A=\nabla F_{\rm cut},
        \qquad
        C=\Delta F_{\rm cut}+|\nabla F_{\rm cut}|^2-V+E+\rho
\]
has Fourier transform dominated by an admissible measure. Moreover
\[
        A,C\in L^\infty(\mathbb R^{3N}).
\]
\end{proposition}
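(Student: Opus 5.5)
The plan is to decompose $A$ and $C$ into finitely many elementary blocks, each of the form $b(x)=g(Bx+a)$ with $B$ a surjective linear map onto a collision variable $y^{i,\ell}=x^i-R^\ell$, $y^{ij}=x^i-x^j$, or the pair $(y^{i,\ell},y^{j,\ell})\in\R^6$. For each block we then check boundedness and domination of $\widehat g$ by $C\kappa_{q,m}\,dk$ with $q\in\{3,6\}$. Lemma~\ref{lem:linear} turns this into domination by the push-forward under $B^T$, which is an admissible measure with $L_1=B^T$ injective. Lemma~\ref{lem:products} handles products, and finite sums of admissible measures are admissible. Constants such as $E+\rho$ are dominated by a multiple of $\delta_0$, the case $p=0$ of Definition~\ref{def:admissible-measure}.

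\textbf{First-order coefficient $A$.} Differentiating the two-body term gives, for $r=|y|$,
\[
\nabla_y\bigl(\chi(|y|)|y|\bigr)=\chi(|y|)\frac{y}{|y|}+\chi'(|y|)\,y .
\]
The first part is the angular block of Lemma~\ref{lem:angular}, and the second lies in $C_c^\infty$, hence is Schwartz. Both are bounded. The three-body part $\gamma\nabla F_{3,\rm cut}$ is, block by block, $\nabla_Y G$ composed with $x\mapsto(y^{i,\ell},y^{j,\ell})$, so Lemma~\ref{lem:log-blocks} gives boundedness and domination by $\kappa_{6,m}$. Thus each component of $A$ is a finite sum of admissible blocks, and $A\in L^\infty$.

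\textbf{Scalar coefficient $C$: cancellation of the Coulomb terms.} In three dimensions, near $0$ we have
\[
\Delta_y\bigl(\chi(|y|)|y|\bigr)=\frac{2}{|y|}\quad\text{on }\{|y|<1\},
\]
and in general
\[
\Delta_y\bigl(\chi(|y|)|y|\bigr)=\frac{2\chi(|y|)}{|y|}+w(y),\qquad w\in C_c^\infty .
\]
Since $\Delta_x$ acting on a function of $y^{i,\ell}$ is $\Delta_y$, and on a function of $y^{ij}=x^i-x^j$ it is $2\Delta_y$, we get
\[
\Delta F_{2,\rm cut}=-\sum_{i,\ell}\frac{Z^\ell\chi}{|y^{i,\ell}|}+\sum_{i<j}\frac{\chi}{|y^{ij}|}+\text{smooth compactly supported blocks}.
\]
The coefficients $\tfrac12$ and $\tfrac14$ are exactly the Kato cusp values. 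Subtracting $V$ then leaves, for each pair,
\[
\pm c\,\frac{1-\chi(|y|)}{|y|}.
\]
These are the long-range remainders of Lemma~\ref{lem:longrange}: bounded by $1$ and dominated by $\kappa_{3,0}$. I would carry out this identity distributionally, which is legitimate since $|y|^{-1}\in L^1_{\rm loc}$ in three dimensions, so that $\Delta F_{\rm cut}-V$ is identified with a bounded function. The three-body contribution $\Delta F_{3,\rm cut}$ is a sum of blocks $\Delta_Y G$ composed with the pair map. Because the map $x\mapsto(y^{i,\ell},y^{j,\ell})$ has orthonormal-type structure (it is the identity on coordinates $x^i,x^j$ up to translation), $\Delta_x$ acts as $\Delta_Y$. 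Lemma~\ref{lem:log-blocks} then gives boundedness and $\kappa_{6,m}$ control.

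\textbf{Remaining terms and main obstacle.} $|A|^2$ is a finite sum of products of two components of $A$. Each factor is a bounded block with controlled Fourier transform, so Lemma~\ref{lem:products} gives domination by the push-forward of a product measure under $(\eta_1,\eta_2)\mapsto B_1^T\eta_1+B_2^T\eta_2$. This push-forward is an elementary admissible measure with $p=2$ once we note that each $B_r^T$ is injective. The step I expect to be most delicate is the cancellation: making rigorous that the singular parts of $\Delta F_{\rm cut}$ and $V$ cancel exactly as distributions. This includes checking the factor $2$ from $\Delta_x$ on electron--electron variables against the coefficient $\tfrac14$. It also includes confirming that the three-body term introduces no new $1/|y|$-type singularity beyond the bounded degree-zero combination $16\,y\cdot z/(|y|^2+|z|^2)$; the cut-off derivative terms of $G$ remain bounded because they are supported away from $Y=0$.
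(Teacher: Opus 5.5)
Your proposal is correct and follows the paper's proof essentially step by step. It uses the same decomposition into collision-variable blocks, Lemmas~\ref{lem:angular}, \ref{lem:longrange} and \ref{lem:log-blocks} for the individual blocks, and the Coulomb cancellation in $\Delta F_{2,\rm cut}-V$ leaving the remainders $\pm c\,(1-\chi(|y|))/|y|$; it then lifts to $x\in\R^{3N}$ with Lemma~\ref{lem:linear}, handles $|\nabla F_{\rm cut}|^2$ with Lemma~\ref{lem:products}, and absorbs $E+\rho$ into $\delta_0$. One small correction on the step you flag as delicate: local integrability of $|y|^{-1}$ only makes the right-hand side a distribution, whereas the absence of a Dirac mass at the origin in $\Delta_y(\chi(|y|)|y|)$ comes from the boundedness of $\nabla_y(\chi(|y|)|y|)$ (vanishing flux through small spheres), which is precisely Lemma~\ref{lem:distrib}, the result the paper invokes at that point.
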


For the proof of Proposition~\ref{prop:coefficients}, we make use of the following lemma, the proof of which is given in the appendix.

\begin{lemma}[Distributional Laplacian of the cut-off cusp]\label{lem:distrib}
Let \(\chi\in C^\infty_c([0,\infty))\) be equal to one in a neighbourhood
of the origin, and set
\[
  f(y)=\chi(|y|)|y|,\qquad y\in\mathbb R^3.
\]
Then \(f\in W^{1,\infty}_{\mathrm{loc}}(\mathbb R^3)\), and its
distributional gradient is represented by the locally bounded vector field
\[
  \nabla_y f(y)
  =
  \chi(r)\frac{y}{r}+\chi'(r)y,
  \qquad r=|y|.
\]
Moreover, the distributional Laplacian of \(f\) is represented by the
locally integrable function
\[
  \Delta_y f(y)
  =
  \frac{2\chi(r)}{r}
  +
  4\chi'(r)
  +
  r\chi''(r),
  \qquad r=|y|.
\]
In particular, no Dirac mass occurs at the origin.
\end{lemma}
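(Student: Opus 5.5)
The plan is to prove this by direct computation away from the origin, followed by a distributional check across the origin. Write $r=|y|$ and note that $f$ is smooth on $\mathbb R^3\setminus\{0\}$. There the chain rule gives
\[
\nabla f=\bigl(\chi'(r)r+\chi(r)\bigr)\frac{y}{r}=\chi(r)\frac yr+\chi'(r)y.
\]
This vector field is bounded on compact sets, since $|y/r|=1$ and $\chi'$ is bounded. Because $\chi$ is compactly supported, it is in fact globally bounded. Since $f$ is Lipschitz near the origin (indeed $f=|y|$ there) and smooth elsewhere, $f\in W^{1,\infty}_{\rm loc}$.

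To see that the pointwise gradient is also the distributional one, I will pair with a test function $\varphi$ and integrate by parts on $\{|y|>\varepsilon\}$. The boundary term is bounded by $\sup|f\varphi|\cdot 4\pi\varepsilon^2=O(\varepsilon^3)$, which tends to $0$.

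For the Laplacian, I will use the radial formula $\Delta g(r)=g''+\tfrac2r g'$ with $g(r)=\chi(r)r$. Here $g'=\chi'r+\chi$ and $g''=\chi''r+2\chi'$, so
\[
\Delta f=r\chi''+2\chi'+\tfrac2r(\chi'r+\chi)=\tfrac{2\chi}{r}+4\chi'+r\chi''
\]
on $\mathbb R^3\setminus\{0\}$. This function is locally integrable because $1/r\in L^1_{\rm loc}(\mathbb R^3)$.

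The only real point, which is the one where a Dirac mass could appear, is showing that the distributional Laplacian has no singular part at $0$. I will apply Green's identity on $\{|y|>\varepsilon\}$:
\[
\int_{|y|>\varepsilon}f\Delta\varphi=\int_{|y|>\varepsilon}\Delta f\,\varphi+\int_{|y|=\varepsilon}\bigl(f\partial_n\varphi-\varphi\partial_nf\bigr)\,dS.
\]
The boundary term $f\partial_n\varphi$ is $O(\varepsilon\cdot\varepsilon^2)$. For the other term, $\partial_n f=-\chi(\varepsilon)-\chi'(\varepsilon)\varepsilon$ is bounded, so $\varphi\,\partial_nf$ integrates to $O(\varepsilon^2)$ over the sphere. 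Both contributions therefore vanish as $\varepsilon\to0$. By dominated convergence, using $\Delta f\in L^1_{\rm loc}$, we get $\langle\Delta f,\varphi\rangle=\int\Delta f\,\varphi$, so no Dirac mass occurs.

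The contrast with $1/|y|$ explains why this works. For $1/|y|$ the normal derivative grows like $\varepsilon^{-2}$, which exactly cancels the sphere area and produces a Dirac mass. Here the normal derivative of $f$ stays bounded, so the sphere term disappears.
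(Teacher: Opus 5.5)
Your proposal is correct and follows essentially the same route as the paper. The paper also computes the Laplacian classically away from the origin and applies Green's identity on $\{|y|>\varepsilon\}$, with boundary terms of order $\varepsilon^3$ and $\varepsilon^2$. It then passes to the limit using $1/r\in L^1_{\rm loc}(\mathbb R^3)$.

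The differences are cosmetic. You check the weak gradient by the same excision argument, whereas the paper invokes the Lipschitz property directly. You also keep $\partial_n f=-\chi(\varepsilon)-\varepsilon\chi'(\varepsilon)$ in general form rather than using $\chi=1$ near $0$.
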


\begin{proof}[Proof of Proposition~\ref{prop:coefficients}]
We inspect the two-body and three-body parts separately.

For $y\in \R^3$, let \(r=|y|\) and, using Lemma~\ref{lem:distrib}, the following identities hold in the distributional sense: 
\[
        f(y)=\chi(r)r,\qquad y\in\mathbb R^3.
\]
Then
\[
        \nabla_y f(y)
        =
        \chi(r)\frac{y}{r}+\chi'(r)y,
\]
and
\[
        \Delta_y f(y)
        =
        \frac{2\chi(r)}{r}+4\chi'(r)+r\chi''(r).
\]
The first term in \(\nabla_y f\) is the localized angular block of
Lemma~\ref{lem:angular}, while \(\chi'(r)y\) is \(C^\infty_c(\R^3)\). Thus every component of
\(\nabla_y f\) has admissible Fourier control.

\medskip

For an electron-nucleus variable \(y=x^i-R^\ell\) (with $1\leq i \leq N$ and $1\leq \ell \leq L$), the corresponding term in
\(F_{2,\rm cut}\) is
\[
        -\frac{Z^\ell}{2}\chi(|y|)|y|.
\]
Therefore near \(y=0\),
\[
        \Delta_{x^i}\left(-\frac{Z^\ell}{2}\chi(|y|)|y|\right)
        =
        -Z^\ell\frac{\chi(|y|)}{|y|}
        + b_{\ell i}(y),
\]
where \(b_{\ell i}\in L^\infty\) is smooth and supported where the derivatives
of \(\chi\) occur. Since the electron--nucleus part of \(V\) is
\[
        -\frac{Z^\ell}{|x^i-R^\ell|},
\]
the singular terms cancel in
\[
        \Delta F_{2,\rm cut}-V.
\]
The remaining electron-nucleus contribution is a finite sum of bounded smooth
cutoff terms and of long-range remainders of the form
\[
        \frac{1-\chi(|y|)}{|y|},
\]
which are handled by Lemma~\ref{lem:longrange}.

For an electron-electron variable \(y=x^i-x^j\), the corresponding term in
\(F_{2,\rm cut}\) is
\[
        \frac14\chi(|y|)|y|.
\]
Since
\[
        (\Delta_{x^i}+\Delta_{x^j})g(x^i-x^j)
        =
        2\Delta_y g(y),
\]
we obtain near \(y=0\)
\[
\begin{aligned}
        (\Delta_{x^i}+\Delta_{x^j})
        \left(\frac14\chi(|y|)|y|\right)
        &=
        \frac12\Delta_y(\chi(|y|)|y|)                                      \\
        &=
        \frac{\chi(|y|)}{|y|}
        + b_{ij}(y),
\end{aligned}
\]
where \(b_{ij}\in L^\infty\) is supported away from \(y=0\). This cancels the
electron-electron singularity \(1/|x^i-x^j|\) in \(V\). The remaining terms are
again bounded cutoff terms and long-range remainders controlled by Lemma~\ref{lem:longrange}.

It remains to treat the logarithmic three-body part. For one summand, write
\[
        y=x^i-R^\ell,\qquad z=x^j-R_\ell,\qquad
        N(y,z)^2=|y|^2+|z|^2,\qquad P(y,z)=y\cdot z.
\]
The singular core is
\[
        P\log N^2 .
\]
By Lemma~\ref{lem:log-blocks},
\[
\nabla_{(y,z)}
\bigl(P\log N^2\bigr)\in L^\infty(\mathbb R^6),
\qquad
\Delta_{(y,z)}
\bigl(P\log N^2\bigr)\in L^\infty(\mathbb R^6),
\]
and their Fourier transforms are dominated by admissible measures in the six-dimensional
collision variable \((y,z)\). More explicitly,
\[
\Delta_{(y,z)}
\bigl(P\log N^2\bigr)
=
16\,\frac{y\cdot z}{|y|^2+|z|^2},
\]
which is bounded. Terms in which at least one derivative falls on one of the cutoffs
\(\chi(|y|)\) or \(\chi(|z|)\) are supported away from the joint singularity
\((y,z)=(0,0)\) or are compactly supported smooth functions. They therefore have
rapidly decaying Fourier transforms and are also dominated by admissible measures.
After the linear substitution
\[
\R^{3N} \ni x=(x^1, \ldots, x^N)\longmapsto (x^i-R^\ell,x^j-R^\ell)\in \R^6,
\]
Lemma~\ref{lem:linear} gives admissible Fourier control in the full
configuration variable \(x\in\mathbb R^{3N}\).

We have therefore proved admissible Fourier control and boundedness for
\(\nabla F_{\rm cut}\) and for the scalar combination
\[
  \Delta F_{\rm cut}-V.
\]
The long-range Coulomb remainders occur in this scalar Laplacian--potential
combination. They do not arise from the quadratic coefficient
\(|\nabla F_{\rm cut}|^2\), which is treated separately below by product
stability of the Fourier-control class.

\medskip

Finally, we treat the quadratic coefficient \(|\nabla F_{\rm cut}|^2\).
Each scalar component of \(\nabla F_{\rm cut}\) is a finite linear combination of
bounded coefficient blocks of the following three types:
\[
g(x^i-R^\ell),\qquad
g(x^i-x^j),\qquad
g(x^i-R^\ell,x^j-R^\ell).
\]
For the first two types, \(g\) is a two-body block in \(\mathbb R^3\), consisting of a
localized angular term and smooth compactly supported remainders. These are controlled
by Lemma~\ref{lem:angular}, with the harmless smooth terms absorbed into the
same admissible class. For the long-range Coulomb remainders occurring in
\(\Delta F_{\rm cut}-V\), the control is given by Lemma~\ref{lem:longrange}.
For the third type, \(g\) is a logarithmic three-body block in \(\mathbb R^6\), controlled
by Lemma~\ref{lem:log-blocks}. In all cases, the passage from collision variables
to the full variable \(x\) is handled by Lemma~\ref{lem:linear}.

Thus every product appearing in \(|\nabla F_{\rm cut}|^2\) is a finite linear combination
of products of the form
\[
b_1(x)b_2(x)
=
g_1(B_1x+a_1)\,g_2(B_2x+a_2),
\]
where \(B_\ell:\mathbb R^{3N}\to\mathbb R^{q_\ell}\) is surjective,
\(q_\ell\in\{3,6\}\), \(a_\ell\in\mathbb R^{q_\ell}\), and each \(b_\ell\) is bounded
with Fourier transform dominated by an admissible measure in the corresponding
collision variable. The possible products are summarized as follows:
\[
\begin{array}{c|c}
\text{product type} & \text{collision dimensions} \\
\hline
\mathrm{EN}\times\mathrm{EN} & 3+3 \\
\mathrm{EN}\times\mathrm{EE} & 3+3 \\
\mathrm{EE}\times\mathrm{EE} & 3+3 \\
\mathrm{EN}\times F_3 & 3+6 \\
\mathrm{EE}\times F_3 & 3+6 \\
F_3\times F_3 & 6+6 .
\end{array}
\]
Here \(\mathrm{EN}\) denotes an electron--nucleus gradient block,
\(\mathrm{EE}\) an electron--electron gradient block, and \(F_3\) a
logarithmic three-body gradient block. Lemma~\ref{lem:products} implies that the Fourier
transform of each such product is dominated by the push-forward of the product of the
two controlling measures under
\[
\R^{q_1}\times \R^{q_2}\ni (\eta_1,\eta_2)\longmapsto B_1^T\eta_1+B_2^T\eta_2 \in \R^{3N}.
\]
This push-forward is admissible in the sense of Definition~\ref{def:admissible-measure},
and its local finiteness follows from Lemma~\ref{lem:dyadic-admissible}. Hence
\(|\nabla F_{\rm cut}|^2\) has admissible Fourier control.

The constant \(E+\rho\) contributes only a multiple of \(\delta_0\), which is
admissible. Therefore every component of \(A\) and the scalar coefficient
\(C\) are bounded and have admissible Fourier control.
\end{proof}

\begin{remark}[Checklist of the coefficient verification]
The proof of Proposition~\ref{prop:coefficients} verifies the following five points.
\begin{enumerate}
  \item The electron--nucleus Coulomb singularities are cancelled by the
  electron--nucleus part of \(\Delta F_{2,\rm cut}\).

  \item The electron--electron Coulomb singularities are cancelled by the
  electron--electron part of \(\Delta F_{2,\rm cut}\), taking into account the
  identity
  \[
    (\Delta_{x^i}+\Delta_{x^j})g(x^i-x^j)=2\Delta_y g(y).
  \]

  \item The remaining long-range Coulomb terms are of the form
  \[
    \frac{1-\chi(|y|)}{|y|}
  \]
  and are controlled by Lemma~\ref{lem:longrange}.

  \item The logarithmic three-body factor contributes bounded terms to
  \(\nabla F_{\rm cut}\) and to the scalar Laplacian combination entering the
  conjugated equation. No uniform bound on the full Hessian of the logarithmic
  term is used.

  \item The quadratic coefficient \(|\nabla F_{\rm cut}|^2\) is controlled by
  the product stability of the admissible Fourier-control class, namely
  Lemma~\ref{lem:products} together with the local finiteness and annular bounds of
  Lemma~\ref{lem:dyadic-admissible}.
\end{enumerate}
\end{remark}

\subsection{Weighted convolution estimate}

Let $\mu$ be an admissible measure.  

\begin{lemma}[Resolvent-convolution bound]
\label{lem:resolvent-convolution}
Let $\rho>0$, \(0\le s<2\), and let \(\mu\) be an admissible measure. Then there exist
\(C_s>0\) and \(M\in\mathbb N\) such that, for all \(\eta\in\mathbb R^d\),
\[
   \int_{\mathbb R^d}
   \frac{\langle\xi\rangle^s}{|\xi|^2+\rho}\,
   d\mu(\xi-\eta)
   \le
   C_s\langle\eta\rangle^{s-2}
   \bigl(1+\log\langle\eta\rangle\bigr)^M
   +C_s{\bf 1}_{|\eta|\le 2}.
\]
Consequently, there exists $C_s>0$ and $M\in \N$ such that for all \(\Lambda\ge 2\) and all $\eta \in \R^d$ with \(|\eta|\ge \Lambda/2\),
\[
   \int_{|\xi|>\Lambda}
   \frac{\langle\xi\rangle^s}{|\xi|^2+\rho}\,
   d\mu(\xi-\eta)
   \le
   C_s\Lambda^{-2}
   \bigl(1+\log\Lambda\bigr)^M
   \langle\eta\rangle^s ,
\]
and
\[
   \int_{|\xi|>\Lambda}
   \frac{\langle\xi\rangle^s\langle\eta\rangle}{|\xi|^2+\rho}\,
   d\mu(\xi-\eta)
   \le
   C_s\Lambda^{-1}
   \bigl(1+\log\Lambda\bigr)^M
   \langle\eta\rangle^s .
\]
\end{lemma}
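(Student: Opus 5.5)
The plan is to decompose the integral dyadically in $|\xi|$ and feed each piece into the annular estimates of Lemma~\ref{lem:dyadic-admissible}. Substituting $\zeta=\xi-\eta$, the integral becomes $\int \langle \eta+\zeta\rangle^s(|\eta+\zeta|^2+\rho)^{-1}\,d\mu(\zeta)$. We split the $\zeta$-domain into two parts: the ball $B(\eta)=\{|\eta+\zeta|<1\}$, and the annuli $A_{2^\ell}(\eta)=\{2^\ell\le|\eta+\zeta|<2^{\ell+1}\}$ for $\ell\in\N$. On the ball the integrand is bounded by $C_\rho$, so by \eqref{eq:est2} its contribution is at most $C(1+\log(2+|\eta|))^M\min\{1,\langle\eta\rangle^{-3}\}$. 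Since $s<2$, this is bounded by $C\langle\eta\rangle^{s-2}(1+\log\langle\eta\rangle)^M$. On the annulus $A_{2^\ell}(\eta)$ the integrand is bounded by $C2^{\ell(s-2)}$. Estimate \eqref{eq:est1} with $R=2^\ell$ and $a=\eta$ then gives the bound
\[
\sum_{\ell\in\N} 2^{\ell(s-2)}\bigl(1+\log(2+2^\ell+|\eta|)\bigr)^M\min\Bigl\{1,\bigl(2^\ell/\langle\eta\rangle\bigr)^3\Bigr\}.
\]

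The key step is to evaluate this dyadic sum, splitting at $2^\ell\approx\langle\eta\rangle$. For $2^\ell\le\langle\eta\rangle$ the terms are $2^{\ell(s+1)}\langle\eta\rangle^{-3}$ times logarithms. Because $s+1>0$, this geometric sum is dominated by its top term, which is $\langle\eta\rangle^{s-2}$ up to the factor $(1+\log\langle\eta\rangle)^M$. For $2^\ell>\langle\eta\rangle$ the terms are $2^{\ell(s-2)}(1+\log(2+2^\ell))^M$. Because $s<2$, this series is again dominated by its first term, which is $\langle\eta\rangle^{s-2}(1+\log\langle\eta\rangle)^M$; the logarithmic growth is absorbed by a slightly smaller exponent, as in the proof of Lemma~\ref{lem:homogeneous-log-decay}. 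These two sums together give the first inequality. The term $C_s\mathbf 1_{|\eta|\le2}$ only accounts for bounded $\eta$, where everything is $O(1)$.

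For the two consequences, I would repeat the same decomposition but keep only the annuli with $2^{\ell+1}>\Lambda$; the ball is excluded automatically once $\Lambda\ge2$. Two cases arise according to the size of $\langle\eta\rangle$.

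\begin{itemize}
\item If $\langle\eta\rangle\le C\Lambda$, the truncated sum is bounded by $\sum_{2^\ell\gtrsim\Lambda}2^{\ell(s-2)}(\log)^M\lesssim\Lambda^{s-2}(1+\log\Lambda)^M$. Since $\langle\eta\rangle\simeq\Lambda$ when $\Lambda/2\le|\eta|\le C\Lambda$, this equals $\Lambda^{-2}\langle\eta\rangle^s$ up to constants.
\item If $\langle\eta\rangle\ge C\Lambda$, the first inequality gives $\langle\eta\rangle^{s-2}(\log\langle\eta\rangle)^M$. This is at most $\Lambda^{-2}(1+\log\Lambda)^{M}\langle\eta\rangle^s$, because $\langle\eta\rangle^{-2}(\log\langle\eta\rangle)^M\le C\Lambda^{-2}(\log\Lambda)^M$ for $\langle\eta\rangle\ge\Lambda$: the function $t\mapsto t^{-2}(\log t)^M$ is eventually decreasing.
\end{itemize}

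The third inequality is the second one multiplied by $\langle\eta\rangle$. To handle that extra factor I would use the sharper intermediate bound $\langle\eta\rangle^{s-2}(\log)^M$, valid when $\langle\eta\rangle\gtrsim\Lambda$, or $\Lambda^{s-2}(\log\Lambda)^M$, valid when $\langle\eta\rangle\simeq\Lambda$. Multiplying by $\langle\eta\rangle$ gives $\langle\eta\rangle^{s-1}(\log)^M$, which is at most $\Lambda^{-1}(1+\log\Lambda)^M\langle\eta\rangle^s$ because $t^{-1}(\log t)^M$ is eventually decreasing.

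The main obstacle I expect is bookkeeping for the logarithmic factors $(1+\log(2+2^\ell+|\eta|))^M$ in the tail $2^\ell\gg\langle\eta\rangle$. They must be summed without losing a power. I would do this with the bound $(1+\log(2+2^\ell+|\eta|))^M\le C(1+\log\langle\eta\rangle)^M(1+\ell')^M$, where $\ell'=\ell-\log_2\langle\eta\rangle$, in the same spirit as Lemma~\ref{lem:aux}, so that the remaining sum $\sum_{\ell'}2^{\ell'(s-2)}(1+\ell')^M$ converges. It is also essential that the strict inequality $s<2$ is used exactly here.
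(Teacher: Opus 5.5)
Your proposal is correct and follows essentially the same route as the paper. Both arguments use a dyadic decomposition in $|\xi|=|\eta+\zeta|$, the ball and annular bounds of Lemma~\ref{lem:dyadic-admissible}, a split of the dyadic sum at $2^\ell\approx\langle\eta\rangle$ (using $s+1>0$ below the split and $s<2$ above it), and absorption of the logarithms for the high-frequency consequences. Your case split $\langle\eta\rangle\lesssim\Lambda$ versus $\langle\eta\rangle\gtrsim\Lambda$ is harmless but unnecessary, which is why the paper omits it: $|\eta|\ge\Lambda/2$ already gives $\langle\eta\rangle\gtrsim\Lambda$, so the first estimate (without the indicator, since the ball is excluded) together with $(1+\log x)^M\le C_{M,\varepsilon}x^{\varepsilon}$ closes both bounds directly.
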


\begin{proof}
Let $\eta \in \R^d$. By Lemma~\ref{lem:dyadic-admissible},
\[
\mu\bigl(\{\zeta\in \R^d:\; |\eta+\zeta|<1\}\bigr)
\leq
C
\bigl(1+\log(2+|\eta|)\bigr)^M
\min\{1,\langle\eta\rangle^{-3}\}.
\]
Therefore
\[
\int_{|\xi|<1}
\frac{\langle\xi\rangle^s}{|\xi|^2+\rho}\,d\mu(\xi-\eta)
\leq
C\,\mathbf 1_{|\eta|\leq2}
+
C
\langle\eta\rangle^{-3}
\bigl(1+\log\langle\eta\rangle\bigr)^M
\mathbf 1_{|\eta|>2}.
\]
Since \(0\leq s<2\), the second term is bounded by
\[
C
\langle\eta\rangle^{s-2}
\bigl(1+\log\langle\eta\rangle\bigr)^M.
\]

Define now, for all $R\geq 1$,  $A_R(\eta)=\{\zeta:\ R\le |\eta+\zeta|<2R\}$. For all $\ell \in \N$, let us denote by $R_\ell:= 2^\ell$. It then holds that
\[
   \int_{\mathbb R^d}
   \frac{\langle\xi\rangle^s}{|\xi|^2+\rho}\,
   d\mu(\xi-\eta)
   \le
   C\sum_{\ell \in \N} R_\ell^{s-2}\mu(A_{R_\ell}(\eta))
   +C{\bf 1}_{|\eta|\le 2}.
\]
If $\ell\in\N$ is such that \(R_\ell\le \langle\eta\rangle/2\), Lemma~\ref{lem:dyadic-admissible} gives
\[
   \mu(A_{R_\ell}(\eta))
   \le
   C(1+\log\langle\eta\rangle)^M
   \left(\frac{R_\ell}{\langle\eta\rangle}\right)^3 .
\]
Therefore
\[
\sum_{\ell\in \N: \; R_\ell\le \langle\eta\rangle/2}
R_\ell^{s-2}\mu(A_{R_\ell}(\eta))
\le
C(1+\log\langle\eta\rangle)^M
\langle\eta\rangle^{-3}
\sum_{\ell\in \N: \; R_\ell\le \langle\eta\rangle/2}R_\ell^{s+1}
\le
C_s\langle\eta\rangle^{s-2}
(1+\log\langle\eta\rangle)^M .
\]
If $\ell \in \N$ is such that \(R_\ell\ge \langle\eta\rangle/2\), Lemma~\ref{lem:dyadic-admissible} gives
\[
   \mu(A_{R_\ell}(\eta))
   \le C(1+\log R_\ell)^M,
\]
and since \(0\leq s<2\),
\[
\sum_{\ell \in \N: \; R_\ell\ge \langle\eta\rangle/2}
R_\ell^{s-2}\mu(A_{R_\ell}(\eta))
\le
C_s\langle\eta\rangle^{s-2}
(1+\log\langle\eta\rangle)^M .
\]
This proves the first estimate.

We now restrict the previous sums to the set of integers $\ell\in \N$ such that \(R_\ell\ge \Lambda\). For the zero-order
estimate, the preceding argument gives
\[
   \int_{|\xi|>\Lambda}
   \frac{\langle\xi\rangle^s}{|\xi|^2+\rho}\,
   d\mu(\xi-\eta)
   \le
   C_s\langle\eta\rangle^{s-2}
   (1+\log\langle\eta\rangle)^M .
\]
If \(|\eta|\ge \Lambda/2\), then, after increasing \(C_s\),
\[
   \langle\eta\rangle^{s-2}
   (1+\log\langle\eta\rangle)^M
   \le
   C_s\Lambda^{-2}(1+\log\Lambda)^M
   \langle\eta\rangle^s .
\]
Indeed, this is equivalent to
\[
   (1+\log\langle\eta\rangle)^M
   \le
   C_M(1+\log\Lambda)^M
   \left(\frac{\langle\eta\rangle}{\Lambda}\right)^2,
\]
which follows from the elementary inequality
\begin{equation}\label{eq:elementary}
   (1+\log x)^M\le C_{M,\varepsilon}x^\varepsilon,
   \qquad x\ge 1,
\end{equation}
with \(\varepsilon=2\).

For the first-order estimate, multiplying the sum by
\(\langle\eta\rangle\) gives
\[
   \int_{|\xi|>\Lambda}
   \frac{\langle\xi\rangle^s\langle\eta\rangle}{|\xi|^2+\rho}\,
   d\mu(\xi-\eta)
   \le
   C_s\langle\eta\rangle^{s-1}
   (1+\log\langle\eta\rangle)^M .
\]
Again, if \(|\eta|\ge \Lambda/2\),
\[
   \langle\eta\rangle^{s-1}
   (1+\log\langle\eta\rangle)^M
   \le
   C_s\Lambda^{-1}(1+\log\Lambda)^M
   \langle\eta\rangle^s ,
\]
because logarithmic growth is absorbed by the polynomial factor
\(\langle\eta\rangle/\Lambda\) using again~\eqref{eq:elementary} with $\varepsilon = 1$. This proves the two high-frequency
estimates.
\end{proof}

\subsection{Operators associated with the conjugated equation}

For a coefficient \(b\) whose Fourier transform is the complex Radon measure
\(\nu_b=\widehat b\), dominated by an admissible measure \(\mu_b\), it holds that, for all $f\in \mathcal S(\R^d)$, 
\[
  \widehat{bf}
  =
 (2\pi)^{-d} \nu_b\star \widehat f,
  \qquad
  \widehat{b\,\partial_j f}
  =(2\pi)^{-d} \nu_b\star(i\xi_j\widehat f).
\]
Accordingly, we formally define, for $u\in \mathcal S(\R^d)$ and $\rho>0$,
\[
  (T_{b,0}u)(\xi)
  =
  \frac{(2\pi)^{-d}}{|\xi|^2+\rho}
  \int_{\mathbb R^d}u(\eta)\,d\nu_b(\xi-\eta),
\]
and
\[
  (T_{b,1,j}u)(\xi)
  =
  \frac{(2\pi)^{-d}}{|\xi|^2+\rho}
  \int_{\mathbb R^d} i\eta_j u(\eta)\,d\nu_b(\xi-\eta).
\]
These are the Fourier representations of
\((-\Delta+\rho)^{-1}(bf)\) and
\((-\Delta+\rho)^{-1}(b\,\partial_j f)\) for $u = \widehat{f}$.

\medskip

For complete rigor, the operators are first defined by truncating the
coefficient measure. For all $R\geq 1$, let
\[
  \nu_b^R := \mathbf 1_{\{|\zeta|\leq R\}}\nu_b .
\]
and set
\[
  (T^R_{b,0}u)(\xi)
  =
  \frac{(2\pi)^{-d}}{|\xi|^2+\rho}
  \int_{\mathbb R^d} u(\eta)\,d\nu_b^R(\xi-\eta),
\]
and
\[
  (T^R_{b,1,j}u)(\xi)
  =
  \frac{(2\pi)^{-d}}{|\xi|^2+\rho}
  \int_{\mathbb R^d} i\eta_j u(\eta)\,d\nu_b^R(\xi-\eta).
\]
The estimates below are first proved for \(T^R_{b,0}\) and \(T^R_{b,1,j}\),
with constants independent of \(R\). The operators \(T_{b,0}\) and
\(T_{b,1,j}\) are then defined as the corresponding limits in the stated
target spaces. The agreement with the physical distributional operators
\[
  f\mapsto (-\Delta+\rho)^{-1}(bf),
  \qquad
  f\mapsto (-\Delta+\rho)^{-1}(b\,\partial_j f),
\]
follows by testing against Schwartz functions and passing to the vague limit
\(\nu_b^R\to\nu_b\).

\medskip

For \(t\ge 0\), we write
\[
   L^1_t
   =
   L^1(\mathbb R^d,\langle\xi\rangle^t\,d\xi),
   \qquad
   \|v\|_{L^1_t}
   =
   \int_{\mathbb R^d}\langle\xi\rangle^t|v(\xi)|\,d\xi,
\]
and
\[
   L^2_t
   =
   L^2(\mathbb R^d,\langle\xi\rangle^{2t}\,d\xi),
   \qquad
   \|v\|_{L^2_t}
   =
   \left(
   \int_{\mathbb R^d}\langle\xi\rangle^{2t}|v(\xi)|^2\,d\xi
   \right)^{1/2}.
\]
We also write $L^1 = L^1_0 = L^1(\R^d)$ and $L^2 = L^2_0  $

Also, for any $\Lambda\geq 2$, let \(P_\Lambda\) denote the multiplication operator by
\({\bf 1}_{\{|\xi|>\Lambda\}}\), and set \(Q_\Lambda=I-P_\Lambda\). Then, we have the following result.

\begin{lemma}[Operator bounds and distributional interpretation]
\label{lem:operator-bounds}
Let $\rho>0$, \(b\in L^\infty(\mathbb R^d)\), and assume that \(\widehat b\) is the
complex Radon measure \(\nu_b\), with \(|\nu_b|\le \mu_b\), where \(\mu_b\)
is admissible. Define initially
for any Schwartz function $u\in \mathcal S(\R^d)$ and for all $\xi\in \R^d$,
\[
   (T_{b,0}u)(\xi)
   =
   \frac{(2\pi)^{-d}}{|\xi|^2+\rho}
   \int_{\mathbb R^d} u(\eta)\,d\nu_b(\xi-\eta),
\]
and
\[
   (T_{b,1,j}u)(\xi)
   =
   \frac{(2\pi)^{-d}}{|\xi|^2+\rho}
   \int_{\mathbb R^d} i\eta_j u(\eta)\,d\nu_b(\xi-\eta).
\]
Let \(0\le s<2\). Then there are constants \(C_s>0\) and
\(M\in\mathbb N\), depending on \(b,s,\rho\) and on the controlling measure,
such that for all \(\Lambda\ge 2\) and for all $v\in L^1_{\rm loc}(\R^d)$ such that $P_\Lambda v\in L^1_s$,
\[
   \|P_\Lambda T_{b,0}P_\Lambda v\|_{L^1_s}
   \le
   C_s\Lambda^{-2}(1+\log\Lambda)^M
   \|P_\Lambda v\|_{L^1_s},
\]
and
\[
   \|P_\Lambda T_{b,1,j}P_\Lambda v\|_{L^1_s}
   \le
   C_s\Lambda^{-1}(1+\log\Lambda)^M
   \|P_\Lambda v\|_{L^1_s}.
\]
Moreover, for each fixed \(\Lambda\ge 2\),
\[
   P_\Lambda T_{b,0}Q_\Lambda:L^1\to L^1_s,
   \qquad
   P_\Lambda T_{b,1,j}Q_\Lambda:L^1\to L^1_s
\]
are bounded operators, with constants depending on \(\Lambda\).

The same formulae agree in \(\mathcal S'(\mathbb R^d)\) with the Fourier transform of the (physical)
distributional operators
\[
   \mathcal S'(\R^d) \ni f\longmapsto (-\Delta+\rho)^{-1}(bf)\in \mathcal S'(\R^d),
   \qquad
    \mathcal S'(\R^d) \ni  f\longmapsto (-\Delta+\rho)^{-1}(b\,\partial_j f)\in \mathcal S'(\R^d).
\]
\end{lemma}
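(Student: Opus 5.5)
The plan is to prove the high-frequency bounds by a direct Tonelli/duality computation reduced to Lemma~\ref{lem:resolvent-convolution}, and then to handle the low-frequency and distributional statements by cruder estimates. Throughout, the bounds are first established for the truncated operators \(T^R_{b,0}\), \(T^R_{b,1,j}\), with constants uniform in \(R\), since \(|\nu_b^R|\le|\nu_b|\le\mu_b\). The limits \(R\to\infty\) then exist and define \(T_{b,0}\), \(T_{b,1,j}\) on the stated spaces, by monotone or dominated convergence. For nonnegative \(|v|\), Tonelli gives
\[
\|P_\Lambda T^R_{b,0}P_\Lambda v\|_{L^1_s}
\le (2\pi)^{-d}\int_{|\eta|>\Lambda}|v(\eta)|
\left(\int_{|\xi|>\Lambda}\frac{\langle\xi\rangle^s}{|\xi|^2+\rho}\,d\mu_b(\xi-\eta)\right)d\eta .
\]
Since \(|\eta|>\Lambda\ge\Lambda/2\), the second estimate of Lemma~\ref{lem:resolvent-convolution} bounds the inner integral by \(C_s\Lambda^{-2}(1+\log\Lambda)^M\langle\eta\rangle^s\), which gives the first claim. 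For \(T_{b,1,j}\), use \(|i\eta_j|\le\langle\eta\rangle\) and the third estimate of that lemma, which yields the factor \(\Lambda^{-1}(1+\log\Lambda)^M\).

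For the mixed operators \(P_\Lambda T_{b,\cdot}Q_\Lambda\), the same Tonelli step leaves \(\eta\) in the ball \(|\eta|\le\Lambda\). There \(\langle\eta\rangle\le C\Lambda\), so the first-order weight \(|\eta_j|\) costs only a constant depending on \(\Lambda\). The first estimate of Lemma~\ref{lem:resolvent-convolution} then bounds the inner integral by \(C_s(\langle\eta\rangle^{s-2}(1+\log\langle\eta\rangle)^M+1)\), which is uniformly bounded on \(|\eta|\le\Lambda\). Therefore \(\|P_\Lambda T_{b,\cdot}Q_\Lambda v\|_{L^1_s}\le C_{\Lambda}\|v\|_{L^1}\).

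The remaining step is the identification with the physical operators, and I expect it to be the main obstacle. The difficulty is that \(b\) is only bounded, \(\nu_b\) is an infinite measure, and \(f\in\mathcal S'\) is general. Strictly, the product \(bf\) is meaningful only for \(f\) in a class where either \(b\) acts boundedly, such as \(L^2\) and \(H^1\) (this is the class used later), or \(\widehat f\in L^1\). I would proceed in three steps.

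1. For \(f\in\mathcal S\), the convolution formula \(\widehat{bf}=(2\pi)^{-d}\nu_b\star\widehat f\) holds as tempered distributions. I would verify it with the mollification argument of Lemma~\ref{lem:products}: multiply by \(\widehat\varrho(\varepsilon\cdot)\tau_R\), use Fubini on finite measures, and pass to the limit. The local finiteness of \(\mu_b\star|\widehat f|\) follows from the \(B(a)\) bound in Lemma~\ref{lem:dyadic-admissible}, which gives domination.
2. Divide by the smooth, positive, polynomially bounded symbol \(|\xi|^2+\rho\). Multiplication by \(i\xi_j\) handles \(\partial_j f\).
3. Extend to \(f\) with \(\widehat f\in L^1_{\rm loc}\) lying in the relevant weighted spaces, by density together with the uniform bounds above. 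Testing against Schwartz \(\varphi\) and using \(\nu_b^R\to\nu_b\) vaguely, with domination by \(\mu_b\), allows passage to the limit.

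I would state explicitly that the agreement is asserted on this class of \(f\) rather than on all of \(\mathcal S'\).
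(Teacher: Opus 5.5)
Your proposal is correct and follows the paper's proof essentially step by step. You use the same Tonelli reduction of the high-frequency bounds to the second and third estimates of Lemma~\ref{lem:resolvent-convolution}, uniformly in the truncation parameter $R$, and the same crude bound for $P_\Lambda T Q_\Lambda$ via $\langle\eta\rangle\le C_\Lambda$ on the support of $Q_\Lambda v$; you usefully make explicit that the first estimate of that lemma bounds the inner integral there. For the identification, the paper likewise argues only on the class you isolate, using that $b$ maps $L^2\to L^2$ and $b\,\partial_j$ maps $H^1\to L^2$ together with density of Schwartz functions. Your caveat that the $\mathcal S'$ formulation is meaningful only on such a class therefore accurately reflects what is actually proved.
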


\begin{proof}
For any Schwartz \(f\in \mathcal S(\R^d)\), it holds that
\[
   \widehat{bf}
   =
(2\pi)^{-d}\nu_b\star\widehat f
\]
in \(\mathcal S'(\mathbb R^d)\). Similarly,
\[
   \widehat{b\,\partial_j f}
   =
 (2\pi)^{-d}\nu_b \star (i\xi_j\widehat f).
\]
Multiplication by \((|\xi|^2+\rho)^{-1}\) gives exactly the two formulae
above.

Taking total variations in the truncated formula, using
\(|\nu_b^R|\leq \mu_b\), and applying Tonelli's theorem, we obtain estimates
with constants independent of \(R\). Passing to the limit \(R\to\infty\)
gives the stated bounds for \(T_{b,0}\) and \(T_{b,1,j}\), that is: there exists $C>0$ such that for all $v\in \mathcal S (\R^d)$,
\[
\begin{aligned}
   \|P_\Lambda T_{b,0}P_\Lambda v\|_{L^1_s}
   &\le
   C
   \int_{|\eta|>\Lambda}|v(\eta)|
   \int_{|\xi|>\Lambda}
   \frac{\langle\xi\rangle^s}{|\xi|^2+\rho}
   \,d\mu_b(\xi-\eta)\,d\eta  \\
   &\le
   C_s\Lambda^{-2}(1+\log\Lambda)^M
   \int_{|\eta|>\Lambda}\langle\eta\rangle^s|v(\eta)|\,d\eta,
\end{aligned}
\]
where the last step is Lemma~\ref{lem:resolvent-convolution}. This proves the
zero-order estimate. The first-order estimate is identical, using
\(|\eta_j|\le\langle\eta\rangle\) and the second high-frequency estimate of
Lemma~\ref{lem:resolvent-convolution}.

Since \(Q_\Lambda v\) is supported in \(\{\eta\in \R^d: \; |\eta|\le \Lambda\}\), then for all $\eta$ belonging to the support of \(Q_\Lambda v\), 
\(\langle\eta\rangle\le C_\Lambda\) for some constant $C_\Lambda>0$ which depends on $\Lambda$. The same Tonelli argument, now without
requiring a small high-frequency factor, gives boundedness of
\(P_\Lambda T_{b,0}Q_\Lambda\) and \(P_\Lambda T_{b,1,j}Q_\Lambda\) from
\(L^1\) to \(L^1_s\), with constants depending on \(\Lambda\).

Finally, since \(b\in L^\infty(\R^d)\), multiplication by \(b\) maps \(L^2\) to
\(L^2\), and the operator \(b\partial_j\) maps \(H^1(\R^d)\) to \(L^2\) in the
distributional sense. Applying \((-\Delta+\rho)^{-1}\), and using density of
Schwartz functions in the relevant spaces, identifies the Fourier-side
operators with the physical distributional operators in
\(\mathcal S'(\mathbb R^d)\).
\end{proof}

\subsection{Low/high-frequency Neumann argument}

\begin{proposition}[Abstract Barron regularity result]
\label{prop:abstract-bootstrap}
Let \(d\geq 1\), let \(\rho>0\), and let
\(A_1,\ldots,A_d,C\in L^\infty(\mathbb R^d)\). Assume that the Fourier
transforms of \(A_1,\ldots,A_d\) and \(C\) are complex Radon measures whose
total variations are dominated by admissible Fourier-control measures.
Let \(\phi\in H^1(\mathbb R^d)\) solve, in \(\mathcal D'(\mathbb R^d)\),
\[
  (-\Delta+\rho)\phi
  =
  2\sum_{j=1}^d A_j\partial_j\phi + C\phi .
\]
Then
\[
  \phi\in \Bsp^s(\mathbb R^d)
  \qquad\text{for every }0\leq s<2.
\]
\end{proposition}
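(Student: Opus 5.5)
Set $u=\widehat\phi$. Since $\phi\in H^1(\mathbb R^d)$, $u\in L^2_1$. Taking Fourier transforms of the equation and using the identification in Lemma~\ref{lem:operator-bounds}, we obtain the fixed-point identity
\[
  u = \mathcal T u,\qquad
  \mathcal T := 2\sum_{j=1}^d T_{A_j,1,j} + T_{C,0},
\]
valid in $\mathcal S'(\mathbb R^d)$, since $A_j\partial_j\phi$ and $C\phi$ lie in $L^2$. Fix $0\le s<2$ and a large cut-off $\Lambda\ge 2$. We split $u=Q_\Lambda u+P_\Lambda u$.

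The low-frequency part is immediate: by Cauchy--Schwarz on the ball $\{|\xi|\le\Lambda\}$,
\[
\|Q_\Lambda u\|_{L^1_s}\le C_\Lambda\|u\|_{L^2},
\qquad \|Q_\Lambda u\|_{L^1}<\infty .
\]
Projecting the identity with $P_\Lambda$ then gives
\[
  P_\Lambda u = P_\Lambda\mathcal T P_\Lambda (P_\Lambda u) + g,\qquad g:=P_\Lambda\mathcal T Q_\Lambda u .
\]
By the second part of Lemma~\ref{lem:operator-bounds}, $g\in L^1_s$. By the first part, $\|P_\Lambda\mathcal TP_\Lambda\|_{L^1_s\to L^1_s}\le C\Lambda^{-1}(1+\log\Lambda)^M$, which is at most $1/2$ once $\Lambda$ is large. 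Hence $w:=\sum_{n\ge0}(P_\Lambda\mathcal TP_\Lambda)^n g$ converges in $P_\Lambda L^1_s$ and is the unique solution there of $w=P_\Lambda\mathcal TP_\Lambda w+g$.

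\textbf{Main obstacle.} It remains to show $P_\Lambda u=w$, and this is the step I expect to be delicate: a priori $P_\Lambda u$ lies only in $L^2_1$, not in $L^1_s$, so uniqueness in $L^1_s$ does not apply directly. I would try the following. Run the same contraction in a space containing $P_\Lambda u$, for instance the weighted $L^1$ space $L^1_{-\sigma}$ with $\sigma>d/2-1$. This space contains $L^2_1$, since $\langle\xi\rangle^{-\sigma-1}\in L^2$. In that space one needs a version of Lemma~\ref{lem:resolvent-convolution} with negative weights $\langle\xi\rangle^{-\sigma}$. The dyadic annular bounds of Lemma~\ref{lem:dyadic-admissible} should still give the smallness factor $\Lambda^{-1}(1+\log\Lambda)^M$ for the restricted operator, since the argument only compares $\langle\xi\rangle^t$ with $\langle\eta\rangle^t$ over the annuli, and this also works for $t<0$ with $|t|$ moderate.

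With that, both $P_\Lambda u$ and $w$ (note $w\in L^1_s\subset L^1_{-\sigma}$) solve the same equation with the same $g$ in $L^1_{-\sigma}$, where the operator is a contraction. Hence $P_\Lambda u=w\in L^1_s$.

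If negative weights cause trouble for large $d$, I would instead interpolate. Start from an $L^2$-based a priori bound, namely $u\in L^2_1$, and bootstrap through $L^2_t$ Sobolev-type resolvent bounds until $u\in L^2_t$ with $t>d/2$, so that $u\in L^1$. Then apply the $L^1_s$ uniqueness directly.

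Combining the two pieces gives $\|u\|_{L^1_s}\le\|Q_\Lambda u\|_{L^1_s}+\|w\|_{L^1_s}<\infty$, that is, $\phi\in\Bsp^s(\mathbb R^d)$ for every $0\le s<2$.
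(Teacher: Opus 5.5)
\textbf{There is a genuine gap at the identification step.} You correctly isolate the one delicate point, namely showing $P_\Lambda u=w$, but neither of your remedies works in the dimensions that matter ($d=3N$ with $N\ge 2$).

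\emph{The weighted space $L^1_{-\sigma}$ fails once $\sigma>2$.} The annular bound of Lemma~\ref{lem:dyadic-admissible} only provides the cubic factor $(R/\langle\eta\rangle)^3$, and this is sharp for a three-dimensional angular block. Take $A_1(x)=\chi(|x^1|)x^1_1/|x^1|$, whose Fourier transform has size $\asymp|k^1|^{-3}$ along the $k^1_1$-axis, and take $\eta=(Ke_1,0)$ with $K\gg\Lambda$. The part of $P_\Lambda T_{A_1,1,1}P_\Lambda$ landing in $\{\Lambda<|\xi|<2\Lambda\}$ contributes roughly $K^{\sigma}\cdot\Lambda^{3}\cdot\Lambda^{-\sigma-2}\cdot K\cdot K^{-3}=\Lambda^{1-\sigma}K^{\sigma-2}$ to the $L^1_{-\sigma}$ operator norm. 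This is unbounded in $K$ as soon as $\sigma>2$. Your embedding condition $\sigma>d/2-1$ is compatible with $\sigma\le 2$ only when $d\le 5$, so this route covers $N=1$ and nothing more.

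\emph{The $L^2$-Sobolev fallback fails because its target is false in general.} The claim $u\in L^2_t$ with $t>d/2$ does not hold. The function $\phi(x)=\phi_2(x^1)\prod_{j\ge2}\phi_1(x^j)$, built from Section~\ref{sec:3}, solves an equation of exactly the required form (sum the one-electron conjugated equations). However, $|\widehat{\phi_2}(\xi)|\gtrsim|\xi|^{-5}$ on a cone, which gives $\phi\notin H^{7/2}(\R^{3N})$. Since $3N/2>7/2$ for $N\ge 3$, no bootstrap in $L^2_t$ can reach $L^1$.

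\textbf{The missing idea.} The operator $S_\Lambda=P_\Lambda\mathcal T P_\Lambda$ is already a contraction on $P_\Lambda L^2_1$, the space where $P_\Lambda u$ lives a priori. This uses no Fourier control at all, only $A_j,C\in L^\infty$. By Plancherel,
$\|bf\|_{L^2}\le\|b\|_{L^\infty}\|f\|_{L^2}$ and $\|b\,\partial_j f\|_{L^2}\le\|b\|_{L^\infty}\|\partial_j f\|_{L^2}$.
Moreover $\langle\xi\rangle(|\xi|^2+\rho)^{-1}\le C\Lambda^{-1}$ on $\{|\xi|>\Lambda\}$. Together these give $\|S_\Lambda v\|_{L^2_1}\le C\Lambda^{-1}\|v\|_{L^2_1}$.

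\textbf{How to close the argument.} Note that $g=P_\Lambda\mathcal T Q_\Lambda u$ also lies in $L^2_1$: the right-hand side generated by $\mathcal F^{-1}Q_\Lambda u\in H^1$ is in $L^2$, so its resolvent image is in $H^2$. You should therefore run your Neumann series in $X_\Lambda=P_\Lambda(L^2_1\cap L^1_s)$ with the sum norm, so that $w\in L^2_1$ as well. Then $z=P_\Lambda u-w\in P_\Lambda L^2_1$ satisfies $z=S_\Lambda z$, and the $L^2_1$ contraction forces $z=0$, i.e.\ $P_\Lambda u=w\in L^1_s$. This is exactly how the paper closes the argument. The admissible Fourier-control hypotheses are needed only for the $L^1_s$ half of the contraction.
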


\begin{proof}
Let $u=\widehat\phi$ and let $0\leq s < 2$. Our aim is to prove that $u\in L^1_s$ which will give the desired result. 

For \(\Lambda\geq 2\), we set
\[
  X_\Lambda
  :=
  \left\{
    v\in L^2_1\cap L^1_s
    \;:\;
    \operatorname{Supp}\; v\subset\{\xi\in\mathbb R^d:\ |\xi|>\Lambda\}
  \right\},
\]
equipped with the norm
\[
  \|v\|_{X_\Lambda}
  :=
  \|v\|_{L^2_1}+\|v\|_{L^1_s}.
\]
Equivalently, $X_\Lambda=P_\Lambda(L^2_1\cap L^1_s)$. All functions in \(X_\Lambda\) are Fourier-side
functions supported in the high-frequency region \(\{\xi\in \R^d: \; |\xi|>\Lambda\}\).

\medskip

The Fourier form of the conjugated equation is
\[
   u=Tu,
\]
where
\[
   T
   =
   T_{C,0}
   +
   2\sum_{j=1}^d T_{A_j,1,j}.
\]

\medskip

Since \(\phi\in H^1(\mathbb R^d)\), its Fourier transform
$u=\widehat\phi$ belongs to \(L^2_1\). Hence, for each fixed \(\Lambda\geq2\),
\[
Q_\Lambda u\in L^1_t
\qquad\text{for every }t\geq0,
\]
because \(Q_\Lambda u\) is supported in the bounded ball \(\{\xi\in \R^d: \; |\xi|\leq\Lambda\}\) and
Cauchy's inequality gives
\[
\|Q_\Lambda u\|_{L^1_t}
\leq
\left(
\int_{|\xi|\leq\Lambda}
\langle\xi\rangle^{2t-2}\,d\xi
\right)^{1/2}
\|u\|_{L^2_1}.
\]

For any
\(v\in X_\Lambda\), we write $f=\mathcal F^{-1}v$. Let \(b\in L^\infty(\mathbb R^d)\). For all $\xi \in \R^d$ such that $|\xi|>\Lambda$, it holds that
\[
\frac{\langle\xi\rangle}{|\xi|^2+\rho}
\leq
C\Lambda^{-1}.
\]
Therefore,
\[
\|P_\Lambda T_{b,0}v\|_{L^2_1}
=
\left\|
P_\Lambda\mathcal F\bigl((-\Delta+\rho)^{-1}(bf)\bigr)
\right\|_{L^2_1}
\leq
C\Lambda^{-1}\|b\|_{L^\infty}\|f\|_{L^2}
=
C\Lambda^{-1}\|b\|_{L^\infty}\|v\|_{L^2}.
\]
Similarly,
\[
\|P_\Lambda T_{b,1,j}v\|_{L^2_1}
=
\left\|
P_\Lambda\mathcal F\bigl((-\Delta+\rho)^{-1}(b\partial_j f)\bigr)
\right\|_{L^2_1}
\leq
C\Lambda^{-1}\|b\|_{L^\infty}\|\partial_j f\|_{L^2}
\leq
C\Lambda^{-1}\|b\|_{L^\infty}\|v\|_{L^2_1}.
\]
Consequently,
\[
\|P_\Lambda T P_\Lambda v\|_{L^2_1}
\leq
C\Lambda^{-1}\|P_\Lambda v\|_{L^2_1}.
\]

Next, using Lemma~\ref{lem:operator-bounds}, we obtain:
\[
   \|P_\Lambda T P_\Lambda v\|_{L^1_s}
   \le
   C_s\Lambda^{-1}(1+\log\Lambda)^M\|v\|_{L^1_s}.
\]
Combining the \(L^2_1\)- and \(L_s^1\)-estimates, we obtain
\[
  \|P_\Lambda T P_\Lambda v\|_{X_\Lambda}
  \leq
  C_s\Lambda^{-1}(1+\log\Lambda)^M\|v\|_{X_\Lambda}.
\]
Choosing \(\Lambda\) sufficiently large, we may assume that
\[
S_\Lambda := P_\Lambda T P_\Lambda
\]
is a contraction on \(X_\Lambda\). More precisely, choosing $\Lambda$ sufficiently large, there exists
\(0<\theta<1\) such that
\begin{equation}\label{eq:4.19b}
\|S_\Lambda v\|_{L^2_1}
\le
\theta_2 \|v\|_{L^2_1},
\qquad
\forall v\in P_\Lambda L^2_1,
\end{equation}
\begin{equation}\label{eq:4.19c}
\|S_\Lambda v\|_{L^1_s}
\le
\theta \|v\|_{L^1_s},
\qquad
\forall v\in P_\Lambda L^1_s,
\end{equation}
and
\begin{equation}\label{eq:4.19a}
\|S_\Lambda v\|_{X_\Lambda}
\le
\theta_X \|v\|_{X_\Lambda},
\qquad \forall v\in X_\Lambda.
\end{equation}

Set
\[
g_\Lambda := P_\Lambda T Q_\Lambda u .
\]
As observed above, $g_\Lambda \in X_\Lambda$. Hence the Neumann series
\[
w
=
\sum_{n=0}^\infty S_\Lambda^n g_\Lambda
\]
converges in \(X_\Lambda\), and its sum is the unique element of \(X_\Lambda\)
satisfying
\begin{equation}\label{eq:4.19d}
w = S_\Lambda w + g_\Lambda .
\end{equation}

We now identify this abstract fixed point with the true high-frequency part of \(u\).
Since \(u\) solves \(u=Tu\) in \(\mathcal S'(\mathbb R^d)\), we have
\begin{equation}\label{eq:4.19e}
P_\Lambda u
=
S_\Lambda P_\Lambda u + g_\Lambda
\qquad\text{in } \mathcal S'(\mathbb R^d).
\end{equation}
At this stage \(P_\Lambda u\) is known a priori only to belong to \(L^2_1\), not to
\(L^1_s\). Therefore the comparison with \(w\) is made in the \(L^2_1\)-topology.
Indeed, since \(w\in X_\Lambda\subset L^2_1\), the difference
\[
z := P_\Lambda u - w
\]
belongs to \(P_\Lambda L^2_1\), and \eqref{eq:4.19d}--\eqref{eq:4.19e} imply
\[
z = S_\Lambda z
\qquad\text{in } \mathcal S'(\mathbb R^d).
\]
Both sides belong to \(L^2_1\), hence the equality holds in \(L^2_1\). Using
\eqref{eq:4.19b}, we obtain
\[
\|z\|_{L^2_1}
=
\|S_\Lambda z\|_{L^2_1}
\le
\theta \|z\|_{L^2_1}.
\]
Since \(\theta<1\), it follows that \(z=0\). Thus
\[
P_\Lambda u = w \in X_\Lambda \subset L^1_s .
\]
Together with the low-frequency estimate \(Q_\Lambda u\in L^1_s\), this proves
\[
u\in L^1_s,
\]
or equivalently
\[
\phi\in \Bsp^s(\mathbb R^d).
\]
This proves Proposition~\ref{prop:abstract-bootstrap}.
\end{proof}

We are now in a position to prove Theorem~\ref{thm} which is a direct corollary of the previous results.

\begin{proof}[Proof of Theorem~\ref{thm}]
Theorem~\ref{thm} follows by
applying Proposition~\ref{prop:abstract-bootstrap} with \(d=3N\), \(A=\nabla F_{\rm cut}\), and
\(C=\Delta F_{\rm cut}+|\nabla F_{\rm cut}|^2-V+E+\rho\), using
Proposition~\ref{prop:coefficients}.
\end{proof}

\section{Conclusion}

The cut-off Jastrow factor improves the global spectral Barron regularity of Coulombic electronic eigenfunctions.  For the unregularized wave function, the Coulomb kernel has Fourier behaviour $|k|^{-2}$ in each three-dimensional collision variable, which leads to the sharp threshold $s<1$.  After conjugation by the cut-off Jastrow factor, the singular first-order coefficients are angular and have Fourier behaviour $|k|^{-3}$.  The resolvent of $-\Delta+\rho$ then provides the additional smoothing needed to reach every $s<2$.

The model hydrogen-like state shows that $s=2$ is a natural endpoint obstruction in general, although the many-body theorem itself only asserts membership for $s<2$: the residual singularity $x_j|x|$ has Fourier decay $|\xi|^{-5}$ in dimension three, giving a logarithmic divergence in the $\Bsp^2$ norm. Thus the result \(\phi\in B^s_{\rm sp}\) for all \(s<2\) is consistent
with the model obstruction at the endpoint \(s=2\), and should be understood as the
natural global regularity statement for the Jastrow-regularized factor.

\section{Appendix}

\subsection{Proof of Lemma~\ref{lem:aux}}

\begin{proof}[Proof of Lemma~\ref{lem:aux}]
Let $R\geq 1$ and $A\geq 1$. For all $\ell \in \N$, we write $S_\ell = 2^\ell$,
\[
  L_{R,S_\ell}:=1+\log(2+R+S_\ell),\qquad
  L_{S_\ell,A}:=1+\log(2+S_\ell+A),
\]
and
\[
  L_{R,A}:=1+\log(2+R+A).
\]
Throughout the proof, the constants may depend on $M_1$ and $M_2$, but never
on $R$, $A$, or $\ell$ (i.e. $S$).

\medskip

The left-hand side of \eqref{eq:dyadic-ineq} then reads as
\[
  \sum_{\ell \in \N} L_{R,S_\ell}^{M_1}L_{S_\ell,A}^{M_2}
  \alpha(R,S_\ell)\alpha(S_\ell,A).
\]
We distinguish two cases.

\medskip
\noindent\textbf{Case 1: $A\le 2R$.}
In this case $R/A\ge 1/2$, and therefore
\[
  \alpha(R,A)=\min\left\{1,\left(\frac{R}{A}\right)^3\right\}\ge 2^{-3}.
\]
It is therefore enough to prove
\[
  \sum_{\ell \in \N} L_{R,S_\ell}^{M_1}L_{S_\ell,A}^{M_2}
  \alpha(R,S_\ell)\alpha(S_\ell,A)
  \lesssim L_{R,A}^{M_1+M_2+1}.
\]
We split the sum into the two sets of indices $\ell \in \N$ such that $S_\ell\le 2R$ and $S_\ell>2R$.

First assume $S_\ell\le 2R$. Since we have
\[
  \alpha(R,S_\ell)\alpha(S_\ell,A)\le 1,
\]
and
\[
  2+R+S_\ell\le 2+3R,
\]
and, since $A\le 2R$ and $R\ge 1$,
\[
  2+3R\le 3(2+R+A), 
\]
we obtain
\[
  L_{R,S_\ell}\le 1+\log\bigl(3(2+R+A)\bigr)
  \le C L_{R,A}.
\]
Similarly,
\[
  2+S_\ell+A\le 2+2R+A\le 2+4R\le 4(2+R+A),
\]
and hence
\[
  L_{S_\ell,A}\le C L_{R,A}.
\]
Thus, for $S_\ell\le 2R$,
\[
  L_{R,S_\ell}^{M_1}L_{S_\ell,A}^{M_2}\le C L_{R,A}^{M_1+M_2}.
\]
The number of integers $\ell \in \N$ such that $S_\ell\in[1,2R]$ is bounded by
\[
  C(1+\log(2+R)).
\]
Since $1+\log(2+R)\le L_{R,A}$, we get
\begin{align*}
  \sum_{\ell \in \N: \; S_\ell\le 2R}
  L_{R,S_\ell}^{M_1}L_{S_\ell,A}^{M_2}
  \alpha(R,S_\ell)\alpha(S_\ell,A)
  &\le C L_{R,A}^{M_1+M_2}\bigl(1+\log(2+R)\bigr) \\
  &\le C L_{R,A}^{M_1+M_2+1}.
\end{align*}

Now assume $S_\ell>2R$. Then
\[
  \alpha(R,S_\ell)=\left(\frac{R}{S_\ell}\right)^3,
  \qquad
  \alpha(S_\ell,A)\le 1.
\]
Because $A\le 2R<S_\ell$, we have
\[
  2+R+S_\ell\le 2+2S_\ell,
  \qquad
  2+S_\ell+A\le 2+2S_\ell,
\]
and therefore
\[
  L_{R,S_\ell}^{M_1}L_{S_\ell,A}^{M_2}
  \le C\bigl(1+\log(2+S_\ell)\bigr)^{M_1+M_2}.
\]
Consequently,
\begin{align*}
&\sum_{\ell\in \N: \; S_\ell>2R}
  L_{R,S_\ell}^{M_1}L_{S_\ell,A}^{M_2}
  \alpha(R,S_\ell)\alpha(S_\ell,A) \\
&\hspace{2cm}\le
  C\sum_{\ell\in \N: \; S_\ell>2R}
  \left(\frac{R}{S_\ell}\right)^3
  \bigl(1+\log(2+S_\ell)\bigr)^{M_1+M_2}.
\end{align*}
Let $\ell_0\in \N$ be the lowest integer such that $S_{\ell_0}$ is strictly larger than $2R$. Then
$2R<S_{\ell_0}\le 4R$, and for all $\ell \geq \ell_0$, there exists $n\in \N$ such that $S_\ell=2^nS_{\ell_0}$. Hence
\[
  \left(\frac{R}{S_\ell}\right)^3
  =\left(\frac{R}{S_{\ell_0}}\right)^3 2^{-3n}\le C2^{-3n}.
\]
Also,
\[
  1+\log(2+2^nS_{\ell_0})
  \le C\bigl(1+\log(2+R)+n\bigr).
\]
Using the elementary bound
\[
  \sum_{n\ge 0}2^{-3n}(B+n)^M\le C_M B^M,
  \qquad B\ge 1,
\]
with $B=1+\log(2+R)$ and $M=M_1+M_2$, we obtain
\[
  \sum_{\ell \in \N: \; S_\ell>2R}
  \left(\frac{R}{S_\ell}\right)^3
  \bigl(1+\log(2+S_\ell)\bigr)^{M_1+M_2}
  \le C\bigl(1+\log(2+R)\bigr)^{M_1+M_2}.
\]
Since $1+\log(2+R)\le L_{R,A}$, this contribution is bounded by
$C L_{R,A}^{M_1+M_2}$, and therefore by
$C L_{R,A}^{M_1+M_2+1}$. This concludes the proof in the case $A\le 2R$.

\medskip
\noindent\textbf{Case 2: $A>2R$.}
In this case
\[
  \alpha(R,A)=\left(\frac{R}{A}\right)^3.
\]
We split the sum over $\ell \in \N$ into three sets of indices $\ell$ such that
\[
  S_\ell\le R,
  \qquad R<S_\ell\le A,
  \qquad S_\ell>A.
\]

First assume $S_\ell\le R$. Then
\[
  \alpha(R,S_\ell)=1,
  \qquad
  \alpha(S_\ell,A)=\left(\frac{S_\ell}{A}\right)^3,
\]
and therefore
\[
\alpha(R,S_\ell)\alpha(S_\ell,A)=\left(\frac{S_\ell}{A}\right)^3.
\]
Moreover $S_\ell\le R<A$, so
\[
  2+R+S_\ell\le 2+2R\le 2+R+A,
\]
and
\[
  2+S_\ell+A\le 2+R+A.
\]
Hence
\[
  L_{R,S_\ell}^{M_1}L_{S_\ell,A}^{M_2}\le L_{R,A}^{M_1+M_2}.
\]
Thus
\begin{align*}
&\sum_{\ell\in \N: \; S_\ell\le R}
  L_{R,S_\ell}^{M_1}L_{S_\ell,A}^{M_2}
  \alpha(R,S_\ell)\alpha(S_\ell,A) \\
&\hspace{2cm}\le
  L_{R,A}^{M_1+M_2}A^{-3}
  \sum_{\ell \in \N: \; S_\ell\le R} S_\ell^3.
\end{align*}
The geometric sum satisfies
\[
  \sum_{\ell\in \N: \; S_\ell\le R} S_\ell^3\le C R^3.
\]
Therefore
\[
  \sum_{\ell\in \N: \; S_\ell\le R}
  L_{R,S_\ell}^{M_1}L_{S_\ell,A}^{M_2}
  \alpha(R,S_\ell)\alpha(S_\ell,A)
  \le C L_{R,A}^{M_1+M_2}\left(\frac{R}{A}\right)^3.
\]
This is bounded by the desired right-hand side.

Next assume $R<S_\ell\le A$. Then
\[
  \alpha(R,S_\ell)=\left(\frac{R}{S_\ell}\right)^3,
  \qquad
  \alpha(S_\ell,A)=\left(\frac{S_\ell}{A}\right)^3.
\]
Thus the product is independent of $S_\ell$:
\[
  \alpha(R,S_\ell)\alpha(S_\ell,A)=\left(\frac{R}{A}\right)^3.
\]
Also,
\[
  2+R+S_\ell\le 2+R+A,
\]
and
\[
  2+S_\ell+A\le 2+2A\le 2(2+R+A),
\]
so
\[
  L_{R,S_\ell}^{M_1}L_{S_\ell,A}^{M_2}\le C L_{R,A}^{M_1+M_2}.
\]
It remains to count the number of integers $\ell \in \N$ satisfying $R<S_\ell\le A$. This number is at
most
\[
  C\bigl(1+\log(A/R)\bigr).
\]
Since $R\ge 1$, we have $A/R\le A$, and hence
\[
  1+\log(A/R)\le 1+\log A\le C L_{R,A}.
\]
Therefore
\begin{align*}
&\sum_{\ell\in \N: \; R<S_\ell\le A}
  L_{R,S_\ell}^{M_1}L_{S_\ell,A}^{M_2}
  \alpha(R,S_\ell)\alpha(S_\ell,A) \\
&\hspace{2cm}\le
  C L_{R,A}^{M_1+M_2}
  \left(\frac{R}{A}\right)^3
  \bigl(1+\log(A/R)\bigr) \\
&\hspace{2cm}\le
  C L_{R,A}^{M_1+M_2+1}
  \left(\frac{R}{A}\right)^3.
\end{align*}
This is exactly the desired bound in the intermediate region.

Finally assume that $\ell\in \N$ is such that $S_\ell>A$. Then $S_\ell>A>R$, and hence
\[
  \alpha(R,S_\ell)=\left(\frac{R}{S_\ell}\right)^3,
  \qquad
  \alpha(S_\ell,A)=1.
\]
Moreover,
\[
  2+R+S_\ell\le 2+2S_\ell,
  \qquad
  2+S_\ell+A\le 2+2S_\ell,
\]
so
\[
  L_{R,S_\ell}^{M_1}L_{S_\ell,A}^{M_2}
  \le C\bigl(1+\log(2+S_\ell)\bigr)^{M_1+M_2}.
\]
Thus
\begin{align*}
&\sum_{\ell\in \N: \; S_\ell>A}
  L_{R,S_\ell}^{M_1}L_{S_\ell,A}^{M_2}
  \alpha(R,S_\ell)\alpha(S_\ell,A) \\
&\hspace{2cm}\le
  C R^3
  \sum_{\ell\in \N: \; S_\ell}
S_\ell^{-3}\bigl(1+\log(2+S_\ell)\bigr)^{M_1+M_2}.
\end{align*}
Let $\ell_0$ be the lowest integer such that $S_{\ell_0}$ is strictly larger than $A$. Then
$A<S_{\ell_0}\le 2A$, and every dyadic $S_\ell>A$ can be written as $S_\ell=2^nS_{\ell_0}$ for some $n\in \N$.
Hence
\[
  S_\ell^{-3}=2^{-3n}S_{\ell_0}^{-3}\le C2^{-3n}A^{-3},
\]
and
\[
  1+\log(2+2^nS_{\ell_0})
  \le C\bigl(1+\log(2+A)+n\bigr).
\]
Using again
\[
  \sum_{n\in \N}2^{-3n}(B+n)^M\le C_M B^M,
  \qquad B\ge 1,
\]
we get
\[
  \sum_{\ell\in \N: \; S_\ell>A}
S_\ell^{-3}\bigl(1+\log(2+S_\ell)\bigr)^{M_1+M_2}
  \le C A^{-3}\bigl(1+\log(2+A)\bigr)^{M_1+M_2}.
\]
Consequently,
\[
  \sum_{\ell \in\N: \; S_\ell>A}
  L_{R,S_\ell}^{M_1}L_{S_\ell,A}^{M_2}
  \alpha(R,S_\ell)\alpha(S_\ell,A)
  \le C\left(\frac{R}{A}\right)^3
  \bigl(1+\log(2+A)\bigr)^{M_1+M_2}.
\]
Since
\[
  1+\log(2+A)\le L_{R,A},
\]
this is bounded by
\[
  C L_{R,A}^{M_1+M_2}
  \left(\frac{R}{A}\right)^3,
\]
and hence by the desired right-hand side.

Combining the three regions $S_\ell\le R$, $R<S_\ell\le A$, and $S_\ell>A$ proves
\eqref{eq:dyadic-ineq} when $A>2R$. Together with the first case, the lemma follows.
\end{proof}

\subsection{Proof of Lemma~\ref{lem:distrib}}

\begin{proof}[Proof of Lemma~\ref{lem:distrib}]
Away from the origin the computation is classical. If
\[
  F(r)=\chi(r)r,
\]
then, for \(r>0\),
\[
  F'(r)=\chi'(r)r+\chi(r),
  \qquad
  F''(r)=\chi''(r)r+2\chi'(r).
\]
Hence, for \(y\neq0\),
\[
  \nabla_y f(y)=F'(r)\frac{y}{r}
  =
  \chi(r)\frac{y}{r}+\chi'(r)y,
\]
and, using the radial Laplacian in dimension three,
\[
\begin{aligned}
  \Delta_y f(y)
  &=
  F''(r)+\frac2rF'(r)  \\
  &=
  r\chi''(r)+2\chi'(r)
  +
  \frac2r\bigl(r\chi'(r)+\chi(r)\bigr) \\
  &=
  \frac{2\chi(r)}{r}+4\chi'(r)+r\chi''(r).
\end{aligned}
\]
It remains to justify that this identity holds in the sense of distributions
on all of \(\mathbb R^3\), without an additional term supported at the origin.

Let \(\varphi\in C^\infty_c(\mathbb R^3)\). Choose \(\varepsilon>0\) small
enough so that \(\chi(r)=1\) for \(0\le r\le 2\varepsilon\). We apply Green's
formula on
\[
  \Omega_\varepsilon=\{y\in\mathbb R^3:\ |y|>\varepsilon\}.
\]
Since \(\varphi\) is compactly supported, there is no boundary contribution at
infinity. Thus
\[
  \int_{\Omega_\varepsilon} f\,\Delta\varphi\,dy
  =
  \int_{\Omega_\varepsilon} (\Delta f)_{\mathrm{cl}}\varphi\,dy
  +
  \int_{|y|=\varepsilon}
  \bigl(
    f\,\partial_n\varphi-\varphi\,\partial_n f
  \bigr)\,dS,
\]
where \((\Delta f)_{\mathrm{cl}}\) denotes the classical Laplacian away from
the origin, and \(n\) is the outward normal to \(\Omega_\varepsilon\). On the
inner boundary \(|y|=\varepsilon\), this outward normal is
\[
  n=-\frac{y}{|y|}.
\]
Since \(\chi=1\) near the origin, one has \(f(y)=|y|=\varepsilon\) and
\(\nabla f(y)=y/|y|\) on \(|y|=\varepsilon\). Hence
\[
  \partial_n f
  =
  \nabla f\cdot n
  =
  -1,
  \qquad
  \partial_n\varphi
  =
  -\partial_r\varphi .
\]
Therefore the boundary term is
\[
\begin{aligned}
  \int_{|y|=\varepsilon}
  \bigl(
    f\,\partial_n\varphi-\varphi\,\partial_n f
  \bigr)\,dS
  &=
  \int_{|y|=\varepsilon}
  \bigl(
    -\varepsilon\,\partial_r\varphi+\varphi
  \bigr)\,dS .
\end{aligned}
\]
This term tends to zero as \(\varepsilon\to0\). Indeed,
\[
  \left|
  \int_{|y|=\varepsilon}
  \varepsilon\,\partial_r\varphi\,dS
  \right|
  \le
  4\pi\varepsilon^3\|\nabla\varphi\|_{L^\infty},
\]
and
\[
  \left|
  \int_{|y|=\varepsilon}
  \varphi\,dS
  \right|
  \le
  4\pi\varepsilon^2\|\varphi\|_{L^\infty}.
\]
Consequently,
\[
  \langle \Delta f,\varphi\rangle
  =
  \lim_{\varepsilon\to0}
  \int_{|y|>\varepsilon}
  (\Delta f)_{\mathrm{cl}}(y)\varphi(y)\,dy.
\]
Since
\[
  (\Delta f)_{\mathrm{cl}}(y)
  =
  \frac{2\chi(r)}{r}+4\chi'(r)+r\chi''(r),
  \qquad r=|y|,
\]
and since the only singular term near the origin is \(2/r\), which belongs to
\(L^1_{\mathrm{loc}}(\mathbb R^3)\), we may pass to the limit and obtain
\[
  \langle \Delta f,\varphi\rangle
  =
  \int_{\mathbb R^3}
  \left(
    \frac{2\chi(r)}{r}+4\chi'(r)+r\chi''(r)
  \right)
  \varphi(y)\,dy.
\]
This proves the distributional Laplacian formula.

The formula for the distributional gradient follows similarly, or directly
from the fact that \(f\in W^{1,\infty}_{\mathrm{loc}}(\mathbb R^3)\) and the
classical gradient formula holds almost everywhere away from the origin. The
possible singularity at the single point \(y=0\) is irrelevant for the weak
gradient.

From the approximation viewpoint, the result identifies the Jastrow
quotient, rather than the original electronic wave function, as the
natural object carrying higher Fourier \(L^1\)-type regularity. This is
consistent with explicitly correlated and transcorrelated approaches:
the explicit cusp factor removes the leading Coulomb obstruction, while
the remaining factor has the spectral Barron regularity expected to be
more favourable for dimension-robust Fourier-feature and neural-network
approximation schemes.
\end{proof}

\paragraph{Acknowledgements} The author acknowledges the financial support of European Research Council (ERC) under the European Union’s
Horizon 2020 Research and Innovation Programme – Grant Agreement n°101077204 HighLEAP. ChatGPT 5.4 Pro was used for some parts of the proof.

\end{document}